\newtheorem{theorem}{Theorem}
\newtheorem{lemma}[theorem]{Lemma}
\theoremstyle{definition}
\newtheorem{definition}[theorem]{Definition}
\newtheorem{observation}[theorem]{Observation}
\newtheorem*{thmtightupper}{Theorem \ref{thm:super-tight-upper}}	
\newtheorem*{thmdigraphlower}{Theorem \ref{thm:digraph-lower}}	
\newtheorem*{thmincubator}{Theorem \ref{thm:incubator}}
\newtheorem*{thmsparseincubator}{Theorem \ref{thm:sparse-incubator}}
\newtheorem*{thmsparseUB}{Theorem \ref{thm:sparseUB}}
\newcommand{\Zzero}{\mathbb{Z}_{\geq 0}}  
\newcommand{\Zone}{\mathbb{Z}_{\geq 1}}
\newcommand{\pr}{\mathbb{P}}
\newcommand{\E}{\mathbb{E}}
\newcommand{\floor}[1]{\lfloor #1 \rfloor}
\newcommand{\ceil}[1]{\lceil #1 \rceil}
\let\epsilon=\varepsilon
\newcommand{\cc}{c}
\newcommand{\dout}{d_\textnormal{out}}
\newcommand{\din}{d_\textnormal{in}}
\newcommand{\Nout}{N_\textnormal{out}}
\newcommand{\Nin}{N_\textnormal{in}}
\newcommand{\Tend}{T_{\mathsf{end}}}
\newcommand{\Ti}[1]{T_{#1}}
\newcommand{\dd}{b}
\newcommand{\bb}{\dd}
\newcommand{\Tp}{T^+}
\newcommand{\Tpp}{T^{++}}
\newcommand{\Itp}[1]{\tau_{#1}}
\newcommand{\filt}{\mathcal{F}}
\newcommand{\tp}{t^+}
\newcommand{\Mp}{M^+}
\newcommand{\Mpp}{M^{++}}
\newcommand{\fail}{\mathsf{F}}
\newcommand{\yy}{\gamma}
\newcommand{\YY}{Y}
\newcommand{\stateY}{\mathcal{S}_Y}
\newcommand{\ZZ}{Z}
\newcommand{\zz}{z}	
\newcommand{\stateZ}{\mathcal{S}_Z}
\newcommand{\TZ}{T^Z}
\newcommand{\TY}{T^Y}
\newcommand{\down}{\mathsf{down}}
\newcommand{\up}{\mathsf{up}}	
\newcommand{\calE}{\mathcal{E}}
\newcommand{\ZTi}[1]{\mathcal{T}_{#1}}
\newcommand{\ZTend}{\mathcal{T}_{\mathsf{end}}}
\newcommand{\TT}{T}
\let\phi=\varphi
\begin{document}
\title{Asymptotically Optimal Amplifiers for the Moran Process\thanks{\textbf{Keywords:} strong amplifiers, Moran process, fixation probability, extremal graph theory, Markov chains.}}

\author{Leslie Ann Goldberg\thanks{University of Oxford, UK.
The research leading to these results has received funding from 
the European Research Council under the European Union's Seventh Framework Programme (FP7/2007-2013) ERC grant agreement no.\ 334828. The paper 
reflects only the authors' views and not the views of the ERC or the European Commission. The European Union is not liable for any use that may be made of the information contained therein.},  John Lapinskas\footnotemark[2], Johannes Lengler\thanks{ETH
Z\"urich, Switzerland.}, Florian Meier\footnotemark[3]\\Konstantinos Panagiotou\thanks{Ludwig-Maximilians-Universit\"at LMU M\"unchen, Germany} , and Pascal Pfister\footnotemark[3]}

\date{30 November 2017}   
\maketitle

\begin{abstract}   
We study the Moran process as adapted by Lieberman, Hauert and Nowak.
This is a model of an evolving population on a graph or digraph where certain 
individuals, called ``mutants'' have fitness $r$ and other individuals, called ``non-mutants'' have
fitness~$1$.
We focus on the situation where the mutation is advantageous, in the sense that $r>1$.
A family of digraphs is said to be strongly amplifying if the extinction probability
tends to~$0$ when the Moran process is run on digraphs in this family.
The most-amplifying known family of digraphs is the family of megastars of
Galanis et al. We show that this family is optimal, up to logarithmic factors, since every
strongly-connected $n$-vertex digraph has extinction probability $\Omega(n^{-1/2})$.
Next, we show that there is an infinite family of undirected graphs, called dense incubators, whose
extinction probability is $O(n^{-1/3})$. We show that this is optimal, up to constant factors.
Finally, we introduce sparse incubators, for varying edge density, and
show that the extinction probability of these graphs is $O(n/m)$, where $m$ is the number of edges.
Again, we show that this is optimal, up to constant factors.
 \end{abstract} 
     
\section{Introduction}
\label{sec:intro}
We study the Moran process~\cite{Moran}
as adapted by 
Lieberman, Hauert and Nowak~\cite{LHN, NowakBook}.
This is a model of an evolving population.
There are two kinds of individuals --- ``mutants'' and ``non-mutants''.
The model has
a parameter~$r$, which is a positive real number, and is the fitness of
the mutants. All non-mutants have fitness~$1$.  
The individuals reside at 
the vertices of a digraph~$G$ -- each vertex contains exactly 
one individual, and it is either a mutant or a non-mutant.
In the initial state,
one vertex (chosen uniformly at random)  contains a mutant.
All of the other vertices   contain non-mutants.
The process  evolves in discrete time.  At each step, a vertex
is selected at random, with probability proportional to its fitness.
Suppose that this is vertex~$v$. Next, an   out-neighbour $w$ of~$v$
is selected uniformly at random. Finally, the state of vertex~$v$ (mutant or non-mutant) 
is copied to vertex~$w$.

If $G$ is finite and strongly connected then with probability~$1$, the
process will  either reach the state where there are only mutants (known as
\emph{fixation}) or  it will reach the state where there are only non-mutants (\emph{extinction}).
If $G$ is not strongly connected then the process may continue changing forever ---
thus, it makes sense to restrict attention to strongly-connected digraphs~$G$.
We do so for the rest of the paper.

Given a strongly-connected digraph~$G$,
we use the notation $\rho_r(G)$
to denote the probability 
that the Moran
process (starting from a uniformly-chosen initial mutant) reaches fixation
and we use the notation $\ell_r(G)$ to denote the probability that it
reaches extinction.
If $\mathcal{G}$ is a set of digraphs
then we use $\ell_{r,\mathcal{G}}(n)$
to denote $\max\{ \ell_r(G) \mid \mbox{$G\in \mathcal{G}$ and $G$ has $n$ vertices}\}$.
(To avoid trivialities, we take the maximum of the empty set to be~$0$.)
The function $\ell_{r,\mathcal{G}}$ is called the
``extinction limit'' of the family~$\mathcal{G}$.
Lieberman et al.~\cite{LHN} raised the question of whether there exists an infinite family $\mathcal{G}$
of digraphs for which 
$\limsup_{n\rightarrow \infty}  \ell_{r,\mathcal{G}}(n) = 0$.
We say in this case that $\mathcal{G}$ is \emph{strongly amplifying}.
They defined two infinite families of strongly-connected digraphs --- superstars and metafunnels ---
which turn out to be strongly amplifying.
The  most  amplifying infinite family of strongly-connected digraphs that is known 
(in the sense that the extinction limit grows as slowly as possible, as a function of~$n$)
is the family $\Upsilon$ of \emph{megastars} from~\cite{UndirAmplifiers}.
Galanis et al.\ show \cite[Theorem 6]{UndirAmplifiers}
that, for every $r>1$ 
there is an $n_0$ (depending on~$r$) so that, for all $n \geq n_0$ and for every $n$-vertex digraph $G\in \Upsilon$,
$\ell_r(G) \leq {(\log n)}^{23} / n^{1/2}$.

The first question addressed by this paper is whether the family of megastars is optimal
in the sense that the extinction limit grows as slowly as possible (as a function of~$n$).
We show that this is the case, up to logarithmic factors.
\newcommand{\statethmdigraphlower}{For all $r>1$, any strongly-connected $n$-vertex digraph $G$ with $n \ge 2$ satisfies $\ell_r(G) > 1/(5rn^{1/2})$.}  
\begin{theorem}\label{thm:digraph-lower}
\statethmdigraphlower 
\end{theorem}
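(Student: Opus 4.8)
The plan is to estimate the fixation probability $\rho_r(G)=\frac1n\sum_v p_v$, where $p_v$ is the probability of fixation when the initial mutant is placed at~$v$; since $\ell_r(G)=1-\rho_r(G)=\frac1n\sum_v(1-p_v)$, it suffices to show $\sum_v(1-p_v)>\sqrt n/(5r)$.

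First I would carry out a one‑step analysis at the single‑mutant states. If the lone mutant is at~$v$, the total fitness is $r+n-1$, and the first configuration‑changing step is either a birth (the mutant reproduces, probability proportional to~$r$) or the extinction event (some non‑mutant $u\in\Nin(v)$ reproduces along $uv$, probability proportional to $f(v):=\sum_{u\in\Nin(v)}1/\dout(u)$). Hence the ratio of the birth probability to the extinction probability is at most $r/f(v)$, so $1-p_v\ge f(v)/(r+f(v))$. Strong connectivity forces $\dout(u)\ge 1$ for all~$u$, so $f$ is well defined, and reversing the order of summation gives $\sum_v f(v)=\sum_u \dout(u)/\dout(u)=n$. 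If the ``temperature'' $f$ is spread out (many vertices with $f(v)=\Omega(1)$) this bound alone yields $\sum_v(1-p_v)=\Omega(n)$, far more than needed; the hard case is the opposite extreme, where a tiny set $S$ of hot vertices carries almost all of the mass~$n$ (as in a directed star), so almost every vertex is cold ($f(v)$ tiny) and the one‑step bound is vacuous for it.

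For the cold vertices I would go beyond one step and follow the size $|A_t|$ of the mutant set as a birth/death walk on $\{0,\dots,n\}$ whose step probabilities are governed by the out‑flow $\sum_{u\in A_t}|\Nout(u)\setminus A_t|/\dout(u)$ out of~$A_t$ and the analogous flow into~$A_t$. The key structural fact is that an amplifier cannot keep this walk strongly biased upwards everywhere: at a configuration $V\setminus\{u\}$ the next relevant step reaches fixation only with probability $rf(u)/(1+rf(u))$, which is small precisely when $u$ is cold --- and in the hard case above almost every missing vertex is cold, so near full occupancy the walk is actually pushed back down. Quantifying the interplay of the ``bottom'' picture (few hot vertices, carrying the mass) and the ``top'' picture (most vertices cold, hence hard to convert last), I would aim for a second per‑vertex bound of the form $1-p_v\ge c/(r+g(v))$ with a constant $c>0$ and a graph parameter~$g$ satisfying $\sum_v g(v)=O(n^{3/2})$ --- concretely, $g(v)$ of order $\sqrt n$ for cold vertices and at most~$n$ for the few hot ones. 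Pinning down the right~$g$ together with the supermartingale/coupling that certifies the walk must survive on the order of $\sqrt n$ near‑death configurations before it can fixate is the step I expect to be the main obstacle, and it is where the exponent $1/2$ genuinely enters.

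Finally I would combine the two estimates. Taking $g(v)=0$ for hot vertices (where $1-p_v\ge\frac12$ already exceeds $c/(r+g(v))$ once $c\le\frac12$) and $g(v)=O(\sqrt n)$ for cold ones, so that $1-p_v\ge c/(r+g(v))$ for \emph{every}~$v$ while $\sum_v g(v)=O(n^{3/2})$, the Cauchy--Schwarz (AM--HM) inequality gives
\[
  \sum_v(1-p_v)\;\ge\;\sum_v\frac{c}{r+g(v)}\;\ge\;\frac{cn^2}{\sum_v\bigl(r+g(v)\bigr)}\;=\;\frac{cn^2}{rn+O(n^{3/2})}\;=\;\Omega(\sqrt n).
\]
Tracking the constants through this chain, and checking the smallest values of~$n$ (where the inequality is tightest) directly, then yields $\sum_v(1-p_v)>\sqrt n/(5r)$, i.e.\ $\ell_r(G)>1/(5rn^{1/2})$.
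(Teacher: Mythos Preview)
Your proposal has a genuine gap at exactly the point you yourself flag as ``the main obstacle'': you never prove the per-vertex bound $1-p_v \ge c/(r+g(v))$ with $\sum_v g(v)=O(n^{3/2})$; you only state it as a target. The near-fixation heuristic you offer---that cold vertices are hard to fill last, so the walk is pushed back from configurations $V\setminus\{u\}$---does not translate into a lower bound on the extinction probability from a \emph{single} starting mutant at a cold vertex~$v$. Even granting that the process lingers near the top when the missing vertex is cold, that tells you nothing directly about $1-p_v$ for an initial mutant far from fixation; the connection between ``$\sqrt n$ hard last-vertex fills'' and a per-vertex $\Omega(1/\sqrt n)$ extinction bound is exactly what would need to be proved, and nothing in the proposal indicates how.

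The paper's proof is quite different and stays entirely at the \emph{bottom} of the walk, using a two-step rather than a one-step analysis. With $Q_u=f(u)$, the exact one-step recursion reads
\[
\ell_r(u)=\frac{Q_u}{r+Q_u}+\frac{r}{r+Q_u}\cdot\frac{1}{\dout(u)}\sum_{v\in\Nout(u)}\ell_r(\{u,v\}),
\]
and a crude bound on the two-mutant state (wait for $v$ to be killed first) gives $\ell_r(\{u,v\})\ge\bigl(1-\tfrac{3r}{2r+Q_v}\bigr)\ell_r(u)$ whenever $\ell_r(u)\le 1/2$. Substituting and rearranging yields, for any $u$ with $\ell_r(u)\le 1/4$, an \emph{upper} bound on $Q_u$ in terms of $\ell_r(u)$ and the $Q_v$'s of its out-neighbours. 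Summing this over $A=\{u:\ell_r(u)\le 2\ell_r(G)\}$ (which has $|A|>n/2$ by averaging) and using $Q_w/(2r+Q_w)\le\ell_r(w)$ on the right-hand side produces
\[
\sum_{u\in A}Q_u\;\le\;8r^2\,n\,\ell_r(G)^2.
\]
Since strong connectivity gives $Q_u\ge 1/n$ for every~$u$, the left side exceeds $|A|/n>1/2$, and solving $1/2<8r^2 n\,\ell_r(G)^2$ already gives $\ell_r(G)>1/(4r\sqrt n)$. No birth--death coupling, no near-fixation analysis, no auxiliary function~$g$, and no AM--HM step is needed; the $\sqrt n$ arises simply because the inequality is quadratic in $\ell_r(G)$.
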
   

For undirected graphs, the most amplifying graphs previously known were stars, whose extinction probability tends to $1/r^2$ (as the size of the star grows). In particular, no strongly-amplifying family of undirected graphs was known. In our next result we show that such families do exist, and that they can have extinction probability $\ell_r(G) = O(n^{-1/3})$.\footnote{See Section~\ref{sec:related} for a discussion of 
simultaneous independent work that also resolves this question.} Note that throughout the paper, we write ``graph'' exclusively to refer to undirected graphs, which we view as a special case of digraphs.

\newcommand{\denselowerbound}{71/(r(r-1)^2n)^{1/3}}
\newcommand{\statethmincubator}{For all $r > 1$, there exists an infinite family $\mathcal{D}_r$ of 
connected graphs with the following property. If $G \in \mathcal{D}_r$ has $n$ vertices, then 
$\ell_r(G) \leq \denselowerbound
$.}
\begin{theorem}\label{thm:incubator}
\statethmincubator
\end{theorem}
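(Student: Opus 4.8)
The plan is to exhibit the family $\mathcal{D}_r$ explicitly and then bound $\ell_r(G)$ by splitting a run of the Moran process into phases, comparing each phase to an explicit biased random walk. For each $n$ in a suitable infinite set, a dense incubator $G$ on $n$ vertices is built from three parts: a large \emph{reservoir} of $n-O(n^{2/3})$ low-degree vertices, each of degree $\Theta(n^{1/3})$; a small \emph{core} of high-degree vertices; and, between them, $O(n^{2/3})$ \emph{incubator} vertices of degree $\Theta(n^{2/3})$, organised into $\Theta(n^{1/3})$ vertex-disjoint gadgets whose job is to turn a single mutant into many mutants before releasing it toward the core (so $G$ has $\Theta(n^{4/3})$ edges). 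The edges are chosen so that: (i) every reservoir or incubator vertex $v$ has $\sum_{u\sim v}1/\deg(u)=O(n^{-1/3})$, so the first ``relevant'' step of a mutant at $v$ is favoured over extinction by a factor $\Omega(rn^{1/3})$; (ii) inside an incubator the degrees increase toward the core by less than a factor $r$ per step, so that once it has a foothold the mutant climbs to the core essentially monotonically; and (iii) a mutant core vertex, by flickering on and off, infects reservoir vertices one at a time exactly as the centre of a star does. The exponent $1/3$ is dictated by balancing the two nontrivial failure probabilities below, each of which works out to $\Theta(n^{-1/3})$; the sparse incubators of the later theorem arise from the same blueprint with the reservoir degree set to $\Theta(m/n)$ instead.

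Fix $G\in\mathcal{D}_r$ with $n$ vertices. In \emph{Phase 0} we dispose of bad starts: the non-reservoir vertices (core plus incubators) form a set $B$ of size $O(n^{2/3})$, and on initial mutants in $B$ we use only the trivial bound $\ell_r(G)\le 1$, contributing $|B|/n=O(n^{-1/3})$. In \emph{Phase 1} (survival and incubation), for an initial mutant $v\notin B$, property (i) makes the first relevant step kill the mutant with probability only $O(n^{-1/3}/r)$; conditioned on surviving, properties (i)--(ii) let us couple the evolution to a walk whose $i$-th ``upward'' step fails with probability $O(n^{-1/3}\beta^{i})$ for a fixed $\beta\in(0,1)$ (the infected set grows, and each newly infected vertex sits strictly higher in the degree hierarchy), so a geometric-series union bound shows that with probability $1-O(n^{-1/3})$ the process reaches a \emph{safe configuration}: some core vertex $x$ is mutant, and $k_0=\Theta(\log_r n)$ of its reservoir neighbours are mutant.

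In \emph{Phase 2} (takeover) we start from a safe configuration and track $k=$ the number of mutant reservoir neighbours of $x$. Averaging over the fast on/off flicker of $x$ itself (a time-change argument), $k$ stochastically dominates a birth-and-death chain whose upward steps are a constant factor more likely than its downward steps, the factor being essentially $r^2$; a gambler's-ruin estimate then gives that $k$ reaches its maximum before hitting $0$ with probability $1-r^{-\Omega(k_0)}$, which is $1-O(n^{-1/3})$ once $k_0$ is a large enough multiple of $\log_r n$. From there the potential $\Phi(S)=\sum_{v\in S}1/\deg(v)$ finishes the job: for every graph and every $r>1$ one has $\E[\Delta\Phi\mid S]=\tfrac{r-1}{F}\sum_{u\in S,\, w\notin S,\, u\sim w}\tfrac{1}{\deg(u)\deg(w)}\ge 0$, where $F$ is the current total fitness, so $\Phi$ is a submartingale with increments $O(n^{-1/3})$; combined with the fact that the construction keeps this weighted boundary bounded below whenever the configuration is nontrivial, a standard drift/optional-stopping argument forces fixation with only $O(n^{-1/3})$ further failure probability. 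A union bound over Phases 0--2 gives $\ell_r(G)=O(n^{-1/3})$, and tracking the constants through the birth-and-death and martingale estimates yields the claimed bound $\denselowerbound$; since there is one graph for each admissible $n$, the family $\mathcal{D}_r$ is infinite.

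The hard part is Phase 2: the true process is not literally a one-dimensional chain, and one must show it can be coupled to (or dominates) the clean biased walk for the whole takeover --- in particular that a reservoir vertex once infected is re-infected far faster than it is lost (so $k$ decreases only by ``losing a fight at the core''), that the rapid internal flicker of $x$ and the behaviour of the other, possibly already-infected incubators can be absorbed into a stochastic-domination/time-change argument rather than tracked explicitly, and that in the final sub-phase the accumulated $\Phi$-drift genuinely dominates the fluctuations all the way to fixation. Getting the exponent in $1-r^{-\Omega(k_0)}$ sharp enough that the choice $k_0\asymp\log_r n$ makes the Phase-2 failure exactly $\Theta(n^{-1/3})$, rather than $n^{-\epsilon}$ for a worse $\epsilon$, is the delicate quantitative point, and it is precisely this balance that pins the reservoir degree at $\Theta(n^{1/3})$ in the construction.
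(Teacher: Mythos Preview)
Your sketch does not match the paper's construction and, more importantly, has a genuine gap in Phase~2. The paper's dense incubator is much simpler than what you describe: $V_1$ consists of degree-\emph{one} leaves (not degree $\Theta(n^{1/3})$), $V_2$ is $k=\Theta(n^{2/3})$ star centres, and $V_3$ is a clique of size $\beta k$; there is no ladder of increasing degrees inside a gadget. Your own construction is never pinned down, and it is internally inconsistent: you place the incubator layer \emph{between} reservoir and core, yet Phase~2 speaks of ``reservoir neighbours of a core vertex $x$'' and property~(iii) says the core infects reservoir vertices directly. Without a concrete graph, the Phase~1 coupling (``the $i$-th upward step fails with probability $O(n^{-1/3}\beta^i)$'') is an assertion, not an argument.

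The decisive gap is the endgame. Your Phase~2 ends with: from a configuration in which the neighbours of \emph{one} core vertex are mutant, the submartingale $\Phi(S)=\sum_{v\in S}1/\deg(v)$ forces fixation with failure $O(n^{-1/3})$. But the optional-stopping bound for a bounded submartingale only gives fixation probability $\ge \Phi(S_0)/\Phi(V)$, and here $\Phi(S_0)$ is a vanishing fraction of $\Phi(V)$ (in your own parameters, $\Phi(V)\asymp n/n^{1/3}=n^{2/3}$ while $\Phi(S_0)$ is far smaller), so you get a lower bound on fixation, not $1-O(n^{-1/3})$. A drift argument does not rescue this either: the drift $\tfrac{r-1}{F}\sum_{u\in S,w\notin S,u\sim w}\tfrac{1}{\deg(u)\deg(w)}$ is \emph{not} bounded below uniformly over nontrivial configurations (take $S$ a single high-degree vertex), contrary to what you assume. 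The paper's endgame is completely different and is the idea you are missing: once there are $\lfloor b^{1/3}\rfloor$ mutants in the clique $V_3$, a gambler's-ruin coupling shows the process survives for time $\exp(\Omega((\beta k)^{1/3}))$ except with negligible probability, and since the expected absorption time is only polynomial in $n$ (Lemma~\ref{lem:absorb-time}), Markov's inequality forces fixation. This ``survive far longer than the absorption time'' trick replaces your unproven submartingale step entirely.
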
 

The graphs in the family $\mathcal{D}_r$ are called \emph{dense incubators}.
Each such graph is parameterised by a number $k$, which is the
square of an integer.
Taking $\beta$ to be an integer constant depending on $r$, the graph consists of $k$ stars, each with $\lceil r\sqrt{\beta k}\rceil$ leaves, together with a clique of size $\beta k$.
Every centre of every star is connected to every node in the clique.
More details  are given in Definition~\ref{def:incubator} (this definition also defines sparse incubators, which
we will discuss shortly).

It is known \cite[Corollary 7]{MoranAbsorb}
that extinction probabilities are monotonic in~$r$
in the sense that if $0<r\leq r'$ then, for any digraph~$G$,
$\ell_{r'}(G) \leq \ell_{r}(G)$.
Thus, Theorem~\ref{thm:incubator} guarantees that, for every  $r'>r$ and every $n$-vertex graph in $\mathcal{D}_r$, we also have
$\ell_{r'}(G) \leq \denselowerbound$. 

The next question that we address is whether the family
$\mathcal{D}_r$
is optimal (again, in the sense that the extinction limit grows as slowly as possible).
We show that this is the case, up to constant factors (depending on~$r$). 
\newcommand{\statethmtightupper}{Let $r>1$. 
Consider any connected $n$-vertex graph $G$ with $n\geq 2$. Then
$\ell_r(G) >   1/(42r^{4/3} n^{1/3})$. }
\begin{theorem}\label{thm:super-tight-upper}
\statethmtightupper   \end{theorem}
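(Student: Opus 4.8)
The plan is to lower-bound $\ell_r(G)=\frac1n\sum_{v}\ell_r(G,v)$, where $\ell_r(G,v)$ is the extinction probability from a single initial mutant at $v$. The one basic tool is a \emph{one-step bound}: from the mutant set $\{v\}$ the next change produces $\emptyset$ or $\{v,u\}$ for a neighbour $u$, and extinction wins with probability exactly $\sigma_v/(r+\sigma_v)$, where $\sigma_v:=\sum_{u\sim v}1/d_u$ (the weight $r$ is $v$ reproducing, the weight $1/d_u$ is a non-mutant neighbour $u$ reproducing into $v$). Hence $\ell_r(G,v)\ge\sigma_v/(r+\sigma_v)$. Two facts drive everything: the ``conservation law'' $\sum_v\sigma_v=\sum_u d_u/d_u=n$, and $\sigma_v\ge d_v/(n-1)$ for all $v$.

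Write $A=\{v:\sigma_v\ge r\}$ for the ``heavy'' vertices; each $v\in A$ has $\ell_r(G,v)\ge\frac12$. Two cases are immediate. If $|A|\ge n^{2/3}/(21r^{4/3})$ then $\ell_r(G)\ge|A|/(2n)\ge1/(42r^{4/3}n^{1/3})$, and we are done. If $\sum_{v\notin A}\sigma_v\ge n/2$, then, since $\sigma_v/(r+\sigma_v)>\sigma_v/(2r)$ for $v\notin A$, $\ell_r(G)>\frac1{2rn}\sum_{v\notin A}\sigma_v\ge1/(4r)$, again done. So I may assume $|A|<n^{2/3}/(21r^{4/3})$ but $\sum_{v\in A}\sigma_v>n/2$: few ``hubs'', carrying almost all the conserved mass. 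This forces $G$ to resemble a dense incubator --- a small hub set $A$ with many low-$\sigma$ vertices pendant on it, each contributing $1/d_u$ to $\sigma_c$ for every hub-neighbour $c$.

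The heart of the proof --- and the step I expect to be the main obstacle --- is to extract enough extinction probability from a vertex $u$ adjacent to a hub. Following the mutant, from $\{u\}$ it passes (with the probability complementary to the one-step extinction event) to some neighbour $c$; restricting attention to the ``safe'' states $\{u\}$ and $\{u,c'\}$ with $c'\in A\cap N(u)$, and running until the mutant set hits $\emptyset$ or escapes the safe set, one obtains a finite chain: from $\{u,c'\}$ the weight of returning to $\{u\}$ (i.e.\ of $c'$ being overwritten) is $\sigma_{c'}-1/d_u$, against escape weight at most $2r$, so the chain makes $\Theta(\sigma_{c'}/r)$ returns to $\{u\}$ before escaping, and each visit to $\{u\}$ kills the mutant with probability $\approx\sigma_u/r$. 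Solving the resulting (two-variable) exit system gives
\[
\ell_r(G,u)\ \gtrsim\ \frac{\sigma_u d_u}{\sigma_u d_u + r\,|N(u)\setminus A| + r^2\sum_{c\in A\cap N(u)}\sigma_c^{-1}},
\]
and one checks this recovers $\asymp r^{-2}$ on the star and $\asymp n^{-1/3}$ on dense incubators. Escaped states are handled ``softly'': they simply count as zero extinction, so no analysis of the post-escape behaviour is needed.

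The remaining step is a counting/convexity argument. Summing the displayed bound and using $\sigma_u\ge1/d_c$ ($c\sim u$) and $\sigma_u\ge d_u/(n-1)$, one reaches (after routine bookkeeping of the ``linear regime'' of the bound) an estimate of the form $\ell_r(G)\gtrsim\frac1{nr^2}\sum_{c\in A}\sigma_c\bigl(\frac{d_c}n+\frac{\sigma_c}{d_c}\bigr)$; applying AM--GM to the bracket cancels the $d_c$'s, leaving $\ell_r(G)\gtrsim n^{-3/2}r^{-2}\sum_{c\in A}\sigma_c^{3/2}$, and then the power-mean inequality with $\sum_{c\in A}\sigma_c>n/2$ and $|A|<n^{2/3}/(21r^{4/3})$ gives $\sum_{c\in A}\sigma_c^{3/2}\gtrsim r^{2/3}n^{7/6}$, hence $\ell_r(G)\gtrsim r^{-4/3}n^{-1/3}$. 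The exponent $1/3$ is the balance point: a would-be amplifier must concentrate the conserved mass $n$ onto $o(n)$ hubs, but the more it concentrates, the smaller each $\sigma_c$ must be relative to $d_c$, and convexity turns this trade-off into the $n^{-1/3}$ floor that dense incubators (Theorem~\ref{thm:incubator}) attain. The points I expect to be delicate: after leaving $\{u\}$ the mutant may wander among the hubs or build a second layer rather than returning to $\{u\}$ as a singleton, so the safe set and its exit analysis must be arranged so that such wandering only helps; the escape event is cleanest in terms of cut quantities between the small mutant set and $A$; the behaviour as $r\to1$ needs care (``heavy'' should perhaps mean $\sigma_v\gtrsim r^2$, cf.\ the $(r-1)^2$ in Theorem~\ref{thm:incubator}); the thresholds and $r$-powers must be tuned so the final constant stays below $42$; and very small $n$ may need a direct argument. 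A single clean supermartingale seems unlikely to work, since $\sum_{v\in S}1/d_v$ is a strict submartingale for $r>1$ whose drift-to-quadratic-variation ratio is too large on mutant sets bordering high-degree vertices.
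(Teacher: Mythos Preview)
Your two-step bound on $\ell_r(u)$ is essentially the paper's Lemma~\ref{lem:fixation-props} rearranged: the paper writes
\[
Q_u \le \frac{4r\ell_r(u)}{d(u)}\sum_{v\in N(u)}\frac{r}{2r+Q_v},
\]
and splitting that sum into hub neighbours ($Q_v\ge r$, contributing $\lesssim 1/Q_v$) and non-hub neighbours (contributing $\lesssim 1/r$) recovers exactly your displayed inequality. So the Moran-process input is the same; the two arguments diverge in how they aggregate it.

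Your aggregation step has a real gap. You assert that summing the displayed bound over $u$ and invoking $\sigma_u\ge 1/d_c$ and $\sigma_u\ge d_u/(n-1)$ yields
\[
\ell_r(G)\ \gtrsim\ \frac{1}{nr^2}\sum_{c\in A}\sigma_c\Bigl(\frac{d_c}{n}+\frac{\sigma_c}{d_c}\Bigr)
\]
as ``routine bookkeeping''. It is not: your per-vertex bound involves the dangers $\sigma_c$ of hubs in $N(u)$ but not their degrees $d_c$, and converting a sum over non-hubs $u$ of ratios into a sum over hubs $c$ of the quantities $\sigma_c d_c/n$ and $\sigma_c^2/d_c$ requires control on how the neighbourhoods $N(c)$ overlap and how the three denominator terms compete across different $u$. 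For example, a non-hub $u$ adjacent to one hub with $\sigma_c\approx r$ and to many hubs with huge $\sigma$ has its displayed bound dominated by the small-$\sigma$ hub, decoupling $\ell_r(u)$ from the hubs carrying the conserved mass; no amount of AM--GM downstream repairs that. Relatedly, your partition threshold $\sigma_v\ge r$ is fixed and does not scale with the target $\ell_r(G)$, which is why you need an extra convexity layer and why the $r\to 1$ regime becomes delicate (as you note).

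The paper handles aggregation by going the other direction. It sums the same two-step inequality over a set $S$ of \emph{low-extinction} vertices to get the single estimate $\sum_{v\in S}Q_v\le 4r^2\alpha\, n\,\ell_r(G)$ (Lemma~\ref{lem:danger-bound}), and then uses this black box twice: first (Lemma~\ref{lem:high-deg-verts}) to extract a set $B$ with every vertex of degree at least $1/(32r^2\ell_r(G)^2)$ and $\sum_{v\in B}d(v)\ge n/(144r^2\ell_r(G))$; second, after stratifying $B$ into dyadic degree layers $B_i$, to show that each $B_i$ sends at least half its edges to vertices with extinction probability above an explicit $\ell_i$. Because the paper's partition thresholds involve $\ell_r(G)$ itself, degrees and layer sizes scale automatically, and a direct count over the (essentially disjoint) neighbour sets gives $\ell_r(G)^3\gtrsim 1/(r^4 n)$ with no separate convexity step. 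This degree-stratification is precisely the structural device that replaces your unproved intermediate estimate; your power-mean finish is elegant, but without a clean route to that estimate the paper's bucketing appears to be the missing ingredient.
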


The reason that dense incubators are called ``dense'' is that an $n$-vertex dense incubator has
$\omega(n)$ edges (more specifically, it has
$\Theta(n^{4/3}$) edges). 
The final question that we address is whether there are sparse families of graphs that
are strongly amplifying.
Once again, the answer is yes.

Before we present the relevant theorems (Theorems~\ref{thm:sparse-incubator} and~\ref{thm:sparseUB})
we define a (parameterised) family of incubators, where the additional parameter  
controls the edge density.
In order to define these, we need some definitions.
Given a graph $G=(V,E)$ and subsets $S$ and $T$ of $V$,
$E(S, T)$ denotes 
the set of edges in~$E$ with one endpoint in~$S$ and the other in~$T$.
We also use the following standard definition.

\begin{definition}\label{def:expander}
Let $G=(V,E)$ be a $d$-regular graph with $n$ vertices. $G$ is a \emph{small-set expander} if
\[
\min_{\substack{\emptyset \subset S \subseteq V,\\|S| \le n^{1/3}}}\frac{|E(S,V\setminus S)|}{|S|} \ge \frac{d}{4}.
\]
\end{definition}

 Let $\Zone$ denote the set of positive integers.
Given a graph $G=(V,E)$ and disjoint subsets $S$ and $T$ of $V$  
we use $G[S]$ to denote the subgraph of~$G$ induced by~$S$ and we use
$G[S,T]$ to denote the graph with vertex set $S\cup T$
and edge set $E[S,T]$. 
This graph is said to be \emph{biregular} if all vertices in~$S$ have the same degree
and also all vertices in~$T$ have the same degree.
Using Definition \ref{def:expander}, we can now define families of incubators (see Figure~\ref{fig:incubator}).

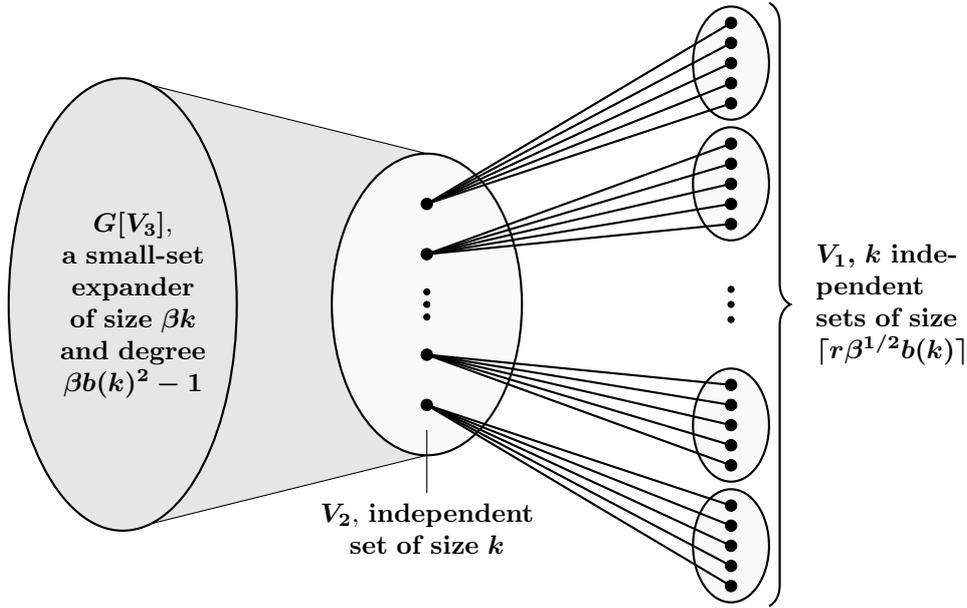
\begin{figure}\label{fig:incubator}
	\begin{center}
		\begin{tikzpicture}
		\colorlet{verylightgray}{gray!5};
		\colorlet{lightgray}{gray!20};
		\colorlet{anothergray}{gray!20};


\def\hr{3}	 		
\def\wr{1.5} 		
\def\s{4} 			
\def\t{4} 			
\def\hc{2} 			
\def\wc{1.25}  	
\def\hl{0.75}     	
\def\wl{0.5}		


\draw [fill=lightgray] (0,\hr) -- (0,-\hr) -- (\s,-\hc) -- (\s,\hc) -- (0,\hr);


\def\twr{1.5*\wr}
	
\draw [thick, fill=anothergray] (0,0) ellipse (\wr cm and \hr cm);

\draw  node[right, text width=\twr cm, align=center, font=\small] at (-\twr/2, 0) {$\bm{G[V_3]},$ \\ \textbf{a small-set expander of size} $\bm{\beta k}$ \textbf{and degree} $\bm{\beta b(k)^2 -1}$};

\def\twc{3*\wc}

\draw [thick, fill=verylightgray] (\s,0) ellipse (\wc cm and \hc cm);

\def\a{2*\hc / 6} 			
\foreach \y in {-\hc + \a, -\hc + 2*\a,-\hc + 4*\a,-\hc + 5*\a} {
		\draw [fill=black] (\s ,\y) circle(0.075cm); }

\foreach \y in {-\hc + 3*\a + \a/4,-\hc + 3*\a,-\hc + 3*\a - \a/4} {
		\draw [fill=black] (\s ,\y) circle(0.04cm); }

\draw node[below, text width=\twc cm, align=center, font=\small] at (\s, -\hc -0.5) {$\bm{V_2},$ \textbf{independent set of size} $\bm{k}$};

\draw (\s, -\hc -0.5) -- (\s, -\hc + 0.5*\a);		

		
\def\b{4*\hc /10} 			
\foreach \y in {-2*\hc + \b, -2*\hc + 3*\b,-2*\hc + 7*\b,-2*\hc + 9*\b} {
				\draw [thick, fill=verylightgray] (\s + \t,\y) ellipse (\wl cm and \hl cm); }
				
\foreach \y in {-2*\hc + 5*\b + \b/4,-2*\hc + 5*\b,-2*\hc + 5*\b - \b/4} {
		\draw [fill=black] (\s + \t ,\y) circle(0.04cm); }

\draw[thick,decorate,decoration={brace,amplitude=8pt}] (\s + \t + \wl , 2*\hc) -- (\s + \t + \wl , -2*\hc); 
			 
\draw node[right, text width=2cm, align=left, xshift=8pt, font=\small] at (\s + \t + \wl+0.2, 0) {$\bm{V_1, \, k}$ \textbf{independent sets of size} $\bm{\ceil{r \beta^{1/2} b(k)}}$};

\def\c{2*\b / 6}
			\foreach \y in {-2*\hc + \c, -2*\hc + 2*\c,-2*\hc + 3*\c,-2*\hc + 4*\c, -2*\hc + 5*\c} {
				\draw [fill=black] (\s + \t, \y) circle(0.075cm); 
				\draw [thick] (\s + \t, \y) -- (\s,-\hc + \a); }
				
			\foreach \y in {-2*\hc + 2*\b+ \c, -2*\hc + 2*\b + 2*\c,-2*\hc + 2*\b + 3*\c,-2*\hc + 2*\b + 4*\c, -2*\hc + 2*\b + 5*\c} {
				\draw [fill=black] (\s + \t, \y) circle(0.075cm); 
				\draw [thick] (\s + \t, \y) -- (\s,-\hc + 2*\a); }
				
		\foreach \y in {-2*\hc + 6*\b+ \c, -2*\hc + 6*\b + 2*\c,-2*\hc + 6*\b + 3*\c,-2*\hc + 6*\b + 4*\c, -2*\hc + 6*\b + 5*\c} {
				\draw [fill=black] (\s + \t, \y) circle(0.075cm); 
				\draw [thick] (\s + \t, \y) -- (\s,-\hc + 4*\a); }		
		
		\foreach \y in {-2*\hc + 8*\b+ \c, -2*\hc + 8*\b + 2*\c,-2*\hc + 8*\b + 3*\c,-2*\hc + 8*\b + 4*\c, -2*\hc + 8*\b + 5*\c} {
				\draw [fill=black] (\s + \t, \y) circle(0.075cm); 
				\draw [thick] (\s + \t, \y) -- (\s,-\hc + 5*\a); }

		\end{tikzpicture}
		\caption{The family of $\mathcal{I}_{r,b}$ incubators. As $G[V_2, V_3]$ is a biregular graph with $\beta k b(k)^2$ edges, each vertex in $V_2$ sends $\beta b(k)^2$ edges to $V_3$ and each vertex in $V_3$ sends $b(k)^2$ edges to $V_2$.}
	\end{center}
\end{figure}

\begin{definition}\label{def:incubator}
Let $r>1$ and let $\beta = 26\ceil{r^2/(r-1)}$.
Let $\dd:\Zone \rightarrow \Zone$ be any function
that satisfies $\dd(k) \le \sqrt{k}$ for all $k$. Then a graph $G = (V,E)$ is a member of the family $\mathcal{I}_{r,\dd}$ of \emph{incubators with branching factor $\dd$} if and only if there exists 
a positive integer~$k$ and a partition $V_1,V_2,V_3$ of $V$ such that the following properties hold.
\begin{enumerate}[(i)]
\item   $|V_1| = k\ceil{r\sqrt{\beta}\dd(k)}$, $|V_2| = k$, and $|V_3| = \beta k$.
\item   $G[V_1,V_2]$ is biregular with $k\ceil{r\sqrt{\beta}\dd(k)}$ edges.
\item   $G[V_2,V_3]$ is biregular with $\beta k\dd(k)^2$ edges.
\item   $G[V_1]$, $G[V_2]$, and $G[V_1,V_3]$ are empty.
\item   $G[V_3]$ is a small-set expander with degree $\beta \dd(k)^2 - 1$.
\end{enumerate}
\end{definition}
	
We will see at the end of this section 
how the branching factor~$\dd$ 
allows substantial control over the edge density of
incubators.
We will also see in Section~\ref{sec:incubators} (Theorem~\ref{lem:incubators-exist})
that, as long as $\dd(k)$ is eventually sufficiently large, 
then the set $\mathcal{I}_{r,\dd}$ is  infinitely large.
First, we present the relevant theorems.

\newcommand{\sparselowerbound}{2^{14}rn/((r-1)^2m)}
\newcommand{\statethmsparseincubator}{Let $r>1$.
There is a constant $b_0$ depending only on $r$ such that the following holds.
Let
$\dd:\Zone \rightarrow \Zone$ be any function
that satisfies $\dd(k) \le \sqrt{k}$ for all $k$.
Consider a graph
$G \in \mathcal{I}_{r,\dd}$  with branching factor $b(k) \geq b_0$.
Let $n$ be the number of vertices of $G$ and
$m$ be the number of edges of $G$.  Then 
$\ell_r(G) \leq \sparselowerbound$.
} 
\begin{theorem}\label{thm:sparse-incubator}
\statethmsparseincubator
\end{theorem}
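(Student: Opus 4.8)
The plan is to show that, from the uniformly random initial mutant, the process reaches fixation except with probability $O_r(n/m)$. Since the edges of $G$ are dominated by the $\Theta(\beta^2 k\dd(k)^2)$ edges inside $V_3$ and the vertices by the $\Theta(r\sqrt\beta\,k\dd(k))$ vertices of $V_1$, we have $n/m = \Theta_r(1/\dd(k))$, so it will suffice to lose only $O_r(1/\dd(k))$ probability at each step of the argument. Write $L = \ceil{r\sqrt\beta\,\dd(k)}$ for the number of leaves of a star, and choose $b_0$ to be a large constant depending on $r$ for which all the estimates below go through and for which $2^{14}rn/((r-1)^2m) < 1$. Call a mutant centre \emph{robust} if all but at most $L/2$ of its leaves are mutant; an equilibrium computation for the number of mutant leaves of such a centre shows it is overwritten only at rate $\approx 1/n$ (by $V_3$) while being repaired by its mutant leaves at rate $\gtrsim rL \gg 1$, so it remains robust for super-polynomially many steps, and that even if it eventually stops being robust it is then merely one non-mutant star in a mutant sea and recovers. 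The argument has three stages: \textbf{(1)} reach one robust mutant centre; \textbf{(2)} increase the number of robust mutant centres to $\Theta(k)$; \textbf{(3)} fixation.

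Stage~(1). With probability $1 - O(1/\dd(k))$ the initial mutant lies in $V_1$: it is then a leaf of degree $1$, so it copies itself onto its (unique, degree-$\Theta(\beta\dd(k)^2)$) centre at rate $r$ while being overwritten at rate $O(1/\dd(k)^2)$; hence with probability $1 - O(1/\dd(k)^2)$ it captures the centre, which then --- through a biased sub-process in which its occasional overwrites are continually repaired by its growing set of mutant leaves --- reaches $\ge L/2$ mutant leaves without the mutants dying, except with probability $O(1/\dd(k))$. With probability $O(1/\dd(k))$ the mutant starts in $V_2$, already such a centre. With probability $O(1/\dd(k))$ it starts in $V_3$; there, by the small-set expansion of Definition~\ref{def:expander} together with the drift analysis of stage~(2), the mutant set inside $V_3$ grows and infects a centre before dying with probability $1 - O(1/r)$, contributing $O(\tfrac1r\cdot|V_3|/n) = O(1/\dd(k))$ to the extinction probability --- this being the dominant loss, which the constant $2^{14}$ is chosen to absorb. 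So stage~(1) reaches a robust mutant centre except with probability $O(1/\dd(k))$.

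Stage~(2) is the heart of the matter. Writing $s$ for the number of mutant vertices of $V_3$ and $t$ for the number of robust mutant centres, I would use: (i)~each robust mutant centre copies a mutant into $V_3$ at rate $\approx r/n$ (having $\beta\dd(k)^2 \gg L$ neighbours there), so the in-flow into $V_3$ is $\approx rt/n$; (ii)~as $G[V_3]$ is $d$-regular with $d = \beta\dd(k)^2 - 1$, every edge inside $V_3$ joining a mutant to a non-mutant produces $s\to s+1$ moves at exactly $r$ times the rate of $s\to s-1$ moves, so the contribution of $V_3$-internal edges to the drift of $s$ is non-negative; (iii)~because each $V_3$-vertex has only $\dd(k)^2$ edges to $V_2$ while each centre has degree $\ge \beta\dd(k)^2$, the $k$ non-mutant centres overwrite mutant $V_3$-vertices at total rate at most $s/(\beta n)$; and (iv)~consequently, while the number of mutant centres is small, $s$ drifts up to a quasi-equilibrium $\gtrsim \beta r t$, so $S$ meets $\ge rt$ distinct centres (of which $t$ are already mutant), a fresh centre is infected at aggregate rate $\gtrsim (r-1)t/n$, and, once $S$ occupies a constant fraction of $V_3$, such a fresh centre captures its leaves and joins the robust set. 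Chaining these --- with $s$ first lifted above $0$, while $s \le (\beta k)^{1/3}$, by the expansion $|E(S,V_3\setminus S)| \ge |S| d/4$ combined with (ii)--(iii), and re-launched by the extant robust centre(s) at rate $\approx r/n$ whenever $s$ returns to $0$ --- I would show by a coupling with a biased random walk that $t$ reaches $\Theta(k)$, except with probability $O(1/\dd(k))$. The main obstacle is that the internal $V_3$ drift in (ii) is only neutral (bias exactly $r$ per edge, hence slack of order $r-1$, which is weak when $r$ is close to $1$): the genuine amplification must come from the centre--leaf feedback in (i) and (iv), and making the loss in (iii) small enough that this circular feedback --- $V_3$ needs mutant centres as sources, centres need a mutant $V_3$ to get infected --- yields a \emph{strictly} positive drift on $t$ is exactly what forces the choice $\beta = 26\ceil{r^2/(r-1)}$ and a large $b_0$, and is the delicate part of the whole argument.

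Stage~(3). Once $\Theta(k)$ robust mutant centres exist, they fill in all of $V_1\cup V_2$ reachable from them, after which the mutant fitness is $\ge r(|V_1|+|V_2|) = \Theta(r^2\sqrt\beta\,k\dd(k))$, exceeding $|V_3| = \beta k$ by a factor $\Theta(r^2\dd(k)/\sqrt\beta)$. Then almost every step selects a mutant, the number of non-mutant vertices of $V_3$ performs --- up to a negligible flicker of mutant centres, each of which self-repairs --- a random walk with bias $\ge r$ towards $0$, and once it drops below $(\beta k)^{1/3}$ the small-set expansion of $G[V_3]$ eliminates the last non-mutants; fixation follows except with probability $O(1/\dd(k))$. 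Chaining the three stages gives $\ell_r(G) = O_r(1/\dd(k)) = O_r(n/m)$; tracking the constants --- the tightest contribution being the $O(\tfrac1r\cdot|V_3|/n)$ loss from a $V_3$-start in stage~(1), which matches $2^{14}rn/((r-1)^2m)$ up to numerical constants --- gives the stated bound. (Monotonicity of $\ell_r$ in $r$, \cite[Corollary~7]{MoranAbsorb}, may be used to reduce any case split on $r$ to its worst value.)
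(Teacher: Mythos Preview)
Your decomposition around ``robust centres'' does not survive contact with the actual rates, and Stage~(1) in particular fails. A centre $c\in V_2$ has $L=\ceil{r\sqrt\beta\,b}$ leaves but $\beta b^2$ neighbours in $V_3$, so only a $\Theta(1/(\sqrt\beta\,b))$ fraction of its spawns go to leaves. Worse, with just one mutant leaf the centre is overwritten by its $L-1$ non-mutant leaves (each of degree~$1$) at total rate $(L-1)/W$ while being repaired at rate $r/W$, so $c$ is mutant only a $\Theta(r/L)=\Theta(1/b)$ fraction of the time. If you look at the induced birth--death chain on the number of mutant leaves (ignoring $V_3$ entirely), the up/down ratio is only $\approx r^2$, so the star goes extinct with probability $\Theta(1/r^2)$, not $O(1/b)$; your claimed bound ``reaches $\ge L/2$ mutant leaves \dots\ except with probability $O(1/b)$'' is simply false. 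The same calculation shows that \emph{every} time the leaf-count changes, the centre has meanwhile pumped $\Theta(\sqrt\beta\,b)$ mutants into $V_3$: the leaves are the slow channel and $V_3$ is the fast one, so robustness of a centre is a consequence of $V_3$ filling up, not a precursor to it.

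This is why the paper tracks $|X_t\cap V_3|$ directly and never introduces robust centres. The single $V_1$-mutant keeps the (flickering) centre alive long enough to seed $V_3$; once $V_3$ has a mutant, $|X_t\cap V_3|$ dominates a biased walk with bias $r'=(r+1)/2$ per step, where the bias is genuinely positive --- contrary to your ``only neutral'' remark --- because each boundary edge inside $V_3$ contributes an $r{:}1$ up/down ratio and the only leakage (non-mutant centres killing $V_3$-mutants) is at most a $(1+5/\beta)$ multiplicative overhead, which the choice $\beta=26\ceil{r^2/(r-1)}$ is calibrated to absorb (this is exactly Lemma~\ref{lem:beta-algebra}). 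Small-set expansion is used only to guarantee enough boundary edges while $|X_t\cap V_3|\le(\beta k)^{1/3}$, and once the walk reaches $\floor{b^{1/3}}$ it survives for super-polynomial time; combining this with the polynomial absorption-time bound (Lemma~\ref{lem:absorb-time}) gives fixation directly, with no need for your Stage~(2)/(3) feedback loop. Your circular mechanism ``$V_3$ needs centres, centres need $V_3$'' is real for the initial seeding but is not where the amplification lives.
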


As in the dense case, it turns out that the family $\mathcal{I}_{r,\bb}$ is optimal in 
the sense that (up to constant factors) the extinction limit grows as slowly as possible.

\newcommand{\statethmsparseUB}{Let $r>1$. Consider any 
connected graph $G$ with $n\geq 2$ vertices and $m$ edges.  Then 
$\ell_r(G) \ge {n}/(288 r^2m)$.}
\begin{theorem}\label{thm:sparseUB}
\statethmsparseUB
\end{theorem}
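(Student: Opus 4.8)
The plan is to average over the random location of the initial mutant and, for each fixed location, to analyse the excursions of the mutant population away from size one.

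Since the initial mutant is placed uniformly at random, $\ell_r(G)=\tfrac1n\sum_{v\in V}\ell_r(v)$, where $\ell_r(v)$ is the extinction probability of the process started from the single mutant $\{v\}$; so it is enough to show $\sum_{v\in V}\ell_r(v)\ge n^2/(288\,r^2m)$. Write $X_v:=\sum_{w\sim v}1/d(w)$ and note the handshake identity $\sum_v X_v=n$. From $\{v\}$ the next fitness-relevant step lowers the population to extinction with weight $X_v$ and raises it to size $2$ with weight $r$, so the mutant is overwritten there with probability $q_v:=X_v/(r+X_v)$, giving $\ell_r(v)\ge q_v$. This is far too weak: on a star it yields only $q_v=\Theta(1/(rn))$ for every leaf, whereas the true extinction probability of a leaf is $\Theta(1/r^2)$, and $\sum_v q_v$ can be of constant order even though $\sum_v\ell_r(v)$ has to be $\Theta(n/r^2)$ when $m=\Theta(n)$. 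A genuinely multi-step argument is needed.

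The key structural point is that whenever the mutant set has size $1$ the whole configuration is determined, so a trajectory decomposes into excursions between consecutive visits to size-one configurations. From $\{u\}$, with probability $q_u$ the population dies; otherwise it moves to $\{u,w\}$ for a uniformly random neighbour $w$ of $u$. Now comes the easy but decisive \emph{escape estimate}: from $\{u,w\}$ the next relevant step goes either to size $1$ or to size $3$, so to reach fixation before returning to size $\le 1$ the process must first take the upward step, which has probability $\alpha_{u,w}:=rA_{u,w}/(rA_{u,w}+P_{u,w})\le\min\{1,2r/P_{u,w}\}$; here $A_{u,w}=2-1/d(u)-1/d(w)\le 2$ is the relative upward weight and $P_{u,w}=\sum_z a_z/d(z)=X_u+X_w-1/d(u)-1/d(w)$ is the pull-back weight, the sum running over non-mutant neighbours $z$ of $\{u,w\}$ with $a_z\in\{1,2\}$ counting adjacencies to $\{u,w\}$. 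Moreover, a downward step out of $\{u,w\}$ drops the population to $\{u\}$ or to $\{w\}$, with conditional probabilities $\beta_{u,w}=(X_w-1/d(u))/P_{u,w}$ and $1-\beta_{u,w}$. Discarding the (non-negative) contribution of trajectories that reach size $3$, this yields the system
\[
\ell_r(u)\ \ge\ q_u+(1-q_u)\,\E_{w}\bigl[(1-\alpha_{u,w})\bigl(\beta_{u,w}\,\ell_r(u)+(1-\beta_{u,w})\,\ell_r(w)\bigr)\bigr]
\]
(with $w$ uniform over the neighbours of $u$), alongside $\ell_r(v)\ge q_v$. When $d(u)=1$ one has $\beta_{u,w}=1$, the $\ell_r(w)$ term drops out, and solving gives $\ell_r(u)\ge q_u/(q_u+\alpha_{u,w})$; for a star leaf this is $\Theta(1/(1+r^2))$, the correct order, and the same computation gives the right order for dense incubators. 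In general each excursion is a race between an extinction chance of order $q_u$ and an escape chance of order $2r/P_{u,w}$, so heuristically $\ell_r(u)$ is of order $\min\{1,X_uP_{u,w}/r^2\}$.

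The main obstacle is to extract from this system the exact bound $\sum_v\ell_r(v)\ge n^2/(288\,r^2m)$ without losing a polynomial-in-$n$ factor. For non-leaves the inequality genuinely couples $\ell_r(u)$ to $\ell_r(w)$ over edges $uw$, so it must be treated as a linear system on $G$: move the self-term to the left to get $\ell_r(u)\ge \bigl(q_u+\sum_{w\sim u}M_{u,w}\ell_r(w)\bigr)/(1-c_u)$ with $M_{u,w}=\tfrac{1-q_u}{d(u)}(1-\alpha_{u,w})(1-\beta_{u,w})$ and $c_u=\tfrac{1-q_u}{d(u)}\sum_{w\sim u}(1-\alpha_{u,w})\beta_{u,w}$, and then either iterate this sub-harmonic inequality or sum it over $v$ and regroup the resulting quadratic-in-degrees expression. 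The two ingredients that should make the regrouping work are the handshake identity $\sum_v X_v=n$ and the Cauchy--Schwarz bound $\sum_v 1/d(v)\ge n^2/(2m)$; the difficulty is that $M_{u,w}$ is not symmetric and that $X_u$, $P_{u,w}$ and $d(u)$ interact, so keeping the estimate tight — and pinning down the constant $288$ — is where the real work lies, rather than in the excursion set-up or the one-step escape estimate.
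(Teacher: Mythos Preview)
Your proposal is not a proof: you set up a coupled linear system of inequalities for the $\ell_r(u)$ and then stop, saying explicitly that ``keeping the estimate tight --- and pinning down the constant $288$ --- is where the real work lies.'' That is exactly the content of the theorem; the excursion decomposition and the one-step escape bound from $\{u,w\}$ are standard preliminaries (they appear in the paper as Observation~\ref{obs:new} and Lemma~\ref{lem:new}), not the substance. As written, nothing in your text yields the inequality $\sum_v \ell_r(v)\ge n^2/(288 r^2 m)$.

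The missing idea is that you should not try to solve the coupled system for a lower bound on $\sum_v \ell_r(v)$. The paper instead \emph{inverts} the relationship. Drop the $\ell_r(w)$ term in your recursion (the paper's Lemma~\ref{lem:new} only keeps the cruder bound $\ell_r(\{u,w\})\ge(1-3r/(2r+Q_w))\ell_r(u)$), so that each vertex gives a self-contained inequality; rearranging yields, for any $u$ with $\ell_r(u)\le 1/4$,
\[
Q_u \;\le\; \frac{4r\ell_r(u)}{d(u)}\sum_{w\sim u}\frac{r}{2r+Q_w}.
\]
Summing this over a set $S$ of low-extinction vertices and swapping the order of summation, the inner sum is at most $Q_w$, and then $Q_w/(2r+Q_w)\le\ell_r(w)$ by your own first step. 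This gives $\sum_{u\in S}Q_u\le 4r^2\alpha\sum_{w\in N(S)}\ell_r(w)\le 4r^2\alpha\, n\,\ell_r(G)$ whenever $\ell_r(u)\le\alpha$ on $S$ (Lemma~\ref{lem:danger-bound}). From here the paper argues structurally: take $A=\{v:\ell_r(v)\le 2\ell_r(G)\}$ (a majority), strip out the subset with large $Q_v$, and show that the remaining vertices force the existence of many high-degree neighbours, so that $\sum_{v\in B}d(v)\ge n/(144 r^2\ell_r(G))$ for some $B$ (Lemma~\ref{lem:high-deg-verts}). Then $2m\ge\sum_{v\in B}d(v)$ gives the theorem in one line. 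In short: the route is not ``lower bound $\ell_r(u)$, sum'', but ``assume $\ell_r(G)$ small, deduce small total danger on most vertices, deduce many high-degree vertices, deduce $m$ large''. Your Cauchy--Schwarz bound $\sum_v 1/d(v)\ge n^2/(2m)$ plays no role in the paper's argument.
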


Theorem~\ref{thm:incubator} is closely related to a special case of Theorem~\ref{thm:sparse-incubator}.
To see this, consider the function $\dd(k)$ defined by
 $\dd(k) = \floor{\sqrt{k}}$.
Consider any $r>1$, and let $b_0$ be the constant (depending on~$r$) in 
Theorem~\ref{thm:sparse-incubator}. Let
\[
\mathcal{D}_r = \{G \in \mathcal{I}_{r,\dd} \mid \mbox{The parameter, $k$, of $G$ is the square of an integer and } \dd(k) \ge \dd_0\}.
\] 
Using these definitions, a slightly weaker version of Theorem~\ref{thm:incubator} 
may be obtained directly from the statement of Theorem~\ref{thm:sparse-incubator}. 
We give the proof of the stronger version in  Section~\ref{sec:main}.

Let us now consider sparse incubators.
In order to appreciate how the branching factor~$\dd$
controls the edge density of incubators, it is useful to calculate
the number of vertices and edges of a parameter-$k$ incubator in $\mathcal{I}_{r,b}$.

\begin{observation}\label{obs:incubator-nm}
Consider $G=(V,E)\in \mathcal{I}_{r,\dd}$ with  $|V_2|=k$, 
$|V|=n$ and $|E|=m$.
If $\dd(k)\geq \beta/r$ then
\begin{enumerate}[(i)]
\item $ k r\beta^{1/2} \dd(k) \le n \le  2 k r\beta^{1/2} \dd(k)$, 
\item $\beta^2 k\dd(k)^2/2 \le m \le \beta^2k\dd(k)^2$, and  
\item $   {\beta^{3/2} b(k)}/{(4r)} \leq  {m}/{n} \leq  {\beta^{3/2} b(k)}/{r}$.

\end{enumerate}
\end{observation}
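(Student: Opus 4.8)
The plan is a direct count from Definition~\ref{def:incubator}. Since $V_1,V_2,V_3$ partition $V$, property~(i) gives $n = k\ceil{r\sqrt\beta\dd(k)} + k + \beta k$. Since $G[V_1]$, $G[V_2]$ and $G[V_1,V_3]$ are empty, every edge lies in exactly one of $G[V_1,V_2]$, $G[V_2,V_3]$, $G[V_3]$, so properties (ii), (iii) and (v) give
\[
m \;=\; k\ceil{r\sqrt\beta\dd(k)} \;+\; \beta k\dd(k)^2 \;+\; \tfrac12\beta k(\beta\dd(k)^2-1),
\]
where the last term is the edge count of the $(\beta\dd(k)^2-1)$-regular graph $G[V_3]$ on $\beta k$ vertices (a small-set expander is regular by Definition~\ref{def:expander}). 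Everything else is bookkeeping around these two exact formulas.

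The lower bounds in (i) and (ii) follow by discarding nonnegative terms: $\ceil{r\sqrt\beta\dd(k)}\ge r\sqrt\beta\dd(k)$ yields $n\ge kr\sqrt\beta\dd(k)$, and rewriting $m = \tfrac12\beta^2k\dd(k)^2 + \beta k\dd(k)^2 - \tfrac12\beta k + k\ceil{r\sqrt\beta\dd(k)}$ together with $\beta k\dd(k)^2\ge\tfrac12\beta k$ (valid since $\dd(k)\in\Zone$, hence $\dd(k)\ge 1$) yields $m\ge\tfrac12\beta^2k\dd(k)^2$; neither of these uses the hypothesis $\dd(k)\ge\beta/r$. For the two upper bounds I would use $\ceil{r\sqrt\beta\dd(k)}\le r\sqrt\beta\dd(k)+1$. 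For (i) this gives $n\le kr\sqrt\beta\dd(k)+(\beta+2)k$, and since $\dd(k)\ge\beta/r$ forces $r\sqrt\beta\dd(k)\ge\beta^{3/2}$, it suffices that $\beta^{3/2}\ge\beta+2$, which holds because $\beta = 26\ceil{r^2/(r-1)}\ge 26$. For (ii), substituting the formula for $m$ reduces the claim $m\le\beta^2k\dd(k)^2$ to $\ceil{r\sqrt\beta\dd(k)}-\tfrac12\beta\le\beta(\tfrac12\beta-1)\dd(k)^2$; bounding the ceiling and using $\tfrac12\beta\ge 1$, it suffices that $r\sqrt\beta\dd(k)\le\beta(\tfrac12\beta-1)\dd(k)^2$, and dividing by $\dd(k)$ and applying $\dd(k)\ge\beta/r$ this becomes the scalar inequality $\beta^{3/2}(\tfrac12\beta-1)\ge r^2$, which follows from $\beta\ge 26\,r^2/(r-1)\ge 26r$ and $\tfrac12\beta-1\ge\tfrac14\beta$.

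Part~(iii) then needs nothing new: dividing the bounds of~(ii) by those of~(i) gives
\[
\frac mn \;\ge\; \frac{\tfrac12\beta^2k\dd(k)^2}{2kr\beta^{1/2}\dd(k)} \;=\; \frac{\beta^{3/2}\dd(k)}{4r}, \qquad\qquad \frac mn \;\le\; \frac{\beta^2k\dd(k)^2}{kr\beta^{1/2}\dd(k)} \;=\; \frac{\beta^{3/2}\dd(k)}{r}.
\]

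The argument is essentially mechanical, so I do not expect a genuine obstacle; the only points that are not immediate are the two scalar estimates $\beta^{3/2}\ge\beta+2$ and $\beta^{3/2}(\tfrac12\beta-1)\ge r^2$ used in the upper bounds of (i) and (ii), and both are direct consequences of the definition $\beta = 26\ceil{r^2/(r-1)}$ (in particular of $\beta\ge 26$ and $\beta\ge 26r$). The one place to be slightly careful is to keep the hypothesis $\dd(k)\ge\beta/r$ (rather than merely $\dd(k)\ge 1$) when handling the $|E(V_1,V_2)|$ term in the upper bound of~(i), since there $\beta$ can be much larger than $r$.
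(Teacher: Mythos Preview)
Your proof is correct and follows the same approach as the paper: both derive the exact formulas $n = k\ceil{r\beta^{1/2}\dd(k)}+k+\beta k$ and $m = k\ceil{r\beta^{1/2}\dd(k)}+\beta k\dd(k)^2 + \tfrac{1}{2}k\beta(\beta \dd(k)^2-1)$ from Definition~\ref{def:incubator} and then obtain the bounds by elementary estimates using the size of $\beta$. The paper's version is considerably terser (it just says the upper bounds ``follow since $\beta \ge 26\ceil{r}$''), whereas you have written out the scalar inequalities explicitly, but there is no substantive difference.
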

\begin{proof} 
By the definition of $\mathcal{I}_{r,\dd}$, we have
$n = k\ceil{r\beta^{1/2}\dd(k)}+k+\beta k$ and
$m = k\ceil{r\beta^{1/2}\dd(k)}+\beta k\dd(k)^2 + \frac{k\beta(\beta \dd(k)^2-1)}{2}$.
The lower bounds on~$n$ and~$m$ follow immediately, and the upper bounds follow since $\beta \ge 26\ceil{r}$.
Putting these together gives the bounds on $m/n$.
\end{proof} 
  
Observation~\ref{obs:incubator-nm}
makes it easy to see that the function $\dd(k)$ can be tuned to achieve a variety of edge densities.

\subsection{Related work} \label{sec:related}

The Moran process is somewhat similar to a discrete version of
directed percolation known as the contact process. There  has been a lot of work
(e.g.,   \cite{Grimmett,DS,Durrett:NACS,
Lig1999:IntSys,Shah}) on the contact process and other related infection processes
such as the voter model and  SIS epidemic models. 
We refer the reader to \cite[Section 1.4]{UndirAmplifiers} for a discussion of how
these models differ from the Moran process.
 
Lieberman, Hauert and Nowak~\cite{LHN, NowakBook} 
introduced the version of the Moran process that we study. 
They raised the question of strong amplification and 
defined two infinite families of strongly-connected digraphs --- superstars and metafunnels ---
which turn out to be strongly amplifying. 
Many papers contributed to determining the fixation probability of
these  digraphs \cite{LHN,  DGMRSS2013:Superstars,JLH2015:Superstars} ---
 see \cite[Section 1.4]{UndirAmplifiers} for a discussion.
The first rigorous proof that there is an infinite family of strongly-amplifying digraphs is
in \cite{UndirAmplifiers}. This is the family of megastars discussed in the introduction.
The paper also gives lower bounds on the extinction probability of superstars and metafunnels.

The best-known lower bounds on the extinction probability of connected
undirected graphs are in  \cite{MSNatural, MSStrong}.
Theorem~1 of~\cite{MSStrong} shows that 
there is a constant $c_0(r)$ such that
for every $\epsilon>0$  
the extinction probability is at least $c_0(r)/n^{3/4+\epsilon}$.

While this manuscript was under preparation, 
George Giakkoupis posted simultaneous, independent work~\cite{George} 
also showing that strong undirected amplifiers exist. 
In the remainder of this section, we discuss this  work.

First, consider  the model of Lieberman, Hauert and Nowak~\cite{LHN, NowakBook}
which we study.
 Our Theorem~\ref{thm:incubator} 
shows that there is an infinite family of connected graphs $G$ with 
$\ell_r(G) \leq \denselowerbound$. 
Theorem~1 of~\cite{George} is
similar, but weaker by a  logarithmic factor ---  that paper constructs a (similar) family
with extinction probability $\ell_r(G) = O(\log(n)/((r-1) n^{1/3})$.
Our Theorem~\ref{thm:super-tight-upper} 
shows that any connected $n$-vertex graph (with $n\geq 3$)
has
$\ell_r(G) >   1/(42 r^{4/3} n^{1/3})$.
Theorem~2  of \cite{George} is similar, but weaker by a $(\log n)^{4/3}$ factor --- that paper
shows that the extinction probability 
$\ell_r(G)$ is $\Omega(1/(r^{5/3} n^{1/3} {(\log n)}^{4/3}))$.

Our paper is otherwise incomparable to \cite{George}.
We give a lower bound on the extinction probability of amplifying \emph{digraphs}
(Theorem~\ref{thm:digraph-lower}) but \cite{George} does not consider digraphs.
We also construct sparse families of incubators (Theorem~\ref{thm:sparse-incubator}) which 
go all the way down to constant density and
are optimally-amplifying
up to constant factors (Theorem~\ref{thm:sparseUB})  but \cite{George} does
not consider sparse graphs.
On the other hand, \cite[Theorem 3]{George} constructs a family of \emph{suppressors} 
with extinction probability at  least $1- O(r^2 \log n/n^{1/4})$, which is something that we do not study here.
Finally, Sood et al.~\cite{SAR} have introduced a variant of the model
in which the fitness of a mutant is taken to be a function of the number of vertices of the underlying digraph
(so as the number of vertices in the digraph grows, the fitness of each individual mutant decreases).
The results of \cite{George} extend to this model where $r=1+o(1)$, as a function of~$n$.
We are not aware of any applications of this model, and we don't consider it.

\subsection{Organisation of the paper}

In Section~\ref{sec:prelim}, we define some notation that we will use throughout the paper.
In Section~\ref{sec:incubators}, we 
show that, 
as long as $\dd(k)$ is eventually sufficiently large, 
then the set $\mathcal{I}_{r,\dd}$ is  infinitely large.
In Section~\ref{sec:main} we prove 
Theorems~\ref{thm:incubator} and~\ref{thm:sparse-incubator}, which give upper bounds on extinction probability.
In  Section~\ref{sec:lowerextinct}, we prove Theorems~\ref{thm:digraph-lower},
\ref{thm:super-tight-upper} and~\ref{thm:sparseUB} which give lower
bounds on extinction probability.

\section{Preliminaries}\label{sec:prelim} 
	 
We write $\Zone = \{1, 2, \dots\}$. For all $n \in \Zone$, we write $[n] = \{1, \dots, n\}$. We write $\log$ for the base-$e$ logarithm and $\lg$ for the base-2 logarithm. 
	
When $G = (V,E)$ is a digraph and $v \in V$, we write $\Nin(v) = \{w \mid (w,v) \in E\}$, $\din(v) = |\Nin(v)|$, $\Nout(v) = \{w \mid (v,w) \in E\}$, and $\dout(v) = |\Nout(v)|$. We view undirected graphs (or simply ``graphs'') as digraphs such that for all $u,v \in V$, $(u,v) \in E$ if and only if $(v,u) \in E$. Of course, we use standard conventions when counting edges in undirected graphs. That is, an undirected edge $\{u,v\}$ is only counted as one edge. If $G$ is undirected, we write $N(v) = \Nout(v) = \Nin(v)$ and $d(v) = \dout(v) = \din(v)$. 
If $S \subseteq V$, we write $N(S) = \bigcup_{v \in S}N(v)$.

Recall that the initial configuration of the Moran process is 
the configuration in which one vertex is chosen uniformly at random to be
a mutant, and the rest of the vertices are non-mutants.
We have already defined $\ell_r(G)$, which is the
probability that this process reaches extinction. 
When $G=(V,E)$ is known from the context and $v$ is a vertex of~$G$, it will also be
useful to define $\ell_r(v)$ to be the extinction probability, conditioned on the fact that the initial
mutant is~$v$.
In this case, $\ell_r(G) = \tfrac1n \sum_{v\in V} \ell_r(v)$.
More generally, when $G$ is known from the context 
and $U$ is a subset of $V(G)$, we define $\ell_r(U)$ to be the extinction probability,
when the process is run starting from the state in which vertices in~$U$ are mutants and vertices in $V\setminus U$ are non-mutants.

\section{Infinite sets of incubators}\label{sec:incubators}

The main result of this Section is  Theorem~\ref{lem:incubators-exist}, which shows
that, as long as $\dd(k)$ is eventually sufficiently large, 
then the set $\mathcal{I}_{r,\dd}$ is  infinitely large.
 
If a graph $G$ has adjacency matrix $A$ with eigenvalues $\lambda_1 \ge \dots \ge \lambda_n$, we 
let $\lambda(G) = \max\{\lambda_2, -\lambda_n\}$.
We use two existing results which, between them, imply that a sparse random regular graph
is likely to be a small-set expander.

\begin{theorem}\label{thm:cgj} (\cite[Theorem 1.1]{CGJ})
Let $C_0,K > 0$, and let $\alpha = 459652 + 229452K + \max\{30C_0^{3/2},768\}$. Let $n,d \in \Zone$, and suppose $d \le C_0 n^{2/3}$ and $n \ge 7+K^2$. Let $G$ be a uniformly random $d$-regular graph on $n$ vertices. Then $\pr(\lambda(G) \le \alpha\sqrt{d}) \ge 1 - n^{-K}$.\qed
\end{theorem}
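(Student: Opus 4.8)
This is precisely Theorem~1.1 of~\cite{CGJ}, so within the present paper there is nothing to prove, and the plan is simply to invoke that reference; the \qed in the statement reflects this. Nonetheless it is worth recording the shape of the argument behind it, since that explains both the very large explicit constant~$\alpha$ and the inverse-polynomial failure probability~$n^{-K}$.

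The quantity to be bounded is $\lambda(G) = \max\{\lambda_2, -\lambda_n\}$ for a uniformly random $d$-regular graph~$G$ on $n$ vertices, in the regime $d \le C_0 n^{2/3}$. Morally one expects $\lambda(G) = (2+o(1))\sqrt{d-1}$ (Friedman's theorem~\cite{Friedman}, for fixed~$d$), but here one only needs a bound of the form $\alpha\sqrt{d}$ with an \emph{explicit} constant, together with a polynomial tail, and~$d$ is allowed to grow. The approach of~\cite{CGJ} is a moment/trace method supplemented by size-biased couplings (a Stein's-method device): one controls $\E[\operatorname{tr}((A - \tfrac{d}{n}J)^{2k})]$ for a slowly growing $k = k(n)$, where $A$ is the adjacency matrix and $J$ the all-ones matrix, and then converts this via Markov's inequality into the claimed high-probability spectral bound. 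The combinatorial content is an estimate for the expected number of closed walks of length~$2k$ in the random regular graph, and size-biased couplings supply the concentration needed to handle the dependencies inherent in the uniform $d$-regular model without the delicate path-counting of Friedman's original proof --- at the cost of a much worse constant, whence the magnitude of~$\alpha$.

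The main obstacle, and the reason one cites rather than reproves the result, is exactly this walk enumeration: tracking the correlations created by conditioning on specified edges being present in a uniformly random regular graph, and showing that walks revisiting vertices contribute negligibly, requires a careful encoding argument that there is no reason to reproduce here. For our purposes Theorem~\ref{thm:cgj} is used purely as a black box: it asserts that a uniformly random $d$-regular graph on $n$ vertices has $\lambda(G) = O(\sqrt{d})$ with probability at least $1 - n^{-K}$, which --- together with the second of the two existing results referenced above, relating the spectral gap to small-set expansion --- will give that such a graph is, with high probability, a small-set expander in the sense of Definition~\ref{def:expander}, and hence that $\mathcal{I}_{r,\dd}$ is nonempty (indeed infinite) whenever~$\dd$ is eventually large enough.
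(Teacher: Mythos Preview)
Your proposal is correct and matches the paper exactly: Theorem~\ref{thm:cgj} is stated as a black-box citation of \cite[Theorem~1.1]{CGJ} with an immediate \qed, and no proof is given in the paper. Your additional commentary on the structure of the argument in~\cite{CGJ} is accurate background but goes beyond what the paper itself provides.
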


The following theorem is well-known, and follows from, e.g., \cite[Theorem 8.6.30]{West}.

\begin{theorem}\label{thm:cheeger}
If $G=(V,E)$ is a $d$-regular $n$-vertex graph, and 
$S$ is a non-empty proper subset of $V$, then $|E(S,V\setminus S)| \ge (d-\lambda(G))\,|S|\,|V\setminus S|/n$.\qed
\end{theorem}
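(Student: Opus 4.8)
The plan is to run the standard spectral argument underlying the expander mixing lemma. Since $G$ is $d$-regular, its adjacency matrix $A$ is real symmetric with the all-ones vector $\mathbf{1}$ as an eigenvector of eigenvalue $\lambda_1 = d$; fix an orthonormal eigenbasis $v_1 = \mathbf{1}/\sqrt{n}, v_2, \dots, v_n$ with corresponding eigenvalues $d = \lambda_1 \ge \lambda_2 \ge \dots \ge \lambda_n$. Let $\mathbf{1}_S \in \mathbb{R}^{V}$ denote the indicator vector of $S$. The first step is the counting identity
\[
|E(S, V\setminus S)| = d\,|S| - \mathbf{1}_S^{\mathsf T} A\, \mathbf{1}_S ,
\]
which holds because $\mathbf{1}_S^{\mathsf T} A\,\mathbf{1}_S = \sum_{u,v \in S} A_{uv}$ is exactly twice the number of edges inside $S$, while the degree sum $\sum_{v \in S} d(v) = d|S|$ counts each edge inside $S$ twice and each edge of $E(S,V\setminus S)$ once.

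Next I would split $\mathbf{1}_S$ into its component along $\mathbf{1}$ and the rest: write $\mathbf{1}_S = \alpha\mathbf{1} + w$ with $w \perp \mathbf{1}$. Taking the inner product with $\mathbf{1}$ gives $\alpha = |S|/n$, and Pythagoras gives $\|w\|^2 = \|\mathbf{1}_S\|^2 - \alpha^2\|\mathbf{1}\|^2 = |S| - |S|^2/n = |S|\,|V\setminus S|/n$. Since $A\mathbf{1} = d\mathbf{1}$ and $w$ is orthogonal to $\mathbf{1}$,
\[
\mathbf{1}_S^{\mathsf T} A\,\mathbf{1}_S = \alpha^2\,\mathbf{1}^{\mathsf T} A\,\mathbf{1} + w^{\mathsf T} A\, w = \frac{d\,|S|^2}{n} + w^{\mathsf T} A\, w .
\]
As $w$ lies in the span of $v_2,\dots,v_n$, expanding $w = \sum_{i\ge 2} c_i v_i$ in this basis gives $w^{\mathsf T} A\, w = \sum_{i \ge 2}\lambda_i c_i^2 \le \lambda_2\|w\|^2 \le \lambda(G)\|w\|^2$, the last step because $\lambda(G) = \max\{\lambda_2, -\lambda_n\} \ge \lambda_2$ and $\|w\|^2 \ge 0$.

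Putting the pieces together,
\[
|E(S,V\setminus S)| = d\,|S| - \mathbf{1}_S^{\mathsf T} A\,\mathbf{1}_S \ge d\,|S| - \frac{d\,|S|^2}{n} - \lambda(G)\frac{|S|\,|V\setminus S|}{n} = \frac{(d - \lambda(G))\,|S|\,|V\setminus S|}{n},
\]
using $d|S| - d|S|^2/n = d|S|(n-|S|)/n = d|S|\,|V\setminus S|/n$ in the last equality. This is exactly the claimed bound. There is no real obstacle here: every step is elementary linear algebra, and the only things to keep straight are the combinatorial identity in the first step and the inequality $\lambda(G) \ge \lambda_2$ coming directly from the definition of $\lambda(G)$. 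Alternatively, as the statement already indicates, one may simply invoke the classical reference — e.g.\ \cite[Theorem 8.6.30]{West} — since this is exactly the standard Cheeger-type / mixing-lemma inequality.
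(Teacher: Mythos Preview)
Your argument is correct: it is exactly the standard expander-mixing-lemma computation, and every step checks out. The paper itself gives no proof of this theorem at all --- it is stated with a \qed and attributed to \cite[Theorem 8.6.30]{West} --- so your direct spectral proof is entirely consistent with (and strictly more informative than) what the paper does.
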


\begin{theorem}\label{lem:incubators-exist}
Let $\dd:\Zone \rightarrow \Zone$. Suppose that for all $k \in \Zone$, $\dd(k) \le \sqrt{k}$. Suppose in addition that 
there are infinitely many~$k$ such that
$\dd(k) \geq 10^{7}$.  Then $\mathcal{I}_{r,\dd}$ contains infinitely many graphs.
\end{theorem}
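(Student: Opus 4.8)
The plan is as follows. By hypothesis there are infinitely many integers $k$ with $\dd(k)\ge 10^7$, and it suffices to construct, for each such $k$, a graph $G\in\mathcal{I}_{r,\dd}$ with parameter~$k$: any such graph has $|V_1|+|V_2|+|V_3|\ge(2+\beta)k$ vertices, so the constructed graphs have unboundedly many vertices and hence $\mathcal{I}_{r,\dd}$ is infinite. So fix such a $k$, take disjoint sets $V_1,V_2,V_3$ of the sizes prescribed by Definition~\ref{def:incubator}, and write $n=|V_3|=\beta k$ and $d=\beta\dd(k)^2-1$. Add no edges inside $V_1$, inside $V_2$, or between $V_1$ and $V_3$ (giving property~(iv)). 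For $G[V_1,V_2]$ take a disjoint union of $k$ stars, so that every vertex of $V_1$ has degree $1$ and every vertex of $V_2$ has degree $\ceil{r\sqrt{\beta}\dd(k)}$; this is biregular with $k\ceil{r\sqrt{\beta}\dd(k)}$ edges, satisfying~(ii). For $G[V_2,V_3]$ we need a biregular bipartite graph with $V_2$-degree $\beta\dd(k)^2$ and $V_3$-degree $\dd(k)^2$; since $k\cdot\beta\dd(k)^2=\beta k\cdot\dd(k)^2$ and, using $\dd(k)\le\sqrt k$, these degrees are at most $|V_3|=\beta k$ and $|V_2|=k$ respectively, such a simple graph exists (by, e.g., the Gale--Ryser theorem, or an explicit cyclic construction), giving~(iii). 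The only remaining task is to realise $G[V_3]$ as a $d$-regular small-set expander on $n$ vertices (property~(v)). Note $d\le\beta k-1=n-1$ (again using $\dd(k)\le\sqrt k$) and that $n=\beta k$ is even, since $\beta=26\ceil{r^2/(r-1)}$ is even; hence a simple $d$-regular graph on $n$ vertices exists.

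For $G[V_3]$ I distinguish two cases according to the size of $d$ relative to $n^{1/3}$. If $d\ge 2n^{1/3}$, then \emph{any} $d$-regular graph on $n$ vertices is a small-set expander: for $\emptyset\subsetneq S\subseteq V_3$ with $|S|\le n^{1/3}$, the number of edges of $G[V_3]$ between $S$ and $V_3\setminus S$ equals $d|S|-2|E(G[S])|\ge|S|(d-|S|+1)\ge|S|(d-n^{1/3})$, so this quantity divided by $|S|$ is at least $d-n^{1/3}\ge d/2\ge d/4$, as Definition~\ref{def:expander} demands.

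If instead $d<2n^{1/3}$, then in particular $d\le 2n^{2/3}$, so Theorem~\ref{thm:cgj} applies with $C_0=2$ and, say, $K=1$; its hypothesis $n\ge 7+K^2$ holds because $\dd(k)\ge 10^7$ forces $k\ge 10^{14}$, hence $n=\beta k\ge 26\cdot 10^{14}$. Consequently, with probability at least $1-n^{-1}>0$ — and therefore for at least one graph — there is a $d$-regular graph $G[V_3]$ on $n$ vertices with $\lambda(G[V_3])\le\alpha\sqrt d$, where $\alpha$ is the explicit constant of Theorem~\ref{thm:cgj} (with the above choices, $\alpha=689872$). By Theorem~\ref{thm:cheeger}, for $\emptyset\subsetneq S\subseteq V_3$ with $|S|\le n^{1/3}$, the number of edges of $G[V_3]$ between $S$ and $V_3\setminus S$, divided by $|S|$, is at least $(d-\lambda(G[V_3]))\,|V_3\setminus S|/n\ge(d-\alpha\sqrt d)(1-n^{-2/3})\ge\tfrac34(d-\alpha\sqrt d)$, using $|S|/n\le n^{-2/3}\le\tfrac14$. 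This is at least $d/4$ once $d\ge\tfrac94\alpha^2$, which holds because $\dd(k)\ge 10^7$ gives $d=\beta\dd(k)^2-1\ge 26\cdot 10^{14}-1$, far larger than the fixed constant $\tfrac94\alpha^2$. So $G[V_3]$ is a small-set expander in this case too, and $G\in\mathcal{I}_{r,\dd}$. Doing this for all the infinitely many $k$ with $\dd(k)\ge 10^7$ proves the theorem.

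The step I expect to be the main obstacle is the sparse case ($d<2n^{1/3}$): one must keep track of the explicit value of $\alpha$ determined by the chosen $C_0$ and $K$ in Theorem~\ref{thm:cgj}, verify that the lower bound $d\ge 26\cdot 10^{14}-1$ forced by $\dd(k)\ge 10^7$ comfortably exceeds $\tfrac94\alpha^2$, and check that all the size conditions in Theorems~\ref{thm:cgj} and~\ref{thm:cheeger} are met for the (necessarily very large) values of $k$ in play. The bipartite gadgets and the dense case are routine.
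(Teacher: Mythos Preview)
Your proof is correct and follows essentially the same approach as the paper: the same case split on $d$ versus $2n^{1/3}$, the same invocation of Theorem~\ref{thm:cgj} with $C_0=2$, $K=1$ followed by Theorem~\ref{thm:cheeger} in the sparse case, and the same observation that $\beta$ even guarantees a $d$-regular graph exists. You are slightly more explicit than the paper in verifying the biregular bipartite constructions for $G[V_1,V_2]$ and $G[V_2,V_3]$ (which the paper takes for granted), and your numerical threshold $d\ge\tfrac94\alpha^2$ differs cosmetically from the paper's $\alpha\sqrt{D}\le D/2$ due to a sharper estimate of $|V_3\setminus S|/|V_3|$, but both are comfortably met.
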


\begin{proof}
It suffices to prove that for all $k$ such that $\dd(k) \ge 10^{7}$, there exists a small-set expander $H_k$ on $\beta k$ vertices with degree $D = \beta \dd(k)^2 - 1$. 
Since $\dd(k) \le \sqrt{k}$, we have $0 \le D \le \beta k-1$. Since $\beta$ is even, it follows that there exists a regular graph with degree $D$ on $\beta k$ vertices. Let $H$ be a uniformly random such graph, and suppose 
that $\emptyset \subset S \subseteq V(H)$ satisfies $|S| \le (k\beta)^{1/3}$. 
		
\noindent {\bf Case 1.}\quad If $D \ge 2(k\beta)^{1/3}$, then we have
		\[
			|E(S,V(H)\setminus S)| \ge \sum_{v \in S}(d(v) - |S|) \ge D|S| - (k\beta)^{1/3}|S| > D|S|/4.
		\]
		Thus we may take $H_k = H$ with certainty.
		
\noindent {\bf Case 2.}\quad Suppose instead that $D \le  2(k\beta)^{1/3}$. 
Now apply Theorem~\ref{thm:cgj}
with $n=\beta k$, $d=D$, $C_0=2$ and $K=1$.
This shows that 
$\pr(\lambda(H) \le  \alpha \sqrt{d}) \ge 1 - 1/(\beta k)$.
If $\lambda(H) \leq \alpha \sqrt{d}$ then using Theorem~\ref{thm:cheeger}, 
$$ |E(S,V\setminus S)| \ge (D-  \alpha \sqrt{D} )\frac{|S|(k\beta-|S|)}{k\beta} \ge
(D-  \alpha \sqrt{D} ) \frac{|S|}{2}.$$
To show that $H$ is a small-set expander, we need only show that $\alpha \sqrt{D} \leq D/2$.
This follows  from the definitions of~$\alpha$ and $D$ using $\beta \geq 26$ and $b(k)\geq 10^{7}$.
Since $H$ is a small-set expander with non-zero probability, there exists
a small-set expander on $\beta k$ vertices with degree $D$, as required.
	\end{proof}

Theorem~\ref{lem:incubators-exist} shows that the set 
$\mathcal{I}_{r,\dd}$ is infinitely large, as long as there are infinitely many $k$ such that $\dd(k)\geq 10^{7}$.
This is sufficient for our purposes, since our goal is to show that
there are infinitely-many incubators, 
even with constant density.
However, the condition that $\dd(k) \geq 10^{7}$ is not necessary.
The lower bound could be weakened substantially by replacing the use of Theorem~\ref{thm:cgj} with the result of Friedman~\cite{Friedman}
when $b(k)<10^{7}$.

\section{Upper bounding the extinction probability of incubators}
\label{sec:main}
In this section, we prove Theorems~\ref{thm:incubator} and~\ref{thm:sparse-incubator}. 
For this, it will be useful to have a more formal definition of the Moran process, which defines some notation 
that we will use.
\begin{definition}\label{def:moran}
		Let $G = (V,E)$ be an $n$-vertex digraph, let $r > 1$, and let $x_0 \in V$. We define the \emph{Moran process $(X_t)_{t \ge 0}$ on $G$ with fitness $r$ and initial mutant $x_0$} inductively as follows, where all random choices are made independently. Let $X_0 = \{x_0\}$. For all $S \subseteq V$, let $W(S) = n + (r-1)|S|$. Given $X_t$ for some $t \ge 0$, we define $X_{t+1}$ as follows. Randomly choose a vertex $v_t \in V$ with distribution
		\[
			\pr(v_t = v) = 
				\begin{cases}
					r/W(X_t) & \textnormal{if }v \in X_t;\\ 
					1/W(X_t) & \textnormal{if }v \in V \setminus X_t.
				\end{cases}
		\]
		If $\dout(v_t) = 0$, then $X_{t+1} = X_t$. Otherwise, choose $w_t \in \Nout(v_t)$ uniformly at random. If $v_t \in X_t$, then $X_{t+1} = X_t \cup \{w_t\}$, and we say \emph{$v_t$ spawns a mutant onto $w_t$ at time $t+1$}. Otherwise, $X_{t+1} = X_t \setminus \{w_t\}$, and we say \emph{$v_t$ spawns a non-mutant onto $w_t$ at time $t+1$}.
	
		If there exists $t$ such that $X_t = \emptyset$, we say the process \emph{goes extinct} at time $t$, and if there exists $t$ such that $X_t = V$, we say the process \emph{fixates} at time $t$. In either case, we say the process \emph{absorbs} at time $t$.  \end{definition}
Note that Moran processes are discrete-time Markov chains, and that nothing happens if (e.g.) a mutant is spawned onto a mutant. The notation $v_1, v_2, \dots$ and $w_1, w_2, \dots$ is not used outside Definition~\ref{def:moran}. 

Incubators are defined in Definition~\ref{def:incubator} on page~\pageref{def:incubator}.
Whenever we discuss a specific graph $G=(V,E) \in \mathcal{I}_{r,\dd}$, we  will use the notation $V_1$, $V_2$, $V_3$, $k$ and $\beta$   from Definition~\ref{def:incubator} without explicitly redefining it.  
We use $\dd$ as shorthand for $\dd(k)$.
Because the final theorem assumes $\dd\geq b_0$
for a constant $b_0$, depending on~$r$,
it will do no harm to assume that $\dd$ is sufficiently large.
To avoid cluttering the notation, we assume $\dd\geq 6r$ everywhere, and
we mention it explicitly only if we use a stronger bound.  
For all $v \in V_1$, we write $\cc(v)$ for the (necessarily) unique neighbour of $v$ in $V_2$.

\subsection{Going from a mutant in $V_1$ to many mutants in $V_3$}

Our goal in this subsection is to prove Lemma~\ref{lem:phase-one}, which says that if the initial mutant is in $V_1$, then with probability at least roughly $1-1/b$ the process will eventually obtain $\floor{b^{1/3}}$ mutants in $V_3$.  To prove this lemma, we will couple the evolution of mutants in $V_3$ with the following Markov chain.

\begin{definition}\label{def:Y}
	Throughout the rest of the section, let $\yy = \floor{(k\beta)^{1/3}}$ and $r' = (1+r)/2$. Define $(\YY_t)_{t \ge 0}$ to be a discrete-time Markov chain with state space $\stateY = \{\mathsf{\fail},0,1,\dots,\yy+1\}$, initial state $0$, and the following transition matrix:
	\begin{alignat*}{8}
		&&p_{\fail,\fail} &= 1,&&&&&&\\
		&&p_{0,\fail} &= \frac{6}{r\beta^{1/2}\dd}, 
		&\quad p_{0,0} &= \left(1-\frac{6}{r\beta^{1/2}\dd}\right)\frac{1}{1+r'},
		&\quad p_{0,1} &= \left(1-\frac{6}{r\beta^{1/2}\dd}\right)\frac{r'}{1+r'},\\
		&\textnormal{for all }1 \le i \le \yy,\ &p_{i,\fail} &= \frac{10}{r\beta\dd^2},
		&\quad p_{i,i-1} &= \left(1 - \frac{10}{r\beta\dd^2}\right)\frac{1}{1+r'},
		&\quad p_{i,i+1} &= \left(1 - \frac{10}{r\beta\dd^2}\right)\frac{r'}{1+r'},\\
		&&p_{\yy+1,\yy+1} &= 1,&&&&&&
	\end{alignat*}
	and $p_{i,j} = 0$ for all other $i,j\in\stateY$.
\end{definition}

Our coupling is defined formally in Lemma~\ref{lem:Y-coupling} and will have the property that for all $i \ge 0$, if $Y_i \ne \mathsf{F}$, then there exists $t \ge 0$ such that $|X_t \cap V_3| \ge Y_i$. The ``failure state''~$F$
in the state space of~$Y$ is used in the coupling to capture the  possibility that the initial mutant (in~$V_1$) 
dies before the $\floor{b^{1/3}}$ mutants are obtained in~$V_3$.
Using the coupling, we prove  Lemma~\ref{lem:phase-one} by using standard techniques to show that $Y$ is likely to reach $\floor{b^{1/3}}$ before reaching the 
failure state $\mathsf{F}$. We next define some stopping times which will be important to the coupling. 
	
\begin{definition}\label{def:Ts}
Let 
\[\Tend = \min\{t \ge 0 \mid X_t \cap V_1 = \emptyset \textnormal{ or } |X_t \cap V_3| = \gamma+1\}.\]
Note that $\Tend$ is finite with probability~$1$.
Define $\Ti{0},\Ti{1},\dots$ recursively by $\Ti{0} = 0$ and
\[\Ti{i} = \min\left(\{\Tend\} \cup
\{t > \Ti{i-1} \mid X_t \cap V_3 \ne X_{t-1} \cap V_3 \}   \right).\]
\end{definition}
If $\Tend=0$ then $\Ti{0},\Ti{1},\dots$ are all~$0$.
Otherwise, with probability~$1$,
there is a $j$ such that 
$T_0 < \cdots <T_j$ and, for all $i\geq j$, $T_j=\Tend$. The $T_j$'s will be used as update times in our coupling, and $\Tend$ will be used as a decoupling time; if $T_j < \Tend$, then $Y_{j+1}$ will depend on $X_{T_{j+1}}$, and otherwise the two processes will evolve independently. Thus in order to construct the coupling, we will need assorted bounds on the behaviour of $X_{T_j}$ subject to $T_j < \Tend$.

We first deal with the case where $V_3$ contains at least one mutant at time $T_j < \Tend$. We require an upper bound on the probability that $X_{T_{j+1}} \cap V_1 = \emptyset$, and on the probability that $|X_{T_{j+1}} \cap V_3| = |X_{T_j} \cap V_3| - 1$. These bounds are given in Lemma~\ref{lem:non-empty-trans}. Proving them will require that $|X_{T_j} \cap V_3| \le \gamma$, which is true since $T_j < \Tend$. 
To simplify the presentation, we first do the relevant calculations
in the following two technical lemmas.

	\begin{lemma}\label{lem:non-empty-step}
		Let $t \ge 0$, let $M \subseteq V$, and suppose $1 \le |M \cap V_3| \le \gamma$. Then we have
		\begin{align*}
			\pr(|X_{t+1} \cap V_3| = |X_t \cap V_3| + 1 \mid X_t = M) 
			&\ge \frac{r|E(V_3 \cap M, V_3 \setminus M)|}{W(M)(\beta\dd^2+\dd^2-1)},\\
			\pr(|X_{t+1} \cap V_3| = |X_t \cap V_3| - 1 \mid X_t = M) 
			&\le \left(1 + \frac{5}{\beta}\right)\frac{|E(V_3 \cap M, V_3 \setminus M)|}{W(M)(\beta\dd^2+\dd^2-1)}.
		\end{align*}
		Moreover, if $M \cap V_1 \ne \emptyset$, then
		\[
			\pr(X_{t+1} \cap V_1 = \emptyset \mid X_t = M) 
			\le \frac{1}{W(M)\beta\dd^2}.
		\]
	\end{lemma}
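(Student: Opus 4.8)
The plan is to prove each of the three bounds by a one-step analysis of the Moran process, using only the structure imposed by Definition~\ref{def:incubator}. First I would record out-degrees. Every vertex of $V_1$ has exactly one out-neighbour, and it lies in $V_2$: this is because $G[V_1]$ and $G[V_1,V_3]$ are empty while $G[V_1,V_2]$ is biregular with $|V_1|$ edges. By biregularity of $G[V_2,V_3]$, every vertex of $V_2$ has $\ceil{r\sqrt{\beta}\dd}$ out-neighbours in $V_1$ and $\beta\dd^2$ out-neighbours in $V_3$, so $\dout(v)\ge\beta\dd^2$; and every vertex of $V_3$ has $\dd^2$ out-neighbours in $V_2$ and $\beta\dd^2-1$ out-neighbours in $V_3$, so $\dout(v)=\beta\dd^2+\dd^2-1$. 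In particular $V_3$ has no out-neighbours in $V_1$.

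For the first inequality I would lower-bound the probability of an increase by discarding every contribution except the event that $v_t$ is a mutant of $V_3\cap M$ and $w_t$ is a non-mutant neighbour of $v_t$ in $V_3\setminus M$. There are exactly $|E(V_3\cap M,V_3\setminus M)|$ ordered pairs $(v,w)$ with $v\in V_3\cap M$, $w\in V_3\setminus M$ and $vw\in E$; each is realised as $(v_t,w_t)$ with probability $\tfrac{r}{W(M)}\cdot\tfrac{1}{\beta\dd^2+\dd^2-1}$ (since $v$ is a mutant with $\dout(v)=\beta\dd^2+\dd^2-1$), and these events are disjoint, so the claimed bound follows.

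For the second inequality, $|X_{t+1}\cap V_3|$ can drop by one only if $v_t$ is a non-mutant, $w_t\in V_3\cap M$, and $v_tw_t\in E$; since $V_3$ has no neighbours in $V_1$, this forces $v_t\in(V_2\cup V_3)\setminus M$. The pairs with $v_t\in V_3\setminus M$ contribute exactly $|E(V_3\cap M,V_3\setminus M)|/(W(M)(\beta\dd^2+\dd^2-1))$ as before, and the pairs with $v_t\in V_2\setminus M$ contribute at most $|E(V_2\setminus M,V_3\cap M)|/(W(M)\beta\dd^2)$, since such a $v_t$ is a non-mutant of out-degree at least $\beta\dd^2$. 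The crux is controlling $|E(V_2\setminus M,V_3\cap M)|$: each vertex of $V_3\cap M$ has exactly $\dd^2$ neighbours in $V_2$, so this quantity is at most $\dd^2\,|V_3\cap M|$; and since $1\le|V_3\cap M|\le\gamma\le(\beta k)^{1/3}=|V_3|^{1/3}$ and $G[V_3]$ is a small-set expander of degree $\beta\dd^2-1$, Definition~\ref{def:expander} with $S=V_3\cap M$ gives $|V_3\cap M|\le 4|E(V_3\cap M,V_3\setminus M)|/(\beta\dd^2-1)$. Substituting, the decrease probability is at most $\tfrac{|E(V_3\cap M,V_3\setminus M)|}{W(M)}\bigl(\tfrac{1}{\beta\dd^2+\dd^2-1}+\tfrac{4}{\beta(\beta\dd^2-1)}\bigr)$, and since $\beta\ge 26$ a short calculation (the required inequality reduces to $\beta\ge 4+\dd^{-2}$) shows this is at most $(1+5/\beta)\,|E(V_3\cap M,V_3\setminus M)|/(W(M)(\beta\dd^2+\dd^2-1))$.

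For the third inequality, note that since at most one vertex changes state per step, $X_{t+1}\cap V_1=\emptyset$ is possible only when $M\cap V_1=\{u\}$ for a single vertex $u$ and that step overwrites $u$ with a non-mutant. The only in-neighbour of $u$ is $\cc(u)\in V_2$, so this requires $v_t=\cc(u)$, $\cc(u)\notin M$, and $w_t=u$, which has probability at most $\tfrac{1}{W(M)}\cdot\tfrac{1}{\dout(\cc(u))}\le\tfrac{1}{W(M)\beta\dd^2}$. I expect the main obstacle to be the bookkeeping in the second bound: the key realisation is that the small-set expansion of $G[V_3]$ is exactly what lets the ``parasitic'' $V_2$-to-$V_3$ back-spawns be charged against the genuine $V_3$-boundary $E(V_3\cap M,V_3\setminus M)$, and one must check that $\beta$ is chosen large enough for the error factor $5/\beta$ to absorb the resulting constant; the other two bounds are routine edge counts.
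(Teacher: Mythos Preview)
Your proof is correct and follows essentially the same approach as the paper's. The only cosmetic difference is in the bookkeeping for the second bound: the paper first shows $\frac{|E(M',V_3\setminus M')|}{\beta\dd^2+\dd^2-1}\ge |M'|/5$ and $\frac{|E(V_2,M')|}{\beta\dd^2+\ceil{r\beta^{1/2}\dd}}\le |M'|/\beta$ separately and then combines them, while you substitute directly and reduce to the clean sufficient condition $\beta\ge 4+\dd^{-2}$; both routes use the small-set expansion of $G[V_3]$ in exactly the same way.
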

	\begin{proof}
		For brevity, write $M' = M \cap V_3$. For the first equation, note that $|X_t \cap V_3|$ increases whenever a vertex in $M'$ spawns a mutant onto a vertex in $V_3 \setminus M'$. We therefore have
		\begin{align*}
			\pr(|X_{t+1} \cap V_3| = |X_t \cap V_3| + 1 \mid X_t = M) 
			&\ge \sum_{v \in M'}\frac{r}{W(M)}\cdot \frac{|N(v) \cap (V_3 \setminus M')|}{d(v)}\\
			&= \frac{r|E(M',V_3\setminus M')|}{W(M)(\beta\dd^2+\dd^2-1)},
		\end{align*}
		as required.

		For the second equation, note that $|X_t \cap V_3|$ decreases precisely when a vertex in $(V_2 \cup V_3)\setminus M$ spawns a non-mutant onto a vertex in $M'$. It follows that
		\begin{align*}
			\pr(|X_{t+1} \cap V_3| = |X_t \cap V_3| - 1 \mid X_t = M) 
			&= \sum_{v \in (V_2\cup V_3) \setminus M}\frac{1}{W(M)}\cdot\frac{|N(v) \cap M'|}{d(v)}\\
			&\le \frac{|E(V_2,M')|}{W(M)(\beta\dd^2+\ceil{r\beta^{1/2}\dd})}+\frac{|E(M', V_3 \setminus M')|}{W(M)(\beta\dd^2+\dd^2-1)}.
		\end{align*}
		Recall from Definition~\ref{def:incubator}(v) that $G[V_3]$ is a small-set expander, and $1 \le |M'| \le \gamma \le |V_3|^{1/3}$ by hypothesis, so since $\beta \ge 26$ we have 
\[
\frac{|E(M',V_3\setminus M')|}{\beta\dd^2+\dd^2-1} \ge 
\frac{(\beta b^2-1)|M'|}{4(\beta b^2+b^2-1)}\geq  \frac{|M'|}{5}.
\]
		Moreover, since every vertex in $V_3$ has degree $\dd^2$ into $V_2$,
		\[
		\frac{|E(V_2,M')|}{\beta\dd^2+\ceil{r\beta^{1/2}\dd}} \le \frac{\dd^2|M'|}{\beta\dd^2} = \frac{|M'|}{\beta}.
		\]
		It follows that
		\[
			\pr(|X_{t+1} \cap V_3| = |X_t \cap V_3| - 1 \mid X_t = M)
			\le \left(1 + \frac{5}{\beta}\right)\frac{|E(M',V_3\setminus M')|} {W(M)(\beta\dd^2+\dd^2-1)},
		\]
		as required. 
 		
For the third equation, recall that, by hypothesis, 
$M \cap V_1 \ne \emptyset$. 
For brevity,
write $p = \pr(X_{t+1} \cap V_1 = \emptyset \mid X_t = M)$. Note that if $|M \cap V_1| > 1$ then $p = 0$.
Suppose instead that $M \cap V_1 = \{v_0\}$ for some $v_0 \in V$. Note that $v_0$ can only become a non-mutant if its unique neighbour $c(v_0)$ spawns a non-mutant onto it. Thus, if $c(v_0) \in M$, $p = 0$. If $c(v_0) \notin M$, we have
		\[
			p = \frac{1}{W(M)}\cdot\frac{1}{\ceil{r\beta^{1/2}\dd}+\beta\dd^2} \le \frac{1}{W(M)\beta\dd^2}.
		\]
		The desired inequality therefore holds in all cases.
	\end{proof}

\begin{lemma}\label{lem:beta-algebra}
$$ \frac{1+5/\beta}{r+1+5/\beta} \leq \frac{1}{1+r'}- \frac{10}{r \beta b^2}.$$
\end{lemma}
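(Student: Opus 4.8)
The plan is to turn the inequality into an elementary one-variable inequality in $r$ and then bound each side by hand. Since $r' = (1+r)/2$, we have $1+r' = (r+3)/2$, so $\frac{1}{1+r'} = \frac{2}{r+3}$, and the claim becomes
\[
\frac{1+5/\beta}{r+1+5/\beta} + \frac{10}{r\beta b^2} \le \frac{2}{r+3}.
\]
I would compare both sides to the ``$\beta=\infty$'' value $\frac{1}{r+1}$. On the left, writing $\epsilon = 5/\beta \ge 0$, the identity $(1+\epsilon)(r+1)-(r+1+\epsilon)=\epsilon r$ gives
\[
\frac{1+\epsilon}{r+1+\epsilon} - \frac{1}{r+1} = \frac{\epsilon r}{(r+1+\epsilon)(r+1)} \le \frac{\epsilon r}{(r+1)^2},
\]
while on the right $\frac{2}{r+3} - \frac{1}{r+1} = \frac{r-1}{(r+3)(r+1)}$, which is positive precisely because $r>1$; this is the slack to exploit. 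Hence it suffices to prove $\frac{5r}{\beta(r+1)^2} + \frac{10}{r\beta b^2} \le \frac{r-1}{(r+3)(r+1)}$.

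Next I would feed in the two quantitative inputs: $\beta = 26\ceil{r^2/(r-1)} \ge 26 r^2/(r-1)$, so $1/\beta \le (r-1)/(26 r^2)$, and the standing assumption $b \ge 6r$, so $b^2 \ge 36 r^2$. Substituting these makes a factor $(r-1)$ appear in both left-hand terms, cancelling against the right-hand side; after dividing through by $r-1>0$ the inequality reduces to
\[
\frac{5}{26 r(r+1)^2} + \frac{5}{468 r^5} \le \frac{1}{(r+3)(r+1)}.
\]
Using the crude bounds $r+3 \le 2(r+1)$ and $r \ge 1$ (so $\frac{5}{26r(r+1)^2} \le \frac{5}{26(r+1)^2}$ and $\frac{1}{(r+3)(r+1)} \ge \frac{1}{2(r+1)^2}$), this follows once $\frac{5}{468 r^5} \le \bigl(\tfrac12 - \tfrac{5}{26}\bigr)\frac{1}{(r+1)^2} = \frac{4}{13(r+1)^2}$, i.e. $\frac{65(r+1)^2}{1872} \le r^5$; and since $(r+1)^2 \le 4r^2$ for $r \ge 1$, this is implied by $\frac{65}{468} \le r^3$, which holds for all $r \ge 1$.

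I do not expect a genuine obstacle: this lemma is a pure calculation, and the only thing to watch is that the slack is apportioned correctly. The $r$-dependent lower bound $\beta \ge 26 r^2/(r-1)$ is what keeps everything under control as $r \to 1^+$, where the right-hand slack $\frac{r-1}{(r+3)(r+1)}$ vanishes, while the bound $b \ge 6r$ handles the second left-hand term for large $r$. The estimates above show the final margin is comfortable, so none of the crude bounds ($r+3\le 2(r+1)$, $(r+1)^2\le 4r^2$) needs to be sharpened, and there is room to spare even if one only used $b \ge 6r$ without any extra assumption.
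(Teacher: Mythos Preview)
Your proof is correct and follows essentially the same route as the paper: both compare the two sides of the inequality to the common pivot $\frac{1}{r+1}$, then use $\beta \ge 26r^2/(r-1)$ and the lower bound on $b$ to control the resulting differences. The paper uses only $b\ge 6$ (rather than your $b\ge 6r$) and slightly different intermediate constants, but the structure of the argument is the same.
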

\begin{proof} 
 $$
  \frac{1+5/\beta}{r+1+5/\beta}
\le \frac{1+5(r-1)/(26r^2)}{1+r} \le \frac{1}{1+r} + \frac{r-1}{5r^2(r+1)}.
$$
Moreover, since $r'=(r+1)/2$ and $\dd \ge 6$ we have
\begin{align*}
\frac{1}{1+r'} - \frac{1}{1+r} - \frac{10}{r\beta\dd^2}
&\ge \frac{2}{3+r} - \frac{1}{1+r} - \frac{10(r-1)}{900r^3}
\ge \frac{r-1}{(r+1)(r+3)} - \frac{r-1}{90r^2}\\
&\ge \frac{r-1}{(r+1)(r+3)} - \frac{4(r-1)}{45(r+1)(r+3)}\\
&= \frac{41(r-1)}{45(r+1)(r+3)}
\ge \frac{41(r-1)}{180r^2(r+1)} > \frac{r-1}{5r^2(r+1)}.
\end{align*} 
\end{proof}

	\begin{lemma}\label{lem:non-empty-trans}
		Let $t \ge 0$, let $j \ge 0$, let $M \subseteq V$, and suppose $1 \le |M \cap V_3| \le \gamma$ and $M \cap V_1 \ne \emptyset$. Then we have
		\begin{align*}
			\pr(X_{\Ti{j+1}} \cap V_1 = \emptyset \mid X_t = M, \Ti{j} = t \ne \Tend) &\le p_{1,\mathsf{F}}, \mbox{and}\\
			\pr(|X_{\Ti{j+1}} \cap V_3| = |X_{\Ti{j}} \cap V_3| - 1 \mid X_t = M, \Ti{j} = t \ne \Tend) &\le p_{1,0}.\\
		\end{align*}
	\end{lemma}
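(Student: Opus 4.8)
The plan is to condition on $X_t = M$ with $\Ti{j} = t \ne \Tend$, and to analyse the evolution of the Moran process over the single "epoch" from time $t$ until the next time $\Ti{j+1}$ that $X \cap V_3$ changes (or $\Tend$ is reached). The key observation is that $X \cap V_3$ changes exactly when some vertex spawns across the boundary of $M' := M \cap V_3$ inside $V_3$ (increasing $|X \cap V_3|$) or when some vertex in $(V_2 \cup V_3) \setminus M$ spawns a non-mutant onto $M'$ (decreasing it); also, by hypothesis $1 \le |M'| \le \gamma$ and $M \cap V_1 \ne \emptyset$, so the bounds of Lemma~\ref{lem:non-empty-step} apply at time $t$. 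However, the process may take several steps in which \emph{nothing relevant happens} (e.g.\ activity entirely within $V_1$, or a mutant spawned onto a mutant, or a step that changes $X \cap V_1$ without emptying it) before the epoch ends; the subtlety is that these intermediate steps can change $M$, and in particular can change $|M \cap V_1|$. First I would argue that $|X \cap V_3|$ is unchanged throughout $(t, \Ti{j+1})$ by definition of the $T_i$, and that $X \cap V_3 = M'$ on this whole interval, so the "small-set expander" estimates for the boundary $|E(M', V_3 \setminus M')|$ from Lemma~\ref{lem:non-empty-step} remain valid at every such intermediate step.

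Next I would set up a competition/absorption argument. Consider the discrete-time process restricted to the steps that are "relevant": a step is relevant if it either empties $X \cap V_1$, or changes $|X \cap V_3|$. I want to bound the probability that the first relevant event is "$X \cap V_1$ becomes empty" and, separately, that the first relevant change to $|X \cap V_3|$ is a decrease. The clean way is: at any intermediate configuration $N$ (with $N \cap V_3 = M'$ still, and $N \cap V_1 \neq \emptyset$ if we have not yet failed), Lemma~\ref{lem:non-empty-step} gives, writing $q_+ = \pr(\text{increase})$, $q_- = \pr(\text{decrease})$, $q_\fail = \pr(X \cap V_1 = \emptyset)$, the bounds $q_+ \ge \tfrac{r|E(M',V_3\setminus M')|}{W(N)(\beta\dd^2+\dd^2-1)}$, $q_- \le (1+5/\beta)\tfrac{|E(M',V_3\setminus M')|}{W(N)(\beta\dd^2+\dd^2-1)}$, and $q_\fail \le \tfrac{1}{W(N)\beta\dd^2}$. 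Crucially $q_+ \ge \tfrac{r}{1+5/\beta}\,q_-$ and $q_+ \ge \tfrac{r|E(M',V_3\setminus M')|}{W(N)(\beta\dd^2+\dd^2-1)} \ge \tfrac{r \cdot (|M'|/5)}{W(N)}$ (using the small-set-expander bound from the proof of Lemma~\ref{lem:non-empty-step}), while $q_\fail \le \tfrac{1}{W(N)\beta\dd^2}$; since $|M'| \ge 1$ this makes $q_\fail$ tiny compared to $q_+$. Because these one-step ratio bounds hold \emph{uniformly} over every intermediate configuration $N$ reachable before the epoch ends, a standard gambler's-ruin / first-step-decomposition argument shows that the probability the epoch ends in failure is at most $q_\fail / (q_\fail + q_+ + q_-)$ maximised over the relevant $N$, and similarly the probability it ends in a decrease of $|X \cap V_3|$ is at most $q_- / (q_\fail + q_+ + q_-)$.

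Then I would just verify the arithmetic. For the first inequality, the failure probability is at most $\sup_N \tfrac{q_\fail}{q_\fail + q_+ + q_-}$; plugging in the bounds (and $|E(M',V_3\setminus M')|/(\beta\dd^2+\dd^2-1) \ge |M'|/5 \ge 1/5$ from Lemma~\ref{lem:non-empty-step}), one gets a quantity of order $1/(\beta \dd^2)$, which is $\le \tfrac{10}{r\beta\dd^2} = p_{1,\fail}$ for $\dd \ge 6r$; I would keep track of constants to land below $10$. For the second inequality, the probability of a decrease is at most $\sup_N \tfrac{q_-}{q_- + q_+ + q_\fail} \le \tfrac{q_-}{q_- + q_+} = \tfrac{1}{1 + q_+/q_-} \le \tfrac{1}{1 + r/(1+5/\beta)} = \tfrac{1+5/\beta}{r + 1 + 5/\beta}$, and by Lemma~\ref{lem:beta-algebra} this is at most $\tfrac{1}{1+r'} - \tfrac{10}{r\beta\dd^2} \le \tfrac{1}{1+r'} = p_{1,0}$ (in fact the slack in Lemma~\ref{lem:beta-algebra} is exactly what one needs if one wants the sharper bound with the $q_\fail$ term folded in).

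The main obstacle I expect is handling the intermediate steps rigorously: one must be careful that $|M \cap V_1|$ may fluctuate (a spawn within $V_1$ or from $V_2$ into $V_1$ can create a second $V_1$-mutant, after which $q_\fail = 0$ until it drops back to one), and that the relevant one-step bounds of Lemma~\ref{lem:non-empty-step} were stated only for configurations with $1 \le |M \cap V_3| \le \gamma$, so I need to confirm that \emph{every} configuration visited strictly before $\Ti{j+1}$ still satisfies this (it does, since $X \cap V_3$ is frozen at $M'$). Once that invariance is pinned down, the uniform ratio bounds make the gambler's-ruin comparison routine, and the constants work out with room to spare given $\dd \ge 6r$ and $\beta \ge 26$.
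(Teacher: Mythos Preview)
Your proposal is correct and follows essentially the same route as the paper: condition on the state $M'$ and time just before $\Ti{j+1}$, observe that $X\cap V_3$ is frozen at $M\cap V_3$ throughout the epoch so Lemma~\ref{lem:non-empty-step} applies at every intermediate configuration, and then bound the conditional probabilities $q_{\fail}/(q_{\fail}+q_++q_-)$ and $q_-/(q_{\fail}+q_++q_-)$ uniformly using the expander lower bound on $|E(M',V_3\setminus M')|$ and Lemma~\ref{lem:beta-algebra}. One small slip: $p_{1,0}$ is $(1-\tfrac{10}{r\beta\dd^2})\tfrac{1}{1+r'}$, not $\tfrac{1}{1+r'}$; but as you yourself note parenthetically, the slack in Lemma~\ref{lem:beta-algebra} is precisely what absorbs the extra $(1-p_{1,\fail})$ factor, so the argument goes through unchanged.
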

	
	\begin{proof}
Write $\filt$ for the event that $X_t = M$ and $\Ti{j} = t \ne \Tend$. Throughout, suppose that $\filt$ occurs. First note that this implies
		\begin{equation}\label{eqn:non-empty-trans-Ti}
			\Ti{j+1} = \min\{t' > t \mid X_{t'} \cap V_3 \ne X_t \cap V_3\textnormal{ or }X_{t'} \cap V_1 = \emptyset\}.
		\end{equation}
Now consider some  non-negative integer $i$. For $M' \subseteq V$, let
\[
q_i^{M'} = \pr(\Ti{j+1} = t+i+1 \textnormal{ and } X_{t+i} = M' \mid \filt).
\]
Let $\mathcal{M}_i = \{M' \subseteq V \mid q_i^{M'} \ne 0\}$. For all $M' \in \mathcal{M}_i$, let
		\begin{align*}
			p_{\mathsf{down},i}^{M'} &= \pr(|X_{t+i+1} \cap V_3| = |X_{t+i} \cap V_3| - 1 \mid X_{t+i} = M', \Ti{j+1} = t+i+1, \filt),\\
			p_{\mathsf{fail},i}^{M'} &= \pr(X_{t+i+1} \cap V_1 = \emptyset \mid X_{t+i} = M', \Ti{j+1} = t+i+1, \filt).
		\end{align*}
Note that if $M' \in \mathcal{M}_i$, then $M' \cap V_1 \ne \emptyset$. Also, $M' \cap V_3 = M \cap V_3$, so $1 \le |M' \cap V_3| \le \gamma$.
If $X_{t+i} = M'$, then the three events $|X_{t+i+1} \cap V_3| = |M' \cap V_3|+1$, $|X_{t+i+1} \cap V_3| = |M' \cap V_3|-1$ and $X_{t+i+1} \cap V_1 = \emptyset$ are disjoint, and (by \eqref{eqn:non-empty-trans-Ti}) conditioning on $\Ti{j+1} = t+i+1$ is precisely equivalent to conditioning on one of the three events occurring. For brevity, write $x(M') = |E(V_3 \cap M', V_3 \setminus M')|$ and $y = \beta\dd^2 + \dd^2 - 1$. It follows by Lemma~\ref{lem:non-empty-step} (with Lemma~\ref{lem:non-empty-step}'s $t$ equal to our $t+i$ and Lemma~\ref{lem:non-empty-step}'s $M$ equal to our $M'$) that for all $M' \in \mathcal{M}_i$,
		\begin{align*}
			p_{\mathsf{fail},i}^{M'}
			&\le \frac{1/(W(M')\beta\dd^2)}{rx(M')/(W(M')y)}
			= \frac{y}{rx(M')\beta\dd^2}.
		\end{align*}
		Recall that $G[V_3]$ is a small-set expander and, for $M' \subseteq \mathcal{M}_i$, $|M' \cap V_3| = |M \cap V_3| \le \gamma \le |V_3|^{1/3}$, and so 
		\[
			x(M') \ge (\beta\dd^2-1)|M' \cap V_3|/4 \ge \beta\dd^2|M\cap V_3|/5 \ge \beta\dd^2/5.
		\]
		Moreover, $y \le 2\beta\dd^2$. It follows that
		\begin{equation}\label{eqn:non-empty-trans-fail}
			p_{\mathsf{fail},i}^{M'} 
			\le \frac{10}{r\beta\dd^2}
			 = p_{1,\mathsf{F}}.
		\end{equation}
		
Similarly, it follows by Lemma~\ref{lem:non-empty-step} 
and Lemma~\ref{lem:beta-algebra} that 
\begin{align}\nonumber
p_{\mathsf{down},i}^{M'} 
&\le \frac{(1+5/\beta)x(M')/(W(M') y)}{rx(M')/(W(M') y) + (1+5/\beta)x(M')/(W(M') y)} \\\label{eqn:non-empty-trans-down}
&= \frac{1+5/\beta}{r+1+5/\beta}  \le \left(1 - \frac{10}{r\beta\dd^2}\right) \frac{1}{1+r'} = p_{1,0}.
\end{align}

		Now, by the law of total probability and \eqref{eqn:non-empty-trans-fail}, we have
		\[
			\pr(X_{\Ti{j+1}} \cap V_1 = \emptyset \mid \filt) 
			= \sum_{i \geq 0}\sum_{M' \in \mathcal{M}_i}p_{\mathsf{fail},i}^{M'}q_i^{M'}
			\le p_{1,\mathsf{F}}\sum_{i \geq 0}\sum_{M' \in \mathcal{M}_i}q_i^{M'}
			= p_{1,\mathsf{F}},
		\]
		and so the first equation of the lemma statement follows. Similarly, the second equation follows from \eqref{eqn:non-empty-trans-down}.
	\end{proof}

In Lemma~\ref{lem:non-empty-trans}, we gave the necessary bounds to ensure that our coupling works when $V_3$ contains at least one mutant at time $T_j < \Tend$. We now do the same when $V_3$ contains no mutants. Since in this case $T_{j+1}$ is the first time after $T_j$ at which $V_3$ contains a mutant or $V_1$ contains no mutants, we must show that $V_3$ is likely to gain a mutant after $T_j$. 
For this we will need the fact that $X_{T_j} \cap V_1 \ne \emptyset$, which holds because $T_j < \Tend$. We first prove the following ancillary lemma.

	\begin{lemma}\label{lem:empty-step}
		Let $t \ge 0$, let $M \subseteq V$, and suppose $M \cap V_1 \ne \emptyset$. Then there exists a stopping time $\Tpp > t$ such that the following hold:
		\begin{enumerate}[(i)]
			\item $\pr(\Tpp < \infty \mid X_t = M) = 1$;
			\item $\pr(X_{t'} \cap V_1 = \emptyset \textnormal{ for some }t < t' \le \Tpp \mid X_t = M) \le  1/(r\beta\dd^2)$;
			\item $\pr(|X_{\Tpp} \cap V_3| \ge 1 \mid X_t = M) \ge 1/(6\beta^{1/2}\dd)$.
		\end{enumerate}
	\end{lemma}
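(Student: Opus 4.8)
The plan is to realise $\Tpp$ as the end of a single bounded ``attempt'' to push a mutant from $V_1$ through $V_2$ into $V_3$. Fix a vertex $v_0\in M\cap V_1$ and set $c_0=\cc(v_0)\in V_2$; recall that $v_0$'s only neighbour is $c_0$, that $c_0$ has $\ceil{r\beta^{1/2}\dd}$ neighbours in $V_1$ (each of degree $1$) and $\beta\dd^2$ neighbours in $V_3$, and that $d(c_0)=\ceil{r\beta^{1/2}\dd}+\beta\dd^2\ge\beta\dd^2$. If $M\cap V_3\ne\emptyset$, take $\Tpp=t+1$: then (i) is trivial; (ii) holds because $v_0$ can die in one step only if $c_0$ spawns onto it, of probability at most $1/(W(M)d(c_0))\le 1/(r\beta\dd^2)$ (using $d(c_0)\ge\beta\dd^2$ and $W(M)\ge r$), while $V_1$ can empty in one step only if $v_0$ dies; and (iii) holds because a single $V_3$-mutant survives one step with probability at least $1-2/W(M)\ge 1/2\ge 1/(6\beta^{1/2}\dd)$ (here $n\ge\beta k\ge\beta\dd^2\ge 900$). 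So assume from now on that $M\cap V_3=\emptyset$. \emph{Stage~1} runs until $\Tp$, the first time after $t$ at which $c_0$ is a mutant, or $V_1$ is empty, or $V_3$ is nonempty (so $\Tp=t$ if $c_0\in M$). If $X_{\Tp}\cap V_1=\emptyset$ or $X_{\Tp}\cap V_3\ne\emptyset$ we stop, setting $\Tpp=\Tp$; otherwise $c_0\in X_{\Tp}$ and \emph{Stage~2} runs until $\Tpp$, the first time after $\Tp$ at which $V_3$ is nonempty, or $V_1$ is empty, or $c_0$ is not a mutant. Making Stage~2 abort as soon as $c_0$ dies is what lets the bound in (ii) be as small as $1/(r\beta\dd^2)$: the caller chains roughly $\beta^{1/2}\dd$ independent applications of this lemma, so it suffices for the per-attempt success probability in (iii) to be of order $1/(\beta^{1/2}\dd)$, while (ii) must be smaller by a $\Theta(\beta^{1/2}\dd)$ factor. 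Part~(i) is immediate: the Moran process absorbs almost surely (at fixation $V_3\ne\emptyset$, at extinction $V_1=\emptyset$), so each stage terminates a.s.

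The two quantitative estimates are race arguments in which the relevant ratio of one-step probabilities does not depend on $W(X_s)$, hence not on the current configuration. \emph{Stage~1.} Throughout it we have $V_3=\emptyset$, so $c_0$ becomes a mutant only when a mutant $V_1$-neighbour of $c_0$ is selected, which — counting just $v_0$, as long as it remains a mutant — has probability at least $r/W(X_s)$; whereas $v_0$, whose sole neighbour is $c_0$, dies only when $c_0$ is selected and spawns onto $v_0$, of probability $1/(W(X_s)d(c_0))$. Since $V_1$ can empty during Stage~1 only if $v_0$ dies during Stage~1, comparing these rates (discarding the other, favourable, ways Stage~1 can end) yields
\[
\pr\bigl(X_{t'}\cap V_1=\emptyset\text{ for some }t<t'\le\Tp\mid X_t=M\bigr)\ \le\ \frac{1}{r\,d(c_0)+1}\ \le\ \frac{1}{r\beta\dd^2}.
\]
\emph{Stage~2.} While it is ongoing all $\beta\dd^2$ of $c_0$'s $V_3$-neighbours are non-mutants, so $c_0$ spawns a mutant into $V_3$ with probability $\frac{r}{W(X_s)}\cdot\frac{\beta\dd^2}{d(c_0)}\ge\frac{r}{2W(X_s)}$ (using $\ceil{r\beta^{1/2}\dd}\le\beta\dd^2$, valid since $\dd\ge6r$ and $\beta\ge26$), whereas $c_0$ becomes a non-mutant only when a non-mutant neighbour of $c_0$ is selected and spawns onto it, of probability at most $\frac{1}{W(X_s)}\bigl(\ceil{r\beta^{1/2}\dd}+\frac{\beta\dd^2}{\beta\dd^2+\dd^2-1}\bigr)\le\frac{r\beta^{1/2}\dd+2}{W(X_s)}$. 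Hence Stage~2 ends with $V_3$ nonempty with probability at least $\frac{r/2}{r/2+r\beta^{1/2}\dd+2}\ge\frac{1}{3\beta^{1/2}\dd}$ (using $r\beta^{1/2}\dd\ge r+4$).

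Assembling the pieces: for (ii), during Stage~2 the vertex $v^*\in X_{\Tp}\cap V_1$ whose spawn turned $c_0$ into a mutant (take $v^*=v_0$ when $c_0\in M$) has its unique neighbour $c_0$ a mutant and so survives all of Stage~2; thus $V_1$ can empty on $(t,\Tpp]$ only during Stage~1, and the displayed Stage~1 bound is exactly (ii). For (iii), Stage~1 ends with $V_1=\emptyset$ with probability at most $1/(r\beta\dd^2)$, so with probability at least $1-1/(r\beta\dd^2)$ it ends either with $V_3$ already nonempty (success) or with $c_0$ a mutant and $V_3$ empty, and in the latter case Stage~2 then succeeds with probability at least $1/(3\beta^{1/2}\dd)$ by the Markov property applied at $\Tp$; therefore $\pr(|X_{\Tpp}\cap V_3|\ge 1\mid X_t=M)\ge\bigl(1-\tfrac{1}{r\beta\dd^2}\bigr)\cdot\tfrac{1}{3\beta^{1/2}\dd}\ge\tfrac{1}{6\beta^{1/2}\dd}$.

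The main obstacle is not the arithmetic but getting the two-stage scaffold right: one must see that $\Tpp$ has to span both stages (after Stage~1 alone $V_3$ cannot yet contain a mutant, since no $V_2$-mutant existed to feed it) while Stage~2 must be allowed to give up, so that the $1/(r\beta\dd^2)$ bound in (ii) — far below the $1/(6\beta^{1/2}\dd)$ of (iii) — can hold; and one must check that mutants elsewhere in $M$ never spoil the per-step comparisons, which they do not, since those comparisons involve only the neighbourhoods of $v_0$ and $c_0$ and the emptiness of $V_3$, and any other evolution merely ends a stage earlier and in our favour. The residual work — the race-argument bookkeeping and the numerical inequalities from $\dd\ge6r$ and $\beta\ge26$ — is routine.
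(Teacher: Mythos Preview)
Your proof is correct and follows essentially the same two-stage approach as the paper: wait for $c(v_0)$ to become a mutant (Stage~1) and then for $c(v_0)$ to spawn into $V_3$ (Stage~2), with a race argument at each stage giving the bounds $1/(r\beta\dd^2)$ and $\Theta(1/(\beta^{1/2}\dd))$ respectively. The differences are cosmetic --- you define the stopping times via states ($c_0$ mutant, $V_1$ empty, $V_3$ nonempty) rather than via specific spawn events as the paper does, and you case explicitly on $M\cap V_3\ne\emptyset$ and $c_0\in M$ where the paper's event-based definitions handle these uniformly.
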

	\begin{proof}
		Let $v_0 \in V_1 \cap M$ be arbitrary, and recall that $c(v_0)$ is the unique neighbour of $v_0$. Let $\Tp$ be the minimum $t' > t$ such that at time $t'$, either $v_0$ spawns or $c(v_0)$ spawns a non-mutant onto $v_0$. Let $\Tpp$ be the minimum $t' > \Tp$ such that at time $t'$, either $c(v_0)$ spawns onto some vertex in $V_3$ or a neighbour of $c(v_0)$ spawns a non-mutant onto $c(v_0)$. Note that since each vertex in $V$ spawns infinitely often with probability 1, (i) holds.
		
		For all $i \geq 0$ and all $M_i \subseteq V$ with $v_0 \in M_i$, we have
		\begin{align*}
			\pr(v_0 \textnormal{ spawns at }t+i+1 \mid X_{t+i}=M_i) &= r/W(M_i),\\
			\pr(c(v_0)\textnormal{ spawns a non-mutant onto }v_0\textnormal{ at }t+i+1 \mid X_{t+i} = M_i) &\le 1/(\beta\dd^2W(M_i)).
		\end{align*}
		Write $p_i^{M'} = \pr(\Tp = t+i+1, X_{t+i} = M' \mid X_t = M)$. Then since $v_0 \in V_1 \cap M$ and $v_0$ remains a mutant throughout $\{t, t+1, \dots, \Tp-1\}$, it follows by the law of total probability applied to $\Tp$ that
		\begin{align}\nonumber
			\pr(v_0 \in X_{\Tp} \textnormal{ and }v_0 \textnormal{ spawns at }\Tp \mid X_t = M)
			&= \pr(v_0 \textnormal{ spawns at } X_{\Tp} \mid X_t = M)\\\nonumber
			&\ge \sum_{i \ge 0}\sum_{\substack{M' \subseteq V\\v_0 \in M'}} \frac{r/W(M')}{r/W(M')+1/(\beta\dd^2W(M'))}\cdot p_i^{M'}\\\label{eqn:empty-step-fail}
			&=\frac{r}{r+1/(\beta\dd^2)} \ge 1 - \frac{1}{r\beta\dd^2}.
		\end{align}
		If $v_0 \in X_{\Tp}$ and $v_0$ spawns at $\Tp$, it follows that $c(v_0) \in X_{\Tp}$. By the definition of $\Tpp$, it therefore follows that $c(v_0) \in X_{t'}$ for all $\Tp \le t' \le \Tpp-1$, and so $c(v_0)$ does not spawn a non-mutant onto $v_0$ at any time in $\{\Tp +1, \dots, \Tpp\}$. Hence, (ii) follows from \eqref{eqn:empty-step-fail}.
		
		Now, for all $i \geq 0$ and all $M_i \subseteq V$ with $c(v_0) \in M_i$, since $\beta \ge 26r$ and $\dd \ge 6r$ we have
		\begin{align*}
			\pr(c(v_0)\textnormal{ spawns into }V_3\textnormal{ at }t+i+1 \mid X_{t+i} = M_i) \ge \frac{r}{W(M_i)}\cdot \frac{\beta\dd^2}{\beta\dd^2+\ceil{r\beta^{1/2}\dd}} \ge \frac{r}{2W(M_i)}.
		\end{align*}
		Moreover, writing $\mathcal{E}$ for the event that some $v \in V \setminus X_{t+i}$ spawns onto $c(v_0)$ at time $t+i+1$, we have
		\[
			\pr(\mathcal{E} \mid X_{t+i} = M_i) \le \frac{\ceil{r\beta^{1/2}\dd}}{W(M_i)} + \frac{\beta\dd^2}{W(M_i)} \cdot \frac{1}{\beta\dd^2+\dd^2-1} \le \frac{r\beta^{1/2}\dd+1}{W(M_i)} + \frac{1}{W(M_i)} \le \frac{2r\beta^{1/2}\dd}{W(M_i)}.
		\]
		Now, suppose that $\Mp \subseteq V$ and $\tp > t$ are such that $\pr(X_{\tp} = \Mp \textnormal{ and }\Tp = \tp \mid X_t = M) \ne 0$ and $c(v_0) \in \Mp$. Write 
		\[
			q_i^{\Mpp} = \pr(\Tpp = \tp+i+1,X_{\tp+i} = \Mpp \mid \Tp = \tp, X_{\tp} = \Mp).
		\] 
		Since when $X_{\tp} = \Mp$ and $\Tp = \tp$, $c(v_0)$ remains a mutant throughout $\{\Tp, \dots, \Tpp-1\}$, by the law of total probability applied to $\Tpp$ it follows that
		\begin{align*}
			\pr(X_{\Tpp} \cap V_3 \ne \emptyset \mid \Tp = \tp, X_{\tp} = \Mp)
			&\ge \sum_{i \geq 0}\sum_{\substack{\Mpp \subseteq V\\c(v_0) \in \Mpp}} \frac{r/(2W(\Mpp))}{r/(2W(\Mpp)) + 2r\beta^{1/2}\dd/W(\Mpp)}q_i^{\Mpp}\\
			&= \frac{1}{1 + 4\beta^{1/2}\dd}
			\ge \frac{1}{5\beta^{1/2}\dd}.
		\end{align*}
		It therefore follows from the law of total probability applied to $\Tp$ and \eqref{eqn:empty-step-fail} that
		\begin{align*}
			\pr(X_{\Tpp} \cap V_3 \ne \emptyset \mid X_t = M) &\ge \sum_{\tp > t}\sum_{\substack{\Mp \subseteq V\\v_0 \in \Mp}}\pr(X_{\Tpp} \cap V_3 \ne \emptyset, \Tp = \tp, X_{\tp} = \Mp \mid X_t = M)\\
			&\ge \frac{1}{5\beta^{1/2}\dd}\pr(v_0 \in X_{\Tp} \mid X_t = M)
			\ge \frac{1}{5\beta^{1/2}\dd} - \frac{1}{r\beta\dd^2} \ge \frac{1}{6\beta^{1/2}\dd}.
		\end{align*}
		Hence (iii) follows.
	\end{proof}
	
	Lemma~\ref{lem:empty-trans} is then proved by repeatedly applying Lemma~\ref{lem:empty-step}.
	
\begin{lemma}\label{lem:empty-trans}
Let $t \ge 0$, let $j \ge 0$, let $M \subseteq V$, and suppose $M \cap V_1 \ne \emptyset$ and $M \cap V_3 = \emptyset$. Then 
$\pr(X_{\Ti{j+1}} \cap V_1 = \emptyset \mid X_t = M, \Ti{j} = t \ne \Tend) \le p_{0,\mathsf{F}}$.
\end{lemma}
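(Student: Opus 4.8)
The plan is to iterate Lemma~\ref{lem:empty-step}. Write $\filt$ for the conditioning event $\{X_t = M,\ \Ti{j} = t \ne \Tend\}$. Since $\Ti{j}$ and $\Tend$ are stopping times, $\filt\in\mathcal{F}_t$, so by the Markov property it suffices to analyse a Moran process started from the configuration~$M$. Because $M\cap V_3 = \emptyset$ and a single step changes $|X_\cdot\cap V_3|$ by at most one, the trigger ``$|X_{t'}\cap V_3| = \gamma+1$'' in the definition of $\Tend$ cannot fire strictly before $V_3$ first becomes non-empty; hence on $\filt$,
\[
\Ti{j+1} = \min\{t' > t \mid X_{t'}\cap V_3 \ne \emptyset \textnormal{ or } X_{t'}\cap V_1 = \emptyset\},
\]
and in particular $t < \Ti{j+1} \le \Tend < \infty$.

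First I would build stopping times $t = S_0 < S_1 < S_2 < \cdots$ recursively: given $S_{i-1}$, if $X_{S_{i-1}}\cap V_1 \ne \emptyset$ then apply Lemma~\ref{lem:empty-step} with its $t$ equal to $S_{i-1}$ and its $M$ equal to $X_{S_{i-1}}$, obtain the stopping time $\Tpp > S_{i-1}$, and set $S_i = \Tpp$; otherwise set $S_i = S_{i-1}+1$. By Lemma~\ref{lem:empty-step}(i) each $S_i$ is a.s.\ finite, and $S_i \ge t+i$, so $S_i\to\infty$. For $i\ge 1$ let $A_i$ be the event ``$X_{t'}\cap V_1 = \emptyset$ for some $t'\in(S_{i-1},S_i]$'' and let $C_i$ be the event ``$X_{S_i}\cap V_3 = \emptyset$ and $X_{S_i}\cap V_1 \ne \emptyset$''; set $C_0 = \filt$ (on which $M\cap V_3 = \emptyset$ and $M\cap V_1\ne\emptyset$). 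On $C_0\cap\dots\cap C_{i-1}$ round~$i$ starts at $S_{i-1}$ in a configuration containing a mutant in $V_1$, so Lemma~\ref{lem:empty-step} applies there; conditioning on the value of $X_{S_{i-1}}$ and using the strong Markov property at $S_{i-1}$, parts (ii) and (iii) give
\[
\pr\bigl(A_i \mid C_0\cap\dots\cap C_{i-1}\bigr) \le \frac{1}{r\beta\dd^2}
\quad\text{and}\quad
\pr\bigl(X_{S_i}\cap V_3 \ne \emptyset \mid C_0\cap\dots\cap C_{i-1}\bigr) \ge \frac{1}{6\beta^{1/2}\dd}.
\]
Since $X_{S_{i-1}}\cap V_3 = \emptyset$ on $C_{i-1}$, the second estimate yields $\pr(C_i \mid C_0\cap\dots\cap C_{i-1}) \le 1 - 1/(6\beta^{1/2}\dd)$, and hence $\pr(C_0\cap\dots\cap C_{i-1}\mid\filt) \le (1 - 1/(6\beta^{1/2}\dd))^{i-1}$ (using $\pr(C_0\mid\filt)=1$).

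Next I would check the containment $\{X_{\Ti{j+1}}\cap V_1 = \emptyset\} \subseteq \bigcup_{i\ge 1}\bigl(C_0\cap\dots\cap C_{i-1}\cap A_i\bigr)$: since $t < \Ti{j+1} < \infty$ and $S_i\to\infty$ with the $S_i$ strictly increasing, there is a unique $i\ge 1$ with $\Ti{j+1}\in(S_{i-1},S_i]$; then $X_{\Ti{j+1}}\cap V_1 = \emptyset$ is precisely $A_i$, while $S_\ell \le S_{i-1} < \Ti{j+1}$ together with the displayed formula for $\Ti{j+1}$ forces $X_{S_\ell}\cap V_3 = \emptyset$ and $X_{S_\ell}\cap V_1 \ne\emptyset$ for every $\ell\le i-1$, i.e.\ $C_0\cap\dots\cap C_{i-1}$. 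Combining with the bounds above,
\[
\pr\bigl(X_{\Ti{j+1}}\cap V_1 = \emptyset \mid \filt\bigr)
\le \sum_{i\ge 1}\pr(C_0\cap\dots\cap C_{i-1}\mid\filt)\cdot\frac{1}{r\beta\dd^2}
\le \frac{1}{r\beta\dd^2}\sum_{i\ge 0}\Bigl(1 - \frac{1}{6\beta^{1/2}\dd}\Bigr)^{i}
= \frac{6}{r\beta^{1/2}\dd} = p_{0,\mathsf{F}}.
\]
The geometric sum is routine, and it is pleasant that it lands exactly on $p_{0,\mathsf{F}}$; I expect the main obstacle to be the measure-theoretic bookkeeping — justifying that Lemma~\ref{lem:empty-step} may be invoked ``inside'' the conditioning on $C_0\cap\dots\cap C_{i-1}$ via the strong Markov property at the stopping time $S_{i-1}$, and verifying the set containment (in particular that on the event in question $V_3$ stays empty through time $S_{i-1}$, so all of $C_0,\dots,C_{i-1}$ really do hold).
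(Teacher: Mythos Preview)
Your proof is correct and follows essentially the same approach as the paper: both iterate Lemma~\ref{lem:empty-step} to produce a sequence of stopping times and combine the bounds (ii) and (iii) from that lemma. The only difference is cosmetic --- the paper partitions on which round is decisive and bounds the conditional failure probability by the ratio $\tfrac{1/(r\beta\dd^2)}{1/(6\beta^{1/2}\dd)} = p_{0,\fail}$ directly, whereas you write out the equivalent geometric series $\sum_{i\ge 0}(1-1/(6\beta^{1/2}\dd))^{i}\cdot 1/(r\beta\dd^2)$; both land on exactly $p_{0,\fail}$.
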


\begin{proof}

Write $\filt$ for the event that $X_t = M$ and that $\Ti{j} = t \ne \Tend$, and suppose throughout that $\filt$ occurs. Thus, 
by definition, $\Ti{j+1}$ is precisely the earliest time $t' > t$ such that $X_{t'} \cap V_3 \ne \emptyset$ or $X_{t'} \cap V_1 = \emptyset$.

Define stopping times $\Itp{0}, \Itp{1}, \dots$ inductively as follows. Let $\Itp{0} = t$. If $\Itp{i} < \Ti{j+1}$, then we must have $X_{\Itp{i}} \cap V_1 \ne \emptyset$, so we define $\Itp{i+1}$ to be the stopping time $\Tpp$ obtained by applying Lemma~\ref{lem:empty-step} with $t = \Itp{i}$ and $M = X_{\Itp{i}}$. 
Note that in this case $\tau_{i+1} > \tau_i$.
If $\Itp{i} \ge \Ti{j+1}$, we set $\Itp{i+1} = \Itp{i}$. 

For every $i \geq 0$, every $M_i \subseteq V$ with $M_i \cap V_1 \neq \emptyset$ and $M_i \cap V_3 = \emptyset$, and every $t'\geq t$, write $\filt_{i,M_i,t'}$ for the event that $\tau_i=t'$, $X_{t'}=M_i$, $t'< T_{j+1}$ and $\filt$ occurs. By Lemma~\ref{lem:empty-step}, we have
$$
\pr(T_{j+1} \in (\tau_i,\tau_{i+1}] \mid \filt_{i,M_i,t'}) 
\ge \pr(|X_{\tau_{i+1}} \cap V_3| \ge 1 \mid \filt_{i,M_i,t'}) 
\ge 1/(6\beta^{1/2}b), $$
	and
$$
\pr(X_{T_{j+1}} \cap V_1 = \emptyset \mbox{ and } T_{j+1} \in (\tau_i,\tau_{i+1}]  \mid \filt_{i,M_i,t'}) 
\le 1/(r\beta b^2).$$
It follows that
\[
	\pr(X_{T_{j+1}} \cap V_1 = \emptyset \mid \filt_{i,M_i,t'} \mbox{ and } T_{j+1} \in (\tau_i,\tau_{i+1}]) \le \frac{1/(r\beta b^2)}{1/(6\beta^{1/2}b)} = p_{0,\mathsf{F}}.
\]
Thus, writing $\filt_{i,M_i,t'}'$ for the event that $\filt_{i,M_i,t'}$ occurs and $T_{j+1} \in (\tau_i,\tau_{i+1}]$, we have
\[
	\pr(X_{T_{j+1}} \cap V_1 = \emptyset \mid \filt) = \sum_{i,M_i,t'}\pr(X_{T_{j+1}} \cap V_1 = \emptyset \mid \filt_{i,M_i,t'}')\pr(\filt'_{i,M_i,t'} \mid \filt) \le p_{0,\mathsf{F}}
\]
which  proves the lemma.
\end{proof}
	
We now collect the bounds of Lemmas~\ref{lem:non-empty-trans} and~\ref{lem:empty-trans} into a single lemma.
	
\begin{lemma}\label{lem:coupling-probs}
Let $i \ge 0$, $t_i \ge 0$, $M \subseteq V$ and $y \ge 0$. Suppose that $y \le |M \cap V_3| \le \yy$ and $M \cap V_1 \neq \emptyset$. Write $\filt$ for the event that $X_{t_i} = M$ and $\Ti{i} = t_i \ne \Tend$. Then, we have
$	\pr(X_{\Ti{i+1}} \cap V_1 = \emptyset \mid \filt) \le p_{y,\fail}$ and
$$\pr(X_{\Ti{i+1}} \cap V_1 = \emptyset \mid \filt) + \pr(|X_{\Ti{i+1}} \cap V_3| = |X_{\Ti{i}} \cap V_3| - 1 \mid \filt) \le 1 - p_{y,y+1}.$$\end{lemma}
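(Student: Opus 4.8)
The plan is to prove Lemma~\ref{lem:coupling-probs} by a short case analysis that simply repackages Lemma~\ref{lem:non-empty-trans} and Lemma~\ref{lem:empty-trans} and compares the resulting bounds with the transition matrix of $\YY$ from Definition~\ref{def:Y}. The first observation is that $\filt$ forces $X_{\Ti{i}} = X_{t_i} = M$, so that $|X_{\Ti{i}} \cap V_3| = |M \cap V_3|$; in particular the hypothesis $y \le |M \cap V_3| \le \yy$ already tells us that $y \ge 1$ implies $|M \cap V_3| \ge 1$. I would then split on whether $M$ meets $V_3$.

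\textbf{Case $|M \cap V_3| \ge 1$.} Here $1 \le |M \cap V_3| \le \yy$ and $M \cap V_1 \ne \emptyset$, so Lemma~\ref{lem:non-empty-trans} (with its $t$, $j$ taken to be our $t_i$, $i$, and its $M$ our $M$) gives
\[
\pr(X_{\Ti{i+1}} \cap V_1 = \emptyset \mid \filt) \le p_{1,\fail}, \qquad \pr(|X_{\Ti{i+1}} \cap V_3| = |X_{\Ti{i}} \cap V_3| - 1 \mid \filt) \le p_{1,0}.
\]
If $y \ge 1$, the definition of the matrix gives $p_{y,\fail} = p_{1,\fail}$ and $p_{y,y+1} = p_{1,2}$, and since the row of state $1$ sums to $1$ we get $p_{1,\fail} + p_{1,0} = 1 - p_{1,2} = 1 - p_{y,y+1}$; both conclusions follow. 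If $y = 0$, it suffices to check $p_{1,\fail} \le p_{0,\fail}$ and $p_{1,\fail} + p_{1,0} = 1 - p_{1,2} \le 1 - p_{0,1}$, and both of these reduce to the single elementary inequality $10/(r\beta\dd^2) \le 6/(r\beta^{1/2}\dd)$, which holds since $\beta \ge 26$ and $\dd \ge 6$.

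\textbf{Case $M \cap V_3 = \emptyset$.} Then $y = 0$ by the hypothesis, and Lemma~\ref{lem:empty-trans} gives $\pr(X_{\Ti{i+1}} \cap V_1 = \emptyset \mid \filt) \le p_{0,\fail} = p_{y,\fail}$, which is the first claim. For the second, the event $|X_{\Ti{i+1}} \cap V_3| = |X_{\Ti{i}} \cap V_3| - 1 = -1$ is impossible, so its probability is $0$ and the left-hand side is at most $p_{0,\fail} \le p_{0,\fail} + p_{0,0} = 1 - p_{0,1} = 1 - p_{y,y+1}$, as required.

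I do not expect a genuine obstacle here, since all of the probabilistic content already lives in Lemmas~\ref{lem:non-empty-trans} and~\ref{lem:empty-trans}. The only mildly delicate point is the bookkeeping in the sub-case $|M \cap V_3| \ge 1$ with $y = 0$: one must notice that having a mutant in $V_3$ is at least as favourable as having none, so the (weaker) state-$0$ bounds $p_{0,\fail}$ and $1 - p_{0,1}$ still dominate the state-$1$ estimates coming out of Lemma~\ref{lem:non-empty-trans} — and this is exactly where the comparison $10/(r\beta\dd^2) \le 6/(r\beta^{1/2}\dd)$ between the two failure probabilities of $\YY$ is invoked.
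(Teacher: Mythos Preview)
Your proof is correct and follows essentially the same approach as the paper: the paper also splits into the three sub-cases $y>0$, $y=0$ with $M\cap V_3=\emptyset$, and $y=0$ with $M\cap V_3\ne\emptyset$, invoking Lemma~\ref{lem:non-empty-trans} and Lemma~\ref{lem:empty-trans} exactly as you do and using the same comparison $p_{1,\fail}<p_{0,\fail}$, $1-p_{1,2}<1-p_{0,1}$ in the last case. The only cosmetic difference is that the paper branches on $y$ first and then on $|M\cap V_3|$, whereas you branch on $|M\cap V_3|$ first.
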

	
\begin{proof}
If $y > 0$, so that $M \cap V_3 \ne \emptyset$, then the result follows by Lemma~\ref{lem:non-empty-trans}. 
If instead $y = 0$ and $M \cap V_3 = \emptyset$, then the result follows by Lemma~\ref{lem:empty-trans} 
(using the observation that $\pr(|X_{\Ti{i+1}} \cap V_3| = |X_{\Ti{i}} \cap V_3| - 1 \mid \filt)=0$
and $p_{0,\fail} \leq 1-p_{0,1}$).
Finally, suppose $y = 0$ and $M \cap V_3 \ne \emptyset$. Then by Lemma~\ref{lem:non-empty-trans}, we have $\pr(X_{T_{i+1}} \cap V_1 = \emptyset \mid \filt) \le p_{1,\mathsf{F}} < p_{0,\mathsf{F}}$, and
		\begin{align*}
			&\pr(X_{\Ti{i+1}} \cap V_1 = \emptyset \mid \filt) + \pr(|X_{\Ti{i+1}} \cap V_3| = |X_{\Ti{i}} \cap V_3| - 1 \mid \filt)\le p_{1,\mathsf{F}} + p_{1,0} = 1 - p_{1,2} < 1 - p_{0,1}.
		\end{align*}
		Thus, the result follows in all cases.	
\end{proof}
	
We are now finally in a position to  define our coupling.
	
\begin{lemma}\label{lem:Y-coupling}
Suppose $X_0 \cap V_1 \ne \emptyset$. Then, there exists a coupling $\Phi(X,Y)$ between $(X_t)_{t \ge 0}$ and 
$(Y_t)_{t \ge 0}$ such that for all $i \ge 0$ with $Y_i \ne \fail$, there exists $t\le \Ti{i}$ such that $|X_t \cap V_3| \ge Y_i$.
\end{lemma}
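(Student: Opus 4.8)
The plan is to build $(\YY_t)_{t\ge 0}$ on the same probability space as $(X_t)_{t\ge 0}$, enlarged by a sequence of independent uniform random variables used only to make apportionment choices, defining the $i$-th state $\YY_i$ by induction on $i$ so that the $i$-th step of $\YY$ is coupled with the evolution of $X$ between times $\Ti{i}$ and $\Ti{i+1}$. I will in fact maintain the stronger invariant: for every $i$, either $\YY_i=\fail$, or $\YY_i \le |X_{\Ti{i}}\cap V_3|$. Taking $t=\Ti{i}$ then gives the lemma. The base case is immediate: since $X_0=\{x_0\}$ with $x_0\in V_1$, we have $|X_{\Ti{0}}\cap V_3|=0=\YY_0$, and $\Ti{0}=0\ne\Tend$ because $\Tend\ge 1$.

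For the inductive step, suppose the invariant holds at index $i$. If $\YY_i=\fail$, set $\YY_{i+1}=\fail$; if $\Ti{i}=\Tend$, let $\YY_{i+1}$ follow the prescribed transition out of $\YY_i$ independently of $X$. In both cases the invariant survives trivially: in the second case one checks (see the last paragraph) that $\YY_i\ne\fail$ together with $\Ti{i}=\Tend$ forces $|X_{\Tend}\cap V_3|=\yy+1$, and since $\Ti{j}=\Tend$ for all $j\ge i$ we get $|X_{\Ti{j}}\cap V_3|=\yy+1\ge\YY_j$. Otherwise $\Ti{i}\ne\Tend$, so $\YY_i\le|X_{\Ti{i}}\cap V_3|\le\yy$ and $X_{\Ti{i}}\cap V_1\ne\emptyset$. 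Conditioned on the history up to $\Ti{i}$, the random set $X_{\Ti{i+1}}$ realises exactly one of three events: (A)~$X_{\Ti{i+1}}\cap V_1=\emptyset$ (which forces $\Ti{i+1}=\Tend$ and $X_{\Ti{i+1}}\cap V_3=X_{\Ti{i}}\cap V_3$, since one spawn cannot both empty $V_1$ and change $V_3$); (B)~$|X_{\Ti{i+1}}\cap V_3|=|X_{\Ti{i}}\cap V_3|-1$ with $V_1$ still nonempty; (C)~$|X_{\Ti{i+1}}\cap V_3|=|X_{\Ti{i}}\cap V_3|+1$ with $V_1$ still nonempty. Writing $y=\YY_i$, Lemma~\ref{lem:coupling-probs} gives $\pr(\mathrm A)\le p_{y,\fail}$ and $\pr(\mathrm A)+\pr(\mathrm B)\le 1-p_{y,y+1}$, hence $\pr(\mathrm C)\ge p_{y,y+1}$.

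I then define $\YY_{i+1}$ as follows: on event (C), with conditional probability $p_{y,y+1}/\pr(\mathrm C)$ set $\YY_{i+1}=y+1$; on event (A) set $\YY_{i+1}=\fail$; and on the remaining mass (all of (B), plus the leftover part of (C)) split it, using the independent coin, between $\fail$ and $y-1$ (between $\fail$ and $0$ if $y=0$) so that overall $\pr(\YY_{i+1}=\fail)=p_{y,\fail}$ and $\pr(\YY_{i+1}=y-1)=p_{y,y-1}$. This is feasible precisely because the mass not assigned to $y+1$ equals $1-p_{y,y+1}=p_{y,\fail}+p_{y,y-1}$, while $\pr(\mathrm A)\le p_{y,\fail}$ lets us devote all of (A) to $\fail$; this is the step where Lemma~\ref{lem:coupling-probs} does the real work. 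The invariant is preserved: $\YY_{i+1}=y+1$ only on (C), where $|X_{\Ti{i+1}}\cap V_3|=|X_{\Ti{i}}\cap V_3|+1\ge y+1$; $\YY_{i+1}=y-1$ (or $0$) only where $|X_{\Ti{i+1}}\cap V_3|\ge|X_{\Ti{i}}\cap V_3|-1\ge y-1$ (or $\ge 0$); and $\YY_{i+1}=\fail$ imposes nothing.

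By construction $\pr(\YY_{i+1}=j\mid\text{history up to }\Ti{i})=p_{\YY_i,j}$ for every $j$, so $(\YY_i)_i$ is a Markov chain with the transition matrix of Definition~\ref{def:Y} and initial state $0$, as required. The final point, invoked above, is that $X$ can first reach $\Tend$ only on an event of type (A) — at which step the coupling has just sent $\YY$ to $\fail$, where it stays — or on an event of type (C) with $|X_{\Tend}\cap V_3|=\yy+1$; a type-(B) step cannot trigger $\Tend$. Hence whenever $\Ti{i}=\Tend$ with $\YY_i\ne\fail$ we indeed have $|X_{\Tend}\cap V_3|=\yy+1$, so $\YY$ may run freely after decoupling without breaking the invariant. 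I expect the main obstacle to be organising this decoupling bookkeeping cleanly — verifying that (A), (B), (C) are exhaustive given $\Ti{i}<\Tend$, and that the one-sided bounds of Lemma~\ref{lem:coupling-probs} leave exactly the slack needed to build the coupling — rather than any new quantitative estimate.
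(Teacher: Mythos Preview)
Your proposal is correct and follows essentially the same approach as the paper: an inductive construction of $Y_{i+1}$ from the evolution of $X$ on $[\Ti{i},\Ti{i+1}]$, using the one-sided bounds of Lemma~\ref{lem:coupling-probs} to apportion the excess mass of the ``up'' event between the ``down'' and ``fail'' outcomes of $Y$, together with a decoupling argument once $\Tend$ is reached. Your single invariant ``$Y_i=\fail$ or $Y_i\le|X_{\Ti{i}}\cap V_3|$'' packages the paper's three separate properties into one statement, and your trichotomy (A)/(B)/(C) matches the paper's $\calE_{\fail}/\calE_{\down}/\calE_{\up}$; the verification that (B) cannot trigger $\Tend$ and that (A) forces $X_{\Ti{i+1}}\cap V_3=X_{\Ti{i}}\cap V_3$ is exactly the bookkeeping the paper carries out when showing Properties~1--3 imply the lemma.
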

\begin{proof}
We will construct a coupling $\Phi(X,Y)$ such that the following properties hold for every non-negative integer~$j$.
\begin{enumerate}
\item If $T_j < \Tend$, then either $Y_j=\fail$ or $|X_{T_j} \cap V_3| \ge Y_j$.
\item If $T_j = \Tend$ 
and 
$j = \min\{s\geq 0\mid T_s=\Tend\}$ and 
$|X_{T_j} \cap V_3| = \gamma+1$, then either $Y_j=\fail$ or $|X_{T_j} \cap V_3| \ge Y_j$. 
\item If $T_j=\Tend$ and 
$j=\min\{s\geq 0\mid T_s=\Tend\}$ and
$X_{T_j} \cap V_1 = \emptyset$, then $Y_j=\fail$.
\end{enumerate}

First, we observe that a coupling satisfying Properties 1--3 
would satisfy the condition in the statement of the lemma.
To see this, consider some non-negative integer $i$ for which we want to establish the condition 
in the statement of the lemma (that $Y_i=\fail$ or there exists $t\le \Ti{i}$ such that $|X_t \cap V_3| \ge Y_i$).
If $T_i<\Tend$ then this follows from Property~1 with $j=i$ and $t=T_i$.
If $T_i=\Tend$ 
and  
$i = \min\{s\geq 0\mid T_s=\Tend\}$,
then it follows from Properties~2 and~3 with $j=i$ and $t=T_i$.
Otherwise, there is a non-negative integer $j<i$ such that 
$T_j=\Tend$
and 
$j = \min\{s\geq 0\mid T_s=\Tend\}$.
Properties~2 and~3 guarantee that $Y_j=\fail$ (in which case the definition of $Y$ ensures that $Y_i=\fail$
so the condition is satisfied) or $|X_{T_j} \cap V_3| = \gamma + 1$ so (by the definition of $Y$) $Y_i \le |X_{T_j} \cap V_3|$ and taking $t=T_j$ satisfies the condition.

In order to construct the coupling, it will be useful to have some notation.
Given a coupling $\Phi(X,Y)$ and
a non-negative integer~$j$,
let $\Phi^j$ denote the initial sequence $(X_0,\ldots,X_{T_j},Y_0,\ldots,Y_j)$.
We will construct 
$\Phi(X,Y)$  by induction on $j$,
using $\Phi^j$ (and some randomness) to construct $\Phi^{j+1}$.
To do this, we have to ensure that Properties 1--3 are satisfied, and also 
that the coupling is valid, in the sense that
\begin{itemize}
\item The marginal distribution of $X_{{T_j}+1},\ldots,X_{T_{j+1}}$ is correct, given $X_{T_j}$
and given 
whether or not  
$T_j < \Tend$ (which can be deduced from $X_0,\ldots,X_{T_j}$), and
\item The marginal distribution of $Y_{j+1}$ is correct, given $Y_j$.
\end{itemize}

Note that $\Phi^0 = (X_0,0)$ satisfies Properties 1--3
(for Property~3 it is important that $X_0 \cap V_1 \neq \emptyset$ and this is guaranteed in the statement of the lemma)
so we now show how to construct $\Phi^{j+1}$, given $\Phi^j$.
In fact, if  $\Tend \leq T_j$ then
any coupling 
$\Phi(X,Y)$ which is consistent with $\Phi^j$ and   satisfies the two marginal distributions is fine 
(since the three properties are 
irrelevant for $T_i$ with $i>j$).
So we will not consider this case.
However, if $T_j < \Tend$  we will show how to construct $\Phi^{j+1}$.
\begin{itemize}
\item {\bf If $Y_j=\fail$:\quad} The definition of $Y$ guarantees that $Y_{j+1}=\fail$. This satisfies all three properties,
so let $X_{{T_j}+1},\ldots,X_{T_{j+1}}$
evolve independently of $Y_{j+1}$
according to its correct marginal distribution, 
given $X_{T_j}$
and given  the fact that $T_j < \Tend$.
\item {\bf If  $Y_j \neq \fail$:\quad}
Let $\calE_{\fail}$ be the event that
$X_{T_{j+1}}\cap V_1=\emptyset$
and let
$p_{\fail}$ denote the probability that $\calE_{\fail}$ occurs
in the correct marginal distribution (which depends only on $X_{T_j}$, noting that $T_j<\Tend$).
Let 
$\calE_{\down}$ be the event that 
$|X_{T_{j+1}}\cap V_3|= |X_{T_j} \cap V_3|-1$  
and let $p_{\down}$ be the probability that $\calE_{\down}$ occurs 
in the same marginal distribution.
Note that  $\calE_{\fail}$ and $\calE_{\down}$ are disjoint, since $T_j<\Tend$.
Let $\calE_{\up}$ be the event that
$|X_{T_{j+1}}\cap V_3|= |X_{T_j} \cap V_3|+1$.
In the marginal distribution, this occurs with probability 
$1-p_{\fail} - p_{\down}$.
By Property~$1$ and the definition of $\Tend$,
we have $0\leq Y_j \leq |X_{T_j} \cap V_3| \leq \gamma$. 
Now Lemma~\ref{lem:coupling-probs}
(with $i=j$, $t_i=T_j$, $M=X_{T_j}$ and $y=Y_j$) 
shows that  
$ p_{\fail} \le p_{Y_j,\fail}$ and
$p_{\fail} + p_{\down} \leq  1-p_{Y_j,Y_j+1}$.
The quantity $1-p_{Y_j,Y_j+1}$ is either
$p_{Y_j,\fail} + p_{Y_j,Y_j-1}$, if $Y_j>0$,
or $p_{Y_j,\fail} + p_{Y_j,Y_j}$, if $Y_j=0$.
To unify these cases, let $p_{Y_j,\down}$ be
$p_{Y_j,Y_j-1}$ if $Y_j>0$ and
$p_{Y_j,Y_j}$ if $Y_j=0$.
Then we have
\begin{equation}
\label{eq:ourprobs}
p_{\fail} \le p_{Y_j,\fail} \mbox{ and }
p_{\fail} + p_{\down} \leq p_{Y_j,\fail} + p_{Y_j,\down}.\end{equation} 
The coupling is as follows:  Choose
$X_{{T_j}+1},\ldots,X_{T_{j+1}}$ according to the correct marginal distribution.
\begin{itemize}
\item $\calE_{\fail}$ happens with probability $p_{\fail}\leq p_{Y_j,\fail}$. When this happens, set $Y_{j+1}=\fail$.
\item $\calE_{\up}$ happens with probability $p_{\up} \geq p_{Y_j,Y_{j}+1}$. 
When this happens, with probability $p_{Y_j,Y_{j}+1}/p_{\up}$, set $Y_{j+1}=Y_j+1$.
Let 
$\xi = p_{\up} - p_{Y_j,Y_j+1}$ and
$\rho = \min\{p_{Y_j,\down}, \xi\}$.
With probability $\rho/p_{\up}$, set $Y_{j+1}=\max\{Y_j-1,0\}$ and
with probability $(\xi - \rho)/p_{\up}$ set $Y_{j+1}= \fail$.
\item $\calE_{\down}$ happens with probability $p_{\down}$. When this happens,
let $\sigma = p_{Y_j,\down}- \rho$.
Let 
$Y_{j+1} = \max\{Y_{j}-1,0\}$ with  
probability $\sigma/p_{\down}$
 and $Y_{j+1} = \fail$, with  
 probability $1-\sigma/p_{\down}$.
\end{itemize}
It is now easy to check that 
$Y_{j+1}=Y_j+1$ and $Y_{j+1}=\max\{Y_j-1,0\}$ both happen with the correct marginal
distribution (so $Y_{j+1}=\fail$ does as well). 
Also,   equation~\eqref{eq:ourprobs} guarantees
that the probabilities are all well-defined (and non-negative!).
Finally, the coupling itself guarantees Properties 1--3.
\end{itemize}
\end{proof}	
	
Given our coupling, we want to analyse $(Y_t)_{t\ge 0}$, 
and this is easy, since it is very similar to the classical gambler's ruin problem. For completeness, we give details below.
	 	
	\begin{definition}\label{def:Z}
		Write $\stateZ = \{0, 1, \dots, \yy\}$, and suppose $\zz \in \stateZ$. Then we define $(\ZZ_t^z)_{t \ge 0}$ to be a discrete-time Markov chain with state space $\stateZ$, initial state $z$, and the following transition matrix:
		\begin{alignat*}{6}
			&&p_{0,0}' &= 1/(1+r'),
			&p_{0,1}' &= r'/(1+r'),\\
			&\textnormal{ for all }i \in [\yy-1],\ 
			&p_{i,i-1}' &= 1/(1+r'),\quad
			&p_{i,i+1}' &= r'/(1+r'),\\
			&&p_{\yy,\yy-1}' &= 1,
		\end{alignat*}
		and $p_{i,j}' = 0$ for all other $i,j\in\stateZ$.
	\end{definition}
	
	The following analysis of the classical gambler's ruin problem is well-known. See, for example, \cite[Chapter~XIV]{Fel1968:Probability}.
	
	\begin{lemma}\label{lem:gamblers-ruin}
		Consider a random walk on $\Zzero$ that absorbs at $0$ and $a$ (for some positive integer $a$), starts at $z \in \{0, \dots, a\}$, and from each state in $\{1, \dots, a-1\}$ has probability $p \ne 1/2$ of increasing (by 1) and probability $q=1-p$ of decreasing (by 1). Then, the probability of reaching state $a$ is
		\[
			\frac{1 - (q/p)^z}{1 - (q/p)^a}.
		\]
		Moreover, if $p > 1/2$, then the expected number of transitions before absorption is at most
		\[
			\frac{a}{p-q}\cdot \frac{1-(q/p)^z}{1-(q/p)^a}. 
		\]\qed
	\end{lemma}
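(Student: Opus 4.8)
The plan is to treat this as a birth--death chain on the finite state space $\{0,1,\dots,a\}$ --- since the walk absorbs at both endpoints it never leaves this set --- and to apply standard first-step analysis. Before setting up the recurrences I would first record that absorption is almost sure with a geometric tail: from any state, within $a$ steps the walk reaches $\{0,a\}$ with probability at least $\min\{p,q\}^a > 0$ (take $a$ consecutive up-steps, or $a$ consecutive steps towards the nearer endpoint), so $\pr(T > ka) \le (1 - \min\{p,q\}^a)^k$. This gives $\pr(T < \infty) = 1$ and $\E[T] < \infty$, which is what makes the expectation recurrence below legitimate.

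For the hitting probability, I would let $h_z$ denote the probability of reaching $a$ from $z$ and condition on the first step: $h_0 = 0$, $h_a = 1$, and $h_z = p\,h_{z+1} + q\,h_{z-1}$ for $1 \le z \le a-1$. The characteristic equation $p x^2 - x + q = 0$ has roots $1$ and $q/p$, distinct because $p \ne 1/2$, so $h_z = A + B(q/p)^z$; the boundary conditions force $A = -B = 1/(1-(q/p)^a)$, giving $h_z = (1-(q/p)^z)/(1-(q/p)^a)$.

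For the expected absorption time with $p > 1/2$, I would let $e_z = \E[T \mid X_0 = z]$ (finite, by the tail bound) and again use first-step analysis: $e_0 = e_a = 0$ and $e_z = 1 + p\,e_{z+1} + q\,e_{z-1}$ in the interior. The homogeneous part is $A + B(q/p)^z$ as before, and trying $e_z = cz$ as a particular solution forces $c = -1/(p-q)$, so $e_z = -z/(p-q) + A + B(q/p)^z$. Imposing the boundary conditions gives $B = -A$ and $A = a/((p-q)(1-(q/p)^a))$, so
\[
e_z = \frac{a}{p-q}\cdot\frac{1-(q/p)^z}{1-(q/p)^a} - \frac{z}{p-q}.
\]
Since $z \ge 0$ and $p - q > 0$, discarding the final term yields exactly the stated upper bound.

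All the steps are routine; the only points that need the slightest care are justifying $\E[T] < \infty$ so that the recurrence for $e_z$ is valid rather than merely formal (handled by the geometric tail estimate) and spotting the linear particular solution $e_z = cz$ of the inhomogeneous recurrence, both of which are standard.
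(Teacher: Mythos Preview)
Your argument is correct: the first-step recurrences, the characteristic-equation solution for the hitting probability, the linear particular solution for the expected time, and the final drop of the nonpositive term $-z/(p-q)$ are all sound, and your geometric tail bound legitimately justifies that $\E[T]<\infty$ before you manipulate the expectation recurrence.

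The only ``difference'' from the paper is that the paper does not prove this lemma at all: it states the result as classical and cites Feller (the \qed in the statement signals this). So you have supplied a complete, standard proof where the paper chose to quote the literature; nothing in your approach conflicts with or improves on the paper's treatment, since there is no proof in the paper to compare against.
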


\begin{lemma}\label{lem:Z-Y-props}
Suppose  
$\dd \ge {((1/\lg r')+1)}^3$, 
and write
$\TZ = \min \{i \mid \ZZ^0_i = \floor{\dd^{1/3}}\}$. 
Then 
$$\E(|\{0 \le i < \TZ \mid \ZZ^0_i = 0\}|) \le 2r'/(r'-1), \mbox{and}$$
$$\E(\TZ) \le  6 \floor{\dd^{1/3}}r'/(r'-1).$$ 
\end{lemma}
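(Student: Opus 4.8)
The plan is to view $(\ZZ^0_t)$ as a gambler's-ruin walk and read both estimates off its positive drift, taking care only at the two non-standard states $0$ and $\yy$. Write $p = r'/(1+r')$ and $q = 1/(1+r')$ for the up/down probabilities at the interior states, and set $a = \floor{\dd^{1/3}}$, $s = q/p = 1/r'$. Since $r' = (1+r)/2 > 1$ we have $p > 1/2 > q$, $0 < s < 1$, $1/p = (1+r')/r'$, $p - q = (r'-1)/(1+r')$ and, conveniently, $ps = q$. The one place the hypothesis $\dd \le \sqrt{k}$ is genuinely needed — and without which $\TZ$ would be infinite — is to see that the target state~$a$ lies in the state space $\stateZ = \{0,\dots,\yy\}$: since $a \le \dd^{1/3} \le k^{1/6} \le k^{1/3} \le (\beta k)^{1/3}$ (using $\beta \ge 1$) and $a \in \Zone$, we get $a \le \floor{(\beta k)^{1/3}} = \yy$. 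Because $\ZZ^0$ starts at~$0$, moves by at most one per step, and $\TZ$ is the first hitting time of~$a$, this forces $\ZZ^0_i \in \{0,\dots,a-1\}$ for all $i < \TZ$; so in the interval $[0,\TZ)$ the chain never meets the reflecting state~$\yy$, and the only non-interior state it can occupy is~$0$. A routine geometric-tail bound (from any state in $\{0,\dots,a-1\}$, $a$ consecutive up-steps — probability at least $p^{a}$ — hit~$a$) gives $\E(\TZ) < \infty$, which I will use silently.

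To bound $\E(|\{0 \le i < \TZ \mid \ZZ^0_i = 0\}|)$ I would argue by stochastic domination across successive visits to~$0$. From state~$0$, the probability of revisiting~$0$ strictly before time~$\TZ$ is $q + p\cdot\pr(\text{walk from }1\text{ reaches }0\text{ before }a)$; by Lemma~\ref{lem:gamblers-ruin} (its~$z$ being~$1$, and its $p,q,a$ being the present ones) the walk from~$1$ reaches~$a$ with probability $(1-s)/(1-s^{a}) \ge 1-s$, hence reaches~$0$ first with probability at most~$s$, so this revisit probability is at most $q + ps = 2q$. By the strong Markov property applied at the successive times the chain is at~$0$, the number $N$ of such times before $\TZ$ satisfies $\pr(N \ge m) \le (2q)^{m-1}$ for all $m \ge 1$, and since $2q < 1$ (equivalently $r'>1$) this gives $\E(N) \le 1/(1-2q) = (1+r')/(r'-1) \le 2r'/(r'-1)$.

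For $\E(\TZ)$ I would use a one-line drift argument: for $i < \TZ$ the conditional increment $\E(\ZZ^0_{i+1}-\ZZ^0_i \mid \ZZ^0_0,\dots,\ZZ^0_i)$ is $p$ at state~$0$ and $p-q$ at every interior state, hence at least $p-q > 0$ throughout $[0,\TZ)$; applying optional stopping to the submartingale $\ZZ^0_{i\wedge\TZ} - (p-q)(i\wedge\TZ)$ (bounded increments, $\E(\TZ)<\infty$) yields $a = \E(\ZZ^0_{\TZ} - \ZZ^0_0) \ge (p-q)\,\E(\TZ)$, so $\E(\TZ) \le a/(p-q) = a(1+r')/(r'-1) \le 2ar'/(r'-1) \le 6\floor{\dd^{1/3}}\,r'/(r'-1)$. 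Both estimates are short; the only real care is the preliminary bookkeeping above — checking $a \le \yy$ so that $\TZ$ is finite, and that the irregular transitions of $\ZZ$ at~$0$ and at~$\yy$ do not interfere (the first is handled explicitly in each argument, the second because the chain stays in $\{0,\dots,a-1\}$ before $\TZ$). Note that only $\dd \le \sqrt{k}$ and $r'>1$ are actually used, not the full hypothesis $\dd \ge ((1/\lg r')+1)^3$.
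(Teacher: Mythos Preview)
Your proof is correct. For the first claim (the expected number of visits to~$0$), your argument is essentially identical to the paper's: both show that from state~$0$ the probability of reaching $a=\floor{\dd^{1/3}}$ before returning to~$0$ is at least $(r'-1)/(r'+1)$ via Lemma~\ref{lem:gamblers-ruin}, and then bound the visit count by the mean of a geometric.

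For the second claim your route is genuinely different and a bit slicker. The paper bounds the expected duration of a single excursion of $\ZZ^0$ from~$0$ (until return to~$0$ or arrival at~$a$) using the expected-time formula in Lemma~\ref{lem:gamblers-ruin}, then multiplies by the expected number of excursions via Wald's equation. That computation needs the intermediate estimate $(1/r')^{\floor{\dd^{1/3}}} \le 1/2$, which is exactly where the hypothesis $\dd \ge ((1/\lg r')+1)^3$ enters. Your drift/optional-stopping argument bypasses this entirely: the uniform lower bound $p-q$ on the one-step drift in $\{0,\dots,a-1\}$ gives $\E(\TZ) \le a/(p-q) = a(1+r')/(r'-1) \le 2ar'/(r'-1)$ directly, which is both sharper than the paper's constant~$6$ and, as you correctly note, does not use the lower-bound hypothesis on~$\dd$ at all. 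The bookkeeping you flag --- that $a \le \yy$ so $\TZ < \infty$ and the reflecting state~$\yy$ is never visited before~$\TZ$ --- is the only care needed, and you handle it.
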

\begin{proof}
By Lemma~\ref{lem:gamblers-ruin}, the probability of reaching $\floor{\dd^{1/3}}$ before $0$ in $\ZZ^1$   is
\[\frac{1 - 1/r'}{1 - (1/r')^{\floor{\dd^{1/3}}}} \ge \frac{r'-1}{r'}.\]
Thus in $Z^0$, the probability of reaching $\floor{\dd^{1/3}}$ 
before returning to $0$ is at least $p_{0,1}'(r'-1)/r' = (r'-1)/(r'+1)$. Thus, the number of steps $\ZZ^0$ spends at $0$ before reaching $\floor{\dd^{1/3}}$ is dominated from above by a geometric variable with parameter $(r'-1)/(r'+1)$, and so
\[\E(|\{0 \le i < \TZ \mid \ZZ^0_i = 0\}|) \le \frac{r'+1}{r'-1} \le \frac{2r'}{r'-1},\]
as required.
		
Now, by Lemma~\ref{lem:gamblers-ruin}, the expected number of transitions  
that it takes for $\ZZ^1$ to reach either~$0$
or $\floor{\dd^{1/3}}$ is at most 
\[
\frac{\floor{\dd^{1/3}}(r'+1)}{r'-1}\cdot \frac{1 - 1/r'}{1-(1/r')^{\floor{\dd^{1/3}}}} \le 
\frac{\floor{\dd^{1/3}}(r'+1)}{r'-1}\cdot2(1-1/r') = \frac{2\floor{\dd^{1/3}}(r'+1)}{r'}.
\] 
So  
the expected number of transitions that it takes $\ZZ^0$ to
return to~$0$ or reach
$\floor{\dd^{1/3}}$ is at most 
$$1 + p_{0,1}'\cdot\frac{2\floor{\dd^{1/3}}(r'+1)}{r'} = 1 + 2\floor{\dd^{1/3}} \le 3\floor{\dd^{1/3}}.$$
By Wald's equation, it follows that
\[
\E(\TZ) \leq \left(\frac{2r'}{r'-1}\right)3\floor{\dd^{1/3}},
\]
and so the result follows.
\end{proof}
	
\begin{lemma}\label{lem:Y-hit}
Suppose $\dd \ge \max\{
{((1/\lg r')+1)}^3, 
120 
\}$. Then the probability that $\YY$ reaches state $\floor{\dd^{1/3}}$ is at least 
$1 - 25/(\beta^{1/2}\dd(r-1))$.
\end{lemma}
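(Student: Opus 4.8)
The plan is to couple $(\YY_t)_{t\ge 0}$ with the failure-free walk $(\ZZ^0_t)_{t\ge 0}$ of Definition~\ref{def:Z} so that $\YY$ tracks $\ZZ^0$ until it jumps to $\fail$, and then to bound the chance that $\YY$ jumps to $\fail$ before $\ZZ^0$ reaches $\floor{\dd^{1/3}}$ by a union bound over steps. Note first that $\floor{\dd^{1/3}}\le\yy$ (since $\dd\le k$, so $\dd^{1/3}\le(\beta k)^{1/3}$), so this is an intermediate target for both chains, and $\ZZ^0$ hits it with probability $1$ in finite expected time by Lemma~\ref{lem:Z-Y-props}. Moreover $\YY$ is a finite Markov chain whose only absorbing states are $\fail$ and $\yy+1$, and it cannot be absorbed at $\yy+1$ without first passing through $\floor{\dd^{1/3}}$; hence the event that $\YY$ misses $\floor{\dd^{1/3}}$ is contained in the event that $\YY$ reaches $\fail$ strictly before $\floor{\dd^{1/3}}$, and it suffices to bound the latter.

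For the coupling, run $(\ZZ^0_t)_{t\ge 0}$, let $\TZ=\min\{i\mid \ZZ^0_i=\floor{\dd^{1/3}}\}$ as in Lemma~\ref{lem:Z-Y-props}, and take i.i.d.\ uniform random variables $U_0,U_1,\dots$ on $[0,1]$, independent of $(\ZZ^0_t)$. Define $\YY_0=0$, set $\YY_{t+1}=\fail$ whenever $\YY_t=\fail$, and otherwise (so $\YY_t=\ZZ^0_t=:s$) set $\YY_{t+1}=\fail$ if $U_t<p_{s,\fail}$ and $\YY_{t+1}=\ZZ^0_{t+1}$ if not. For $t<\TZ$ the walk satisfies $\ZZ^0_t\le\floor{\dd^{1/3}}-1\le\yy-1$ (both chains increment by at most $1$, so $\ZZ^0$ cannot overshoot $\floor{\dd^{1/3}}$), so $s\ne\yy$ and the identity $p_{s,s'}=(1-p_{s,\fail})p'_{s,s'}$ holds; thus this construction realizes $\YY$ with its correct law up to the first time it reaches $\floor{\dd^{1/3}}$ or $\fail$, and on the event $\{U_t\ge p_{\ZZ^0_t,\fail}\text{ for all }t<\TZ\}$ we get $\YY_{\TZ}=\floor{\dd^{1/3}}$. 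Conditioning on $(\ZZ^0_t)$ and using independence of the $U_t$,
\[
\pr\bigl(\YY\text{ reaches }\fail\text{ before }\floor{\dd^{1/3}}\bigr)\le\pr\bigl(\exists\,t<\TZ:U_t<p_{\ZZ^0_t,\fail}\bigr)\le\E\Bigl[\textstyle\sum_{t=0}^{\TZ-1}p_{\ZZ^0_t,\fail}\Bigr].
\]

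It remains to evaluate the right-hand side. Since $p_{0,\fail}=6/(r\beta^{1/2}\dd)$ and $p_{i,\fail}=10/(r\beta\dd^2)$ for $i\ge1$, splitting according to whether $\ZZ^0_t=0$ bounds the expectation by
\[
\frac{6}{r\beta^{1/2}\dd}\,\E\bigl(|\{0\le t<\TZ\mid\ZZ^0_t=0\}|\bigr)+\frac{10}{r\beta\dd^2}\,\E(\TZ).
\]
Lemma~\ref{lem:Z-Y-props} (whose hypothesis on $\dd$ is assumed here) replaces the two expectations by $2r'/(r'-1)$ and $6\floor{\dd^{1/3}}r'/(r'-1)$. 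Using $r'=(1+r)/2$, hence $r'-1=(r-1)/2$ and $r'/r<1$, the first term is at most $24/((r-1)\beta^{1/2}\dd)$ and the second (bounding $\floor{\dd^{1/3}}\le\dd^{1/3}$) is at most $120/((r-1)\beta\dd^{5/3})$. Finally $\beta\ge26$ and $\dd\ge120$ give $\beta^{1/2}\dd^{2/3}\ge\sqrt{26}\cdot120^{2/3}>120$, so the second term is at most $1/((r-1)\beta^{1/2}\dd)$, and the two together are at most $25/((r-1)\beta^{1/2}\dd)$, which is the claimed bound.

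The computations are all routine; the only point needing real care is the validity of the coupling, namely checking that before time $\TZ$ the walk $\ZZ^0$ never sits in the reflecting state $\yy$ (where the laws of $\YY$ and $\ZZ^0$ part ways), which follows from $\floor{\dd^{1/3}}\le\yy$ together with the unit step size. The small numerical inequalities $\floor{\dd^{1/3}}\le\yy$ and $\sqrt{26}\cdot120^{2/3}>120$ both hold with room to spare.
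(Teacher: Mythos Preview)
Your proof is correct and follows essentially the same approach as the paper. Both arguments reduce to the bound $\pr(\YY\text{ fails before }\floor{\dd^{1/3}})\le p_{0,\fail}\,\E(|\{t<\TZ:\ZZ^0_t=0\}|)+p_{1,\fail}\,\E(\TZ)$ and then invoke Lemma~\ref{lem:Z-Y-props}; you realise the comparison via an explicit coupling $p_{s,s'}=(1-p_{s,\fail})p'_{s,s'}$, whereas the paper obtains the same inequality by a direct path-probability comparison using $p_{i,j}\le p'_{i,j}$, and the final numerical estimates differ only cosmetically.
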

	
\begin{proof}
Let $\TZ = \min \{i \mid \ZZ^0_i = \floor{\dd^{1/3}}\}$ and let 
$\TY = \min\{i \ge 0 \mid \YY_i \in \{\floor{\dd^{1/3}}, \fail\}\}$. Then we have
		\begin{align*}
			\pr(\YY_{\TY} = \fail) &= \sum_{i=0}^{\infty} \sum_{x=0}^{\floor{\dd^{1/3}}-1} \pr(\YY_i = x\textnormal{ and }\TY > i)p_{x,\fail}\\
			&= p_{0,\fail}\sum_{i=0}^{\infty} \pr(\YY_i = 0\textnormal{ and }\TY > i) + p_{1,\fail}\sum_{i=0}^{\infty} \sum_{x=1}^{\floor{\dd^{1/3}}-1} \pr(\YY_i = x\textnormal{ and }\TY > i).
		\end{align*}
Since $\floor{\dd^{1/3}} \leq \gamma$, the definitions of~$Y$ and $Z^0$
show that	  the following are equivalent.
\begin{itemize}
\item   $(y_0, \dots, y_i)$ is a possible value of $(\YY_0, \dots, \YY_i)$ which implies $\YY_i = x$ and $\TY > i$.
\item $(y_0, \dots, y_i)$   is a possible value of $(\ZZ^0_0, \dots, \ZZ^0_i)$ which implies $\ZZ^0_i = x$ and $\TZ > i$. 
\end{itemize}

Moreover, for all $0 \le i \le \floor{\dd^{1/3}}-1$ and all $0 \le j \le \floor{\dd^{1/3}}$ we have $p_{i,j} \le p_{i,j}'$. It follows that
\begin{align*}
\pr(\YY_{\TY} = \fail) &\le p_{0,\fail}\sum_{i=0}^{\infty} \pr(\ZZ_i^0 = 0\textnormal{ and }\TZ > i) + p_{1,\fail}\sum_{i=0}^{\infty} \sum_{x=1}^{\floor{\dd^{1/3}}-1} \pr(\ZZ^0_i = x\textnormal{ and }\TZ > i)\\
&\le p_{0,\fail}\cdot\E(|\{0 \le i < \TZ \mid \ZZ^0_i = 0\}|) + p_{1,\fail}\cdot \E(\TZ).
\end{align*}
It follows by Lemma~\ref{lem:Z-Y-props} and the fact that $\dd\ge  
120$ (and hence $\dd^{2/3} \ge  
120/5$) that 	
\begin{align*}
\pr(\YY_{\TY} = \fail) &\le \frac{6}{r\beta^{1/2}\dd}\cdot \frac{2r'}{r'-1} + \frac{10}{r\beta\dd^2}\cdot 
\frac{ 
6\floor{\dd^{1/3}}r'}{r'-1} \le \frac{r'}{r(r'-1)\beta^{1/2}\dd}\left(12 + \frac{ 
60}{\dd^{2/3}\beta^{1/2}} \right)\\
&\le \frac{2}{(r-1)\beta^{1/2}\dd}\left(12 + \frac{1}{2}\right)
\le \frac{25}{\beta^{1/2}\dd(r-1)},
\end{align*}
as required.
\end{proof}

The goal of the subsection, Lemma~\ref{lem:phase-one}, now follows easily from Lemmas~\ref{lem:Y-coupling} and~\ref{lem:Y-hit}.
 
\begin{lemma}\label{lem:phase-one}
Suppose $\dd \ge \max\{
{((1/\lg r')+1)}^3, 
120 
\}$
and $X_0 \cap V_1 \ne \emptyset$. Then with probability at least 
$1 - 25/(\beta^{1/2}\dd(r-1))$, there exists $t \ge 0$ such that $|X_t \cap V_3| \ge \floor{\dd^{1/3}}$.
\end{lemma}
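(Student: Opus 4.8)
The plan is to combine the two results already proved in this subsection; no genuinely new work is needed. Since $X_0 \cap V_1 \ne \emptyset$, Lemma~\ref{lem:Y-coupling} supplies a coupling $\Phi(X,Y)$ of $(X_t)_{t\ge0}$ with $(\YY_t)_{t\ge0}$ under which, for every $i\ge0$ with $\YY_i\ne\fail$, there is some $t\le\Ti{i}$ with $|X_t\cap V_3|\ge\YY_i$. The hypothesis $\dd\ge\max\{{((1/\lg r')+1)}^3,120\}$ is exactly the hypothesis of Lemma~\ref{lem:Y-hit}, which therefore applies and tells us that, with probability at least $1-25/(\beta^{1/2}\dd(r-1))$, the chain $\YY$ reaches the state $\floor{\dd^{1/3}}$. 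Before invoking it I would record the trivial sanity check that $\floor{\dd^{1/3}}$ is a legitimate non-failure state of $\YY$: since $\dd\le\sqrt k$ we have $\floor{\dd^{1/3}}\le k^{1/6}\le\floor{(k\beta)^{1/3}}=\yy$, so $\floor{\dd^{1/3}}\in\{0,1,\dots,\yy\}\subseteq\stateY$, and in particular $\floor{\dd^{1/3}}\ne\fail$ and the relevant hitting index $i$ (which exists and is finite on the event that $\YY$ reaches $\floor{\dd^{1/3}}$, as $\floor{\dd^{1/3}}$ is absorbing for $\YY$) satisfies $\YY_i\ne\fail$.

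With that in hand, the argument is immediate. Work under the coupling $\Phi(X,Y)$. On the event that $\YY$ reaches $\floor{\dd^{1/3}}$, pick an index $i$ with $\YY_i=\floor{\dd^{1/3}}$; since this is a non-failure state, the coupling yields some $t\le\Ti{i}$ with $|X_t\cap V_3|\ge\YY_i=\floor{\dd^{1/3}}$. Hence under $\Phi(X,Y)$ the event ``there exists $t\ge0$ with $|X_t\cap V_3|\ge\floor{\dd^{1/3}}$'' contains the event ``$\YY$ reaches $\floor{\dd^{1/3}}$'', so by Lemma~\ref{lem:Y-hit} it has probability at least $1-25/(\beta^{1/2}\dd(r-1))$. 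The event in question depends only on the $X$-coordinate, and the $X$-marginal of $\Phi(X,Y)$ is the law of the Moran process; therefore the same bound holds for the Moran process itself, which is the claim.

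There is essentially no obstacle here: the real content of the subsection lives in Lemma~\ref{lem:Y-coupling} (constructing the coupling and verifying its marginals and Properties~1--3) and Lemma~\ref{lem:Y-hit} (the gambler's-ruin estimate), and the only points requiring any care are the bookkeeping ones noted above --- that $\floor{\dd^{1/3}}$ lies in $\stateY$ and is not the failure state, and that passing from a statement about the coupled pair to a statement about the Moran process is legitimate because the coupling preserves the $X$-marginal.
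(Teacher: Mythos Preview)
Your proof is correct and follows exactly the same approach as the paper's: combine Lemma~\ref{lem:Y-coupling} with Lemma~\ref{lem:Y-hit}. One harmless slip: the state $\floor{\dd^{1/3}}$ is not absorbing for $\YY$ (only $\fail$ and $\yy+1$ are), but you do not actually need this --- on the event that $\YY$ reaches $\floor{\dd^{1/3}}$ there is by definition a finite index $i$ with $\YY_i=\floor{\dd^{1/3}}$, which is all the argument uses.
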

\begin{proof}
By Lemma~\ref{lem:Y-hit}, the probability that $\YY$ reaches state $\floor{\dd^{1/3}}$ is at least
$1 - 25/(\beta^{1/2}\dd (r-1))$. The result therefore follows from Lemma~\ref{lem:Y-coupling}.
\end{proof}

	\subsection{Going from mutants in $V_3$ to fixation}
	
Our goal in this subsection is to prove Lemmas~\ref{lem:fixate-from-leaf} and~\ref{lem:fixate-from-clique}, which give lower bounds on fixation probability conditioned on $X_0 \subseteq V_1$ and $X_0 \subseteq V_3$ respectively. Both arguments rely heavily on Lemma~\ref{lem:fixate-from-full} below, which says that fixation is very likely if $V_3$ contains at least $\floor{b^{1/3}}$ mutants. (Indeed, Lemma~\ref{lem:fixate-from-leaf} is immediate from this combined with Lemma~\ref{lem:phase-one}.) To prove Lemma~\ref{lem:fixate-from-full}, as in the previous section, we will need to couple the evolution of mutants in $V_3$ with a gambler's ruin. However, this time we will need the coupling to last until the gambler's ruin absorbs --- we cannot afford a chance of failure at every transition.
\begin{lemma}\label{lem:Z-coupling}
Suppose $t \ge 0$ and $M \subseteq V$. Let $z \ge 1$, and suppose $z \le \min\{|M \cap V_3|, \yy\}$. 
Let $I = \min\{i \mid \ZZ_i^z = 0\}$.
Then, conditioned on $X_t = M$, there exists a coupling $\Psi(X,\ZZ^z)$ between $(X_{t'})_{t' \ge t}$ and $(\ZZ_{t'}^z)_{t'\ge 0}$  
such that, for all $i< I$, 
there is a $t' \geq t+i-1$ such that
$|X_{t'} \cap V_3| \geq Z_i^z$.
\end{lemma}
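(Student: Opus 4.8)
The plan is to adapt the coupling construction of Lemma~\ref{lem:Y-coupling}, but to track only the number of mutants in $V_3$ and, since we now need the coupling to survive all the way until $\ZZ^z$ is absorbed at~$0$, to replace the failure state with explicit bookkeeping of the intervals during which $|X\cap V_3|$ wanders above~$\yy$, where the drift estimates of Lemma~\ref{lem:non-empty-step} no longer apply. Conditioned on $X_t=M$, define the $V_3$-update times $T_0=t$ and $T_{i+1}=\min\{t'>T_i\mid |X_{t'}\cap V_3|\ne|X_{T_i}\cap V_3|\}$. Every Moran step changes $|X\cap V_3|$ by at most one; and whenever $1\le|X\cap V_3|\le|V_3|-1$ there is an edge from $X\cap V_3$ to $V_3\setminus X$ (since $G[V_3]$ is a small-set expander by Definition~\ref{def:incubator}(v)), so the next step increases $|X\cap V_3|$ with positive probability and also decreases it with positive probability. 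Hence $T_{i+1}$ is almost surely finite whenever $1\le|X_{T_i}\cap V_3|\le|V_3|-1$, and $T_i\ge t+i$ for every~$i$.

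For the drift, note that whenever $1\le|X_s\cap V_3|\le\yy$, Lemma~\ref{lem:non-empty-step} gives that the probability of decreasing $|X\cap V_3|$ at step $s+1$ is at most $(1+5/\beta)/r$ times the probability of increasing it. Aggregating over the steps between $T_i$ and $T_{i+1}$ by the law of total probability, exactly as in Lemma~\ref{lem:non-empty-trans} (no $V_1$ bookkeeping is needed), it follows that, provided $1\le|X_{T_i}\cap V_3|\le\yy$,
\[
\pr(|X_{T_{i+1}}\cap V_3|=|X_{T_i}\cap V_3|-1 \mid \text{history up to }T_i)\le\frac{1+5/\beta}{r+1+5/\beta}\le\frac{1}{1+r'},
\]
the last step being Lemma~\ref{lem:beta-algebra}. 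Equivalently, the next change of $|X\cap V_3|$ is an increase with conditional probability at least $r'/(1+r')=p'_{j,j+1}$ for every $j\in[\yy-1]$; so the increase probability of $|X\cap V_3|$ dominates that of $\ZZ^z$ at every state in $\{1,\dots,\yy-1\}$, and trivially also at~$\yy$, where $\ZZ^z$ moves down with probability one.

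I would then build $\Psi$ by induction on~$i$, maintaining the invariant that for every $i$ before $\ZZ^z$ first reaches~$0$, either $|X_{T_i}\cap V_3|>\yy$ or $Z^z_i\le|X_{T_i}\cap V_3|$ (so that $Z^z_i\le\yy$ in either case); the base case $i=0$ holds because $z\le\min\{|M\cap V_3|,\yy\}$. Suppose the invariant holds at step~$i$ with $Z^z_i=j\ge1$. If $1\le|X_{T_i}\cap V_3|\le\yy$, generate $X_{T_i+1},\dots,X_{T_{i+1}}$ from the true Moran law and, using fresh independent randomness, couple so that $\ZZ^z$ increases with its correct probability $p'_{j,j+1}$ but only on the event that $|X\cap V_3|$ increased at $T_{i+1}$ (which is possible, since by the drift bound that event has conditional probability at least $p'_{j,j+1}$), taking $Z^z_{i+1}=\yy-1$ when $j=\yy$. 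Then ``$\ZZ^z$ increases''$\,\subseteq\,$``$|X\cap V_3|$ increases'', so the gap $|X_{T_i}\cap V_3|-Z^z_i$ never decreases while $X$ stays in $[1,\yy]$, and if $X$ jumps to $\yy+1$ then $Z^z_{i+1}\le\yy<\yy+1$, so the invariant survives. If instead $|X_{T_i}\cap V_3|>\yy$, let $X$ and $\ZZ^z$ evolve independently from their true laws, incrementing the index of $\ZZ^z$ by one per Moran step, until $|X\cap V_3|$ first equals~$\yy$ (coming down, $|X\cap V_3|$ must pass through~$\yy$; if $X$ fixates first this phase simply never ends, harmlessly), and then resume the coupled construction; throughout such an interval $|X\cap V_3|\ge\yy+1>\yy\ge Z^z$, so the invariant is preserved. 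Once $\ZZ^z$ reaches~$0$ we let both chains run freely.

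Finally, fix $i<I$, so $Z^z_i\ge1$, and let $t'$ be the real time carrying index~$i$ --- an update time $T_i$ in a coupled phase, or a single Moran step in a phase with $|X\cap V_3|>\yy$. The invariant gives $|X_{t'}\cap V_3|\ge Z^z_i$ in the first case and $|X_{t'}\cap V_3|\ge\yy+1>Z^z_i$ in the second; and since each step of $\ZZ^z$ consumes at least one unit of real time (an update time advances real time by at least one, a Moran step by exactly one), we get $t'\ge t+i-1$, the single unit of slack absorbing the step lost whenever a coupled phase is resumed right after an escaped one. The step I expect to be the main obstacle is precisely this escaped-phase bookkeeping: one must keep $\ZZ^z$'s marginal law correct while re-synchronising its index with real time, throughout intervals in which the hypothesis $|X\cap V_3|\le\yy$ of Lemma~\ref{lem:non-empty-step} --- the source of the favourable drift estimate --- is unavailable.
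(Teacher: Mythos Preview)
Your argument is essentially correct and shares the paper's core idea: extract the drift bound $\pr(|X\cap V_3|\text{ decreases at the next update}\mid\cdot)\le 1/(1+r')$ from Lemmas~\ref{lem:non-empty-step} and~\ref{lem:beta-algebra}, and maintain the invariant $|X_{\cdot}\cap V_3|\ge Z^z_i$ along a sequence of $V_3$-update times. The one substantive difference is your treatment of the regime $|X\cap V_3|>\yy$, where the small-set-expander hypothesis of Lemma~\ref{lem:non-empty-step} is unavailable. You introduce an ``escaped phase'' in which $Z^z$ advances once per Moran step rather than once per $V_3$-update time, and you correctly flag the resulting index re-synchronisation as the main obstacle (indeed, your invariant as written is phrased in terms of $T_i$, which is no longer the relevant real time once an escaped phase has intervened). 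The paper sidesteps this entirely. Since $Z^z$ lives in $\{0,\dots,\yy\}$, one may keep advancing $Z^z$ exactly once per $V_3$-update time $\ZTi{j}$ throughout, letting $X$ and $Z^z$ evolve independently for that single step whenever $|X_{\ZTi{j}}\cap V_3|\ge\yy+1$: then $|X_{\ZTi{j+1}}\cap V_3|\ge|X_{\ZTi{j}}\cap V_3|-1\ge\yy\ge Z^z_{j+1}$ holds automatically, and likewise when $Z^z_j=\yy$. No separate phase, no mixed time scales, no slack needed in the time bound; the obstacle you anticipate simply does not arise. Your construction can be made rigorous by replacing $T_i$ with a single real-time sequence $\sigma_i$ that absorbs both phases, but the paper's route is shorter and cleaner.
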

\begin{proof}
Following  Definition~\ref{def:Ts}, let
$\ZTend = \min\{\tilde{t} \ge t \mid X_{\tilde{t}} \cap V_3 = \emptyset  \mbox{ or }
X_{\tilde{t}}=V\}$. 
Note that $\ZTend$ is finite with probability~$1$. Define $\ZTi{0}, \ZTi{1}, \dots$ recursively by $\ZTi{0} = t$ and
		\[
		\ZTi{i} = \min(\{\ZTend\}\cup \{\tilde{t} > \ZTi{i-1} \mid X_{\tilde{t}} \cap V_3 \ne X_{\tilde{t}-1} \cap V_3\}).
		\]
		
Consider any $t_i \ge t$ and $M_i \subseteq V$ with $1 \le |M_i \cap V_3| \le \yy$. Write $\filt_i$ for the event that $\ZTi{i} = t_i \ne \ZTend$, $X_{t_i} = M_i$, and $X_t = M$. For $t_i' \ge t_i$ and $M_i' \subseteq V$, write $$p_{t_i'}^{M_i'} = \pr(X_{t_i'} = M_i'\textnormal{ and }\ZTi{i+1} = t_i'+1 \mid \filt_i).$$	
Then we have
		\begin{align*}
		&\pr(|X_{\ZTi{i+1}} \cap V_3| = |X_{\ZTi{i}} \cap V_3| - 1 \mid \filt_i)\\
		=\ &\sum_{t_i' \ge t_i}\sum_{\substack{M_i' \subseteq V\\M_i' \cap V_3 = M_i \cap V_3}} \pr(|X_{t_i'+1} \cap V_3| = |X_{t_i'}\cap V_3|-1 \mid X_{t_i'} = M_i',\ZTi{i+1} = t_i'+1,\filt_i)\cdot p_{t_i'}^{M_i'}.
		\end{align*}
		Note that since $1 \le |M_i' \cap V_3| \le \yy \le |V_3|-1$, the conditioning on $\ZTi{i+1} = t_i'+1$ in the above expression is precisely equivalent to conditioning on $|X_{t_i'+1} \cap V_3| = |X_{t_i'} \cap V_3| \pm 1$. Moreover, by Lemma~\ref{lem:non-empty-step}, writing $\kappa = |E(V_3 \cap M_i', V_3 \setminus M_i')|/(W(M_i')(\beta\dd^2+\dd^2-1))$, when $M_i' \cap V_3 = M_i \cap V_3$ we have
		\begin{align*}
		\pr(|X_{t_i'+1} \cap V_3| = |X_{t_i'} \cap V_3| + 1 \mid X_{t_i'} = M_i', \filt_i) &\ge r\kappa,\\
		\pr(|X_{t_i'+1} \cap V_3| = |X_{t_i'} \cap V_3| - 1 \mid X_{t_i'} = M_i', \filt_i) &\le (1+5/\beta)\kappa.
		\end{align*}
It therefore follows from Lemma~\ref{lem:beta-algebra}
that 
		\begin{equation}\label{eqn:Z-coupling-probs}
		\pr(|X_{\ZTi{i+1}} \cap V_3| = |X_{\ZTi{i}} \cap V_3| - 1 \mid \filt_i) \le \frac{1+5/\beta}{1+5/\beta+r}\sum_{t_i' \ge t_i}\sum_{\substack{M_i' \subseteq V\\M_i' \cap V_3 = M_i \cap V_3}}p_{t_i'}^{M_i'} \le \frac{1}{1+r'}.
		\end{equation}
		
		Let $I' = \min\{i \mid \ZTi{i} = \ZTend\}$. We are now in a position to define a coupling $\Psi(X, \ZZ^z)$ such that 
		\begin{equation}\label{eqn:Z-coupling-prop}
		\textnormal{for all }j \le I',\ |X_{\ZTi{j}} \cap V_3| \ge \ZZ_j^z.
		\end{equation}
We first observe that such a coupling would satisfy the condition in the statement of the lemma.
Consider $i<I$. We wish to show that 
there is a $t' \geq t+i-1$ such that
$|X_{t'} \cap V_3| \geq Z_i^z$. 
There are two cases to consider. 
\begin{itemize}
\item
If  $i < I'$, then, since $\ZTi{i} < \ZTend$, we have $t+i -1 < \ZTi{i}$.
But from~\eqref{eqn:Z-coupling-prop},
$ |X_{\ZTi{i}} \cap V_3| \ge \ZZ_i^z$. So we can take $t'=\ZTi{i}$.
\item
Suppose instead that $i\geq I'$.   
From the definition of~$I'$, $\ZTi{I'} = \ZTend$, so
from~\eqref{eqn:Z-coupling-prop}, 
we have $|X_{\ZTend} \cap V_3| \ge \ZZ_{I'}^z$.
But since $I'\leq i< I$, $\ZZ_{I'}^z>0$, so
since $X_{\ZTend} \cap V_3$ is non-empty,
the definition of $\ZTend$ implies that the process fixates by time~$\ZTend$.
Thus, for any $t' \geq \ZTend$, we have
$|X_{t'} \cap V_3| =  |V_3|$, and this is at least $Z_j^z$ for any $j$ (since the state space of $Z^z$ only 
goes up to~$\gamma$) so it suffices to take any $t'\geq \max\{\ZTend, t+i-1\}$.  
\end{itemize}
		
		Given a coupling $\Psi(X,\ZZ^z)$ and a non-negative integer $j$, let $\Psi^j$ denote the initial sequence $(X_0, \dots, X_{\ZTi{j}}, \ZZ^z_0, \dots, \ZZ^z_j)$. We will first construct the sequence $\Psi^0, \Psi^1, \dots$ by induction on $j$, using $\Psi^j$ (and some randomness) to construct $\Psi^{j+1}$. We will continue this process until, for some $j > 0$, we obtain a $\Psi^j$ which implies $\ZTi{j} = \Tend$. (Note that $\pr(I' < \infty \mid X_t = M) = 1$.) We will then complete the coupling by allowing $X_{\ZTend+1}, X_{\ZTend+2}, \dots$ and $Z_{I'+1}, Z_{I'+2}, \dots$ to evolve independently according to their marginal distributions (which will vacuously satisfy \eqref{eqn:Z-coupling-prop}). Note that $\Psi^0 = (M,z)$ satisfies \eqref{eqn:Z-coupling-prop} since $z \le |M \cap V_3|$. Suppose we are given $\Psi^j$ satisfying \eqref{eqn:Z-coupling-prop} with $\ZTi{j} < \ZTend$. We will now construct $\Psi^{j+1}$.

		\begin{itemize}
			\item \textbf{If $\boldsymbol{|X_{\ZTi{j}} \cap V_3| \ge \yy+1}$:\quad} We let $X_{\ZTi{j}+1}, \dots, X_{\ZTi{j+1}}$ and $\ZZ^z_{j+1}$ evolve independently according to their correct marginal distributions. Note that $|X_{\ZTi{j+1}} \cap V_3| \ge |X_{\ZTi{j}} \cap V_3| - 1 \ge \yy \ge \ZZ^z_{j+1}$, so \eqref{eqn:Z-coupling-prop} is satisfied for $j+1$.
			
			\item  \textbf{If $\boldsymbol{\ZZ^z_j = \yy}$:\quad} We let $X_{\ZTi{j}+1}, \dots, X_{\ZTi{j+1}}$ and $\ZZ^z_{j+1}$ evolve independently according to their correct marginal distributions. Note that by \eqref{eqn:Z-coupling-prop}, $|X_{\ZTi{j+1}} \cap V_3| \ge |X_{\ZTi{j}} \cap V_3| -1 \ge \ZZ^z_j-1 = \ZZ^z_{j+1}$, so \eqref{eqn:Z-coupling-prop} is again satisfied for $j+1$.
			
			\item \textbf{If $\boldsymbol{|X_{\ZTi{j}} \cap V_3| \le \yy}$ and $\boldsymbol{\ZZ^z_j < \yy}$:\quad} Note that since $\ZTi{j} < \ZTend$, in this case we also have $|X_{\ZTi{j}} \cap V_3| \ge 1$. Let $\mathcal{E}_{\down}$ be the event that $|X_{\ZTi{j+1}} \cap V_3| = |X_{\ZTi{j}} \cap V_3| - 1$, and let $p_{\down}$ be the probability that $\mathcal{E}_{\down}$ occurs in the correct marginal distribution (which depends only on $X_{\ZTi{j}}$). Let $\mathcal{E}_{\up}$ be the event that $|X_{\ZTi{j+1}} \cap V_3| = |X_{\ZTi{j}} \cap V_3| + 1$, and let $p_{\up} = 1 - p_{\down}$. Then \eqref{eqn:Z-coupling-probs} shows that $p_{\down} \le 1/(1+r')$. The coupling is as follows. 
			\begin{itemize}
				\item Choose $X_{\ZTi{j}+1}, \dots, X_{\ZTi{j+1}}$ according to the correct marginal distribution.
				\item $\mathcal{E}_{\down}$ occurs with probability $p_{\down} \le 1/(1+r')$. When this happens, set $\ZZ^z_{j+1} = \max\{\ZZ^z_j-1,0\}$.
				\item $\mathcal{E}_{\up}$ occurs with probability $p_{\up} \ge r'/(1+r')$. When this happens, set $\ZZ^z_{j+1} = \ZZ^z_j+1$ with probability $r'/(p_{\up}(1+r'))$, and set $\ZZ^z_{j+1} = \max\{\ZZ^z_j-1,0\}$ with probability $1-r'/(p_{\up}(1+r'))$.
			\end{itemize} 
			It is now easy to check that $\ZZ^z_{j+1} = \ZZ^z_j+1$ and $\ZZ^z_{j+1} = \max\{\ZZ^z_j-1,0\}$ both happen with the correct marginal distribution. Moreover, the coupling itself guarantees that $|X_{\ZTi{j+1}} \cap V_3| \ge \ZZ^z_{j+1}$, so \eqref{eqn:Z-coupling-prop} is satisfied for $j+1$.
		\end{itemize}
	\end{proof}
	
We  will use the following Lemma from \cite[Theorem 9]{DGMRSS2014:approx} (which applies to all
graphs).	
\begin{lemma}\label{lem:absorb-time}
For all $M \subseteq V$, the expected absorption time of $X$ from state $M$ is at most $r|V|^4/(r-1)$. \qed
\end{lemma}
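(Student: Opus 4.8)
The plan is to prove Lemma~\ref{lem:absorb-time} by the potential-function (drift) argument of \cite[Theorem 9]{DGMRSS2014:approx}. Throughout write $n = |V|$. Since the lemma is applied to the incubator $G$, which is undirected and connected, every vertex has degree at least~$1$, so I define the potential
\[
\phi(S) = \sum_{v \in S}\frac{1}{d(v)}, \qquad S \subseteq V,
\]
and observe that $0 = \phi(\emptyset) \le \phi(S) \le \phi(V) \le n$, and that $\phi(X_t)$ is eventually constant, equal to $0$ or $\phi(V)$, exactly once $X$ has absorbed.

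The first step is to compute the one-step drift of $\phi$. Fixing a state $S$ with $\emptyset \subsetneq S \subsetneq V$ and unpacking Definition~\ref{def:moran} --- tracking which spawns actually change $\phi$ --- gives
\[
\E[\phi(X_{t+1}) - \phi(X_t) \mid X_t = S]
= \frac{r}{W(S)}\sum_{v\in S}\sum_{w \in N(v)\setminus S}\frac{1}{d(v)d(w)}
 - \frac{1}{W(S)}\sum_{v\notin S}\sum_{w \in N(v)\cap S}\frac{1}{d(v)d(w)}.
\]
The crucial point is that, because $G$ is undirected, both double sums equal $Q(S) := \sum_{\{a,b\}\in E,\, a\in S,\, b\notin S}\tfrac{1}{d(a)d(b)}$: the first counts each cut edge $\{a,b\}$ once, as the ordered pair $(a,b)$, and the second counts it once, as $(b,a)$. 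Hence the drift equals $(r-1)Q(S)/W(S) \ge 0$, so $\phi(X_t)$ is a bounded submartingale. For a quantitative lower bound I would use that $W(S) = n + (r-1)|S| \le rn$; that $E(S, V \setminus S)$ is non-empty since $G$ is connected and $\emptyset \subsetneq S \subsetneq V$; and that every cut edge contributes at least $1/(n-1)^2 \ge 1/n^2$ to $Q(S)$. This yields $\E[\phi(X_{t+1}) - \phi(X_t) \mid X_t = S] \ge (r-1)/(rn^3) =: c$ for every non-absorbing state $S$.

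The second step converts this uniform positive drift into the absorption-time bound. Let $\tau = \min\{t \ge 0 \mid X_t \in \{\emptyset, V\}\}$. The process $Z_t := \phi(X_{t\wedge\tau}) - c\,(t\wedge\tau)$ is a submartingale: on $\{t < \tau\}$ its conditional increment is at least $c - c = 0$ by the drift bound, and on $\{t \ge \tau\}$ it is $0$. Therefore $\phi(M) = \E[Z_0] \le \E[Z_t] = \E[\phi(X_{t\wedge\tau})] - c\,\E[t\wedge\tau] \le n - c\,\E[t\wedge\tau]$, so $\E[t\wedge\tau] \le n/c$ for all $t$; letting $t \to \infty$, monotone convergence gives $\E[\tau] \le n/c = rn^4/(r-1)$ (and in particular $\tau < \infty$ almost surely), which is exactly the claimed bound.

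The argument is short, so there is no single deep obstacle; the step that genuinely needs care is the drift identity, and specifically the cancellation that makes the ``up'' and ``down'' weighted cut-sums coincide. This is precisely what dictates the choice $\phi(v) = 1/d(v)$: with the naive potential $|S|$, the one-step drift of $|X_t|$ has no fixed sign on a general graph, so no such clean bound would follow. A secondary point of care is not to assume $\E[\tau] < \infty$ a priori in the optional-stopping step; stopping at $t \wedge \tau$ and only afterwards taking $t \to \infty$ avoids this.
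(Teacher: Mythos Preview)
Your argument is correct and is precisely the potential-function/drift proof of \cite[Theorem~9]{DGMRSS2014:approx}; the paper itself does not give a proof but simply cites that result, so your approach coincides with what the paper invokes.
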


Lemma~\ref{lem:absorb-time}   implies that, in order to prove that the Moran process is likely to fixate, 
it suffices to show that it runs for a long time without going extinct.
We will use this in the proof of Lemma~\ref{lem:fixate-from-full}.
 
\begin{lemma}\label{lem:fixate-from-full}
There exists $\dd_0$ depending only on $r$ such that the following holds whenever $\dd \ge \dd_0$. Suppose $t \ge 0$ and $M \subseteq V$ with $|M \cap V_3| \ge \floor{\dd^{1/3}}$. Then we have 
\[\pr(X \textnormal{ fixates } \mid X_t=M) \ge 1 - 1/(\beta^{1/2}\dd(r-1)).\]
\end{lemma}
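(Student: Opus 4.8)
The plan is to combine the coupling of Lemma~\ref{lem:Z-coupling} with the absorption-time bound of Lemma~\ref{lem:absorb-time}, following the strategy flagged just before the lemma: we show that, conditioned on $X_t=M$, it is very unlikely that $X$ goes extinct inside a large time window $T$ \emph{and} very unlikely that $X$ has not yet absorbed by time $T$; together these force $\pr(X\textnormal{ fixates}\mid X_t=M)$ to be close to~$1$. Write $a=\floor{\dd^{1/3}}$ and recall $\yy=\floor{(\beta k)^{1/3}}$. Since $\dd\le\sqrt k$ we have $1\le a\le\yy$, and $a\le|M\cap V_3|$ by hypothesis, so Lemma~\ref{lem:Z-coupling} applies with $z=a$, producing a coupling $\Psi(X,\ZZ^{a})$ (conditioned on $X_t=M$). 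The first step is to observe that extinction of $X$ forces the coupled chain to reach $0$ quickly. Indeed, by property~\eqref{eqn:Z-coupling-prop} from that proof, $|X_{\ZTi{j}}\cap V_3|\ge\ZZ^{a}_j$ for all $j\le I'$, where $I'=\min\{i\mid\ZTi{i}=\ZTend\}$. If $X$ ever goes extinct then $\ZTend<\infty$ and $X_{\ZTend}\ne V$, so by definition of $\ZTend$ we have $X_{\ZTend}\cap V_3=\emptyset$, whence $\ZZ^{a}_{I'}=0$. Since $t=\ZTi{0}<\ZTi{1}<\cdots$, we get $I'\le\ZTend-t$, and since $X$ reaching $\emptyset$ or $V$ is one of the events defining $\ZTend$ we get $\ZTend-t\le A$, where $A$ is the absorption time of $X$ measured from $t$. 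Hence, with $I=\min\{i\mid\ZZ^{a}_i=0\}$,
\[
\pr(X\textnormal{ goes extinct}\mid X_t=M)\le\pr(I\le A\mid X_t=M).
\]

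Next I would bound the right-hand side by $\pr(I\le T)+\pr(A>T)$ for a threshold $T$. By Markov's inequality and Lemma~\ref{lem:absorb-time}, $\pr(A>T\mid X_t=M)\le rn^{4}/((r-1)T)$. The term $\pr(I\le T)$ depends only on the explicit birth--death chain of Definition~\ref{def:Z}, so I would estimate it using the gambler's-ruin formulas of Lemma~\ref{lem:gamblers-ruin}: before $\ZZ^{a}$ first reaches $\yy$ it behaves exactly like a walk on $\{0,\dots,\yy\}$ with absorbing barriers and upward probability $r'/(1+r')>1/2$, so it hits $0$ before $\yy$ with probability at most $(1/r')^{a}$; and after $\ZZ^{a}$ first reaches $\yy$, each later excursion below $\yy$ (which starts at $\yy-1$) reaches $0$ with probability at most $(1/r')^{\yy-1}$, and there are at most $T$ such excursions in $T$ steps, so a union bound contributes a further $T(1/r')^{\yy-1}$. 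Altogether,
\[
\pr(X\textnormal{ goes extinct}\mid X_t=M)\le (1/r')^{\floor{\dd^{1/3}}}+T(1/r')^{\yy-1}+\frac{rn^{4}}{(r-1)T}.
\]

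It then remains to pick $T$ and check the arithmetic. Taking $T=n^{5}$ and using Observation~\ref{obs:incubator-nm}(i) (so $n\le 2kr\beta^{1/2}\dd$, using $\dd\ge\beta/r$) together with $k\ge\dd^{2}$ and $\yy\ge(\beta k)^{1/3}-1$, I would verify that if $\dd\ge\dd_0$ for a sufficiently large constant $\dd_0=\dd_0(r)$ then each of the three terms is at most $1/(3(r-1)\beta^{1/2}\dd)$: the first because $(1/r')^{\floor{\dd^{1/3}}}$ decays faster than $1/\dd$; the second because $n^{5}(1/r')^{\yy-1}$ decays faster than any polynomial in $k$ while $k\ge\dd_0^{2}$ is large; and the third because $n\ge r\beta^{1/2}\dd^{3}\ge 3r\beta^{1/2}\dd$. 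Summing the three bounds gives $\pr(X\textnormal{ goes extinct}\mid X_t=M)\le 1/(\beta^{1/2}\dd(r-1))$, and since $X$ absorbs almost surely this is exactly the claim.

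The step I expect to be the main obstacle is conceptual rather than computational: because $\ZZ^{a}$ has a reflecting barrier at $\yy$, it reaches $0$ eventually with probability~$1$, so the coupling alone cannot bound the extinction probability --- one must cap the number of ``attempts'' $\ZZ^{a}$ gets at reaching $0$, which is precisely the role of Lemma~\ref{lem:absorb-time}, traded off against the exponentially small per-attempt failure probability. Making the bookkeeping of $I$, $I'$, $\ZTend$ and the absorption time $A$ line up cleanly --- in particular, verifying rigorously that extinction of $X$ genuinely forces $\ZZ^{a}_{I'}=0$ with $I'\le A$ --- is the delicate point; the gambler's-ruin and density estimates afterwards are routine.
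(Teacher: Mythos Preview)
Your proof is correct and follows essentially the same approach as the paper: couple with $\ZZ^{a}$ via Lemma~\ref{lem:Z-coupling}, bound the probability that $\ZZ^{a}$ hits $0$ within a time window $T$ using gambler's ruin (Lemma~\ref{lem:gamblers-ruin}), and bound the probability that $X$ has not absorbed by time $T$ using Lemma~\ref{lem:absorb-time} with Markov. The only differences are cosmetic: the paper chooses $T=\floor{(r')^{(\yy-1)/2}}$ rather than $T=n^{5}$, and the paper uses the \emph{statement} of Lemma~\ref{lem:Z-coupling} directly (if $I>T$ then for $i=T<I$ there is $t'\ge t+T-1$ with $|X_{t'}\cap V_3|\ge\ZZ^{a}_T\ge 1$, so $X_{t+T-1}\ne\emptyset$) rather than reaching inside its proof for $I'$, $\ZTend$ and \eqref{eqn:Z-coupling-prop} as you do---your detour works, but is unnecessary.
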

\begin{proof}
Recall that $\yy = \floor{(k\beta)^{1/3}}$. Let  $\xi = \floor{\dd^{1/3}}$ and 
$\TT = \floor{(r')^{(\gamma-1)/2}}$, and let $\dd_0$ be such that 
$\dd_0 \geq \max\{\beta/r,120,
{((1/\lg r')+1)}^3\}$
and, for all $\dd \ge \dd_0$,
\begin{equation}\label{eqn:b_0}
\frac{1}{{(r')}^{\xi}}
+ \frac{T}{{(r')}^{\gamma-1}}  + 
\frac{16 r^5 k^4 \beta^2 b^4 }{(r-1)(\TT-1)}  \le \frac{1}{\beta^{1/2}\dd(r-1)}.
\end{equation}
(Note that $\dd \geq \dd_0$ implicitly gives a lower bound on~$k$
since $\dd=\dd(k) \leq \sqrt{k}$.)

By Lemma~\ref{lem:gamblers-ruin}, the probability that $\ZZ^\xi$ reaches $\yy$ before zero is
\[\frac{1-(1/r')^{\xi}}{1-(1/r')^\yy} \ge 1 - \frac{1}{(r')^{ \xi}}.\]
Moreover,  
Lemma~\ref{lem:gamblers-ruin} also shows that
the probability that $\ZZ^\xi$ reaches $0$ on any given sojourn from $\yy$ is at most \[
1-\frac{1-{(\frac1{r'})}^{\gamma-1}}{1-{(\frac1{r'})}^\gamma}=
\frac{r'-1}{(r')^\yy-1} \le  \frac{1}{{(r')}^{\gamma-1}}.\]
Thus the probability that $\ZZ^\xi$ 
 never reaches zero 
 when it makes $\TT$ transitions from state $\yy$ is at least
 $$
 \left(1-\frac{1}{{(r')}^{\gamma-1}}\right)^T
 \geq 1-\frac{T}{ {(r')}^{\gamma-1}} $$
Thus the probability that $\ZZ^\xi$ reaches zero from state $\xi$ within $\TT$ transitions is at most
$$\frac{1}{{(r')}^{\xi}} + 
\frac{T}{ {(r')}^{\gamma-1}}.$$ 
If $Z^\xi$ does not reach zero within $T$ transitions
and we couple it with $X$ according to
Lemma~\ref{lem:Z-coupling},
noting that $T<I$,
then there is a $t' \geq t+T-1$
such that $X_{t'}$ is non-empty. 
Thus,
\begin{equation}\label{eqn:fixate-from-full-1}
\pr(X_{t+T-1} = \emptyset \mid X_t = M) \le \frac{1}{{(r')}^{\xi}} + 
\frac{T}{ {(r')}^{\gamma-1}}.
\end{equation}
		
Now, by Lemma~\ref{lem:absorb-time} combined with Markov's inequality, we have
\[\pr(X_{t+T-1} \notin \{\emptyset,V\} \mid X_t = M) \le \frac{r|V|^4}{(r-1)(\TT-1)} 
\leq \frac{r{(2 k r \beta^{1/2} b)}^4}{(r-1)(\TT-1)} =
\frac{16 r^5 k^4 \beta^2 b^4}{(r-1)(\TT-1)}.\] 
(Here the upper bound on $|V|$ follows from Observation~\ref{obs:incubator-nm}.) Hence by \eqref{eqn:fixate-from-full-1} and a union bound, it follows that
\begin{align*}
\pr(X_{t+T-1} \ne V \mid X_t = M) &\le \frac{1}{{(r')}^{\xi}} + 
\frac{T}{ {(r')}^{\gamma-1}}  + \frac{16 r^5 k^4 \beta^2 b^4}{(r-1)(\TT-1)}.
\end{align*}
The result therefore follows from \eqref{eqn:b_0}.
\end{proof}
	
\begin{lemma}\label{lem:fixate-from-leaf}
There exists $\dd_0$ depending only on $r$ such that the following holds whenever $\dd \ge \dd_0$. 
If $X_0 \cap V_1 \ne \emptyset$, then $(X_t)_{t \ge 0}$ fixates with probability at least $1 - 26/(\beta^{1/2}\dd(r-1))$.
\end{lemma}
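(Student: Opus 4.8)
The plan is to obtain this lemma by chaining together the two main results of the subsection: Lemma~\ref{lem:phase-one}, which says the process is likely to reach $\floor{\dd^{1/3}}$ mutants in $V_3$, and Lemma~\ref{lem:fixate-from-full}, which says that from any such configuration fixation is likely. First I would set $\dd_0$ to be the maximum of the constant supplied by Lemma~\ref{lem:fixate-from-full} and the quantity $\max\{((1/\lg r')+1)^3,\,120\}$ required by Lemma~\ref{lem:phase-one}. Since $r'=(1+r)/2$ and $\beta$ depend only on $r$, this $\dd_0$ depends only on $r$, and for every $\dd \ge \dd_0$ both lemmas may be invoked.

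Next, assume $X_0 \cap V_1 \ne \emptyset$ and let $\tau = \min\{t \ge 0 \mid |X_t \cap V_3| \ge \floor{\dd^{1/3}}\}$, with $\tau = \infty$ if no such $t$ exists; this is a stopping time for the Moran process. Lemma~\ref{lem:phase-one} states precisely that $\pr(\tau < \infty) \ge 1 - 25/(\beta^{1/2}\dd(r-1))$. To control the chance of failing to fixate on the event $\tau < \infty$, I would apply the strong Markov property: conditioned on $\tau = t$ and $X_\tau = M$ for any $t \ge 0$ and $M \subseteq V$ with $\pr(\tau=t,\,X_\tau=M)>0$ (so that $|M \cap V_3| \ge \floor{\dd^{1/3}}$ by definition of $\tau$), the post-$\tau$ evolution $(X_{\tau+s})_{s \ge 0}$ is a Moran process started from $M$, and Lemma~\ref{lem:fixate-from-full} gives that it fixates with probability at least $1 - 1/(\beta^{1/2}\dd(r-1))$. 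Averaging over all such pairs $(t,M)$ yields $\pr(X \textnormal{ fixates} \mid \tau < \infty) \ge 1 - 1/(\beta^{1/2}\dd(r-1))$.

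Finally I would take a union bound over the two bad events, ``$\tau = \infty$'' and ``$\tau < \infty$ but $X$ does not fixate'', obtaining
\[
\pr(X \textnormal{ does not fixate}) \le \frac{25}{\beta^{1/2}\dd(r-1)} + \frac{1}{\beta^{1/2}\dd(r-1)} = \frac{26}{\beta^{1/2}\dd(r-1)},
\]
which is the claimed bound. The only genuinely delicate point is invoking Lemma~\ref{lem:fixate-from-full} at the random time $\tau$ rather than at a deterministic time; this is legitimate because the Moran process is a time-homogeneous discrete-time Markov chain, so its conditional fixation probability depends only on the current state, and the strong Markov property transfers the bound verbatim. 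Everything else is routine bookkeeping, so I do not expect any real obstacle in this step.
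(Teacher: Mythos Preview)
Your proposal is correct and is essentially the same argument as the paper's proof, just with the details spelled out: the paper simply invokes Lemmas~\ref{lem:phase-one} and~\ref{lem:fixate-from-full} and subtracts the two error probabilities, leaving the stopping-time/Markov-property justification implicit. Your explicit definition of $\tau$ and invocation of the strong Markov property are exactly the right way to make that step rigorous.
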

\begin{proof}
By Lemma~\ref{lem:phase-one} and Lemma~\ref{lem:fixate-from-full}, when $\dd$ is sufficiently large we have
\[\pr(X\textnormal{ fixates} \mid X_0 \cap V_1 \ne \emptyset) \ge 1 - \frac{25}{\beta^{1/2}\dd(r-1)} - \frac{1}{\beta^{1/2}\dd(r-1)},\]
so the result follows.
\end{proof}

\begin{lemma}\label{lem:fixate-from-clique}
There exists $\dd_0$ depending only on $r$ such that, whenever $\dd \ge \dd_0$, for all $x_0 \in V_3$,
\[
\pr(X\textnormal{ fixates}\mid X_0 = \{x_0\}) \ge 1-2/r.
\]
\end{lemma}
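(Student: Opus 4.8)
The plan is to reduce everything to Lemma~\ref{lem:fixate-from-full}: I would show that, started from a single mutant at a vertex of $V_3$, the process reaches a configuration with at least $\floor{\dd^{1/3}}$ mutants in $V_3$ with probability at least $1-1/r'$, and then apply Lemma~\ref{lem:fixate-from-full} from that point on. Take $\dd_0$ to be at least the constant of Lemma~\ref{lem:fixate-from-full}; as we will see, no essentially stronger lower bound on $\dd$ is needed, since the standing assumption $\dd \ge 6r$ already absorbs the remaining numerics.

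For the first part I would use the gambler's-ruin chain $\ZZ^1$ of Definition~\ref{def:Z} together with the coupling of Lemma~\ref{lem:Z-coupling}. Since $x_0 \in V_3$ we have $1 \le \min\{|\{x_0\}\cap V_3|,\gamma\}$, so Lemma~\ref{lem:Z-coupling} applies with $t=0$, $M=\{x_0\}$ and $z=1$, producing a coupling under which, writing $I=\min\{i \mid \ZZ^1_i=0\}$, every $i<I$ admits a time $t' \ge i-1$ with $|X_{t'}\cap V_3| \ge \ZZ^1_i$. Put $a=\floor{\dd^{1/3}}$; since $\dd \le \sqrt k$ one checks $\dd^{1/3}\le k^{1/6}\le(\beta k)^{1/3}$, so $a \le \gamma$. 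Let $J=\min\{i \mid \ZZ^1_i \in \{0,a\}\}$ and let $\calE=\{\ZZ^1_J=a\}$ be the event that $\ZZ^1$ reaches $a$ before returning to $0$. Before time $J$ the chain $\ZZ^1$ lies in $\{1,\dots,a-1\}\subseteq\{1,\dots,\gamma-1\}$ and from every such state moves up with probability $r'/(1+r')$ and down with probability $1/(1+r')$, so by Lemma~\ref{lem:gamblers-ruin} (absorbing barriers $0$ and $a$, start $1$, $q/p=1/r'$),
\[
\pr(\calE)=\frac{1-1/r'}{1-(1/r')^{a}}\ \ge\ 1-\frac{1}{r'}.
\]
On $\calE$ the chain avoids $0$ up to and including time $J$, so $J<I$ and $\ZZ^1_J=a$; hence the coupling yields a time $t'$ with $|X_{t'}\cap V_3|\ge a=\floor{\dd^{1/3}}$. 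Writing $\sigma=\min\{t\ge 0 \mid |X_t\cap V_3|\ge\floor{\dd^{1/3}}\}$, which is a stopping time of $X$, we conclude $\pr(\sigma<\infty \mid X_0=\{x_0\}) \ge \pr(\calE) \ge 1-1/r'$.

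To finish I would apply the strong Markov property at $\sigma$: on $\{\sigma<\infty\}$ the configuration $X_\sigma$ has $|X_\sigma\cap V_3|\ge\floor{\dd^{1/3}}$, so Lemma~\ref{lem:fixate-from-full} gives $\pr(X\text{ fixates}\mid X_\sigma)\ge 1-1/(\beta^{1/2}\dd(r-1))$, whence
\[
\pr(X\text{ fixates}\mid X_0=\{x_0\}) \ \ge\ \Bigl(1-\tfrac1{r'}\Bigr)\Bigl(1-\tfrac1{\beta^{1/2}\dd(r-1)}\Bigr).
\]
Since $1-1/r'=(r-1)/(r+1)$ and $(r-1)/(r+1)-(1-2/r)=2/(r(r+1))$, the right-hand side is at least $1-2/r$ as soon as $1/(\beta^{1/2}\dd(r-1))\le 2/(r(r-1))$, i.e.\ $\dd\ge r/(2\beta^{1/2})$, which is implied by $\dd\ge 6r$ because $\beta\ge 26$. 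Thus $\dd_0$ equal to the constant of Lemma~\ref{lem:fixate-from-full} suffices and the lemma follows.

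I expect the only delicate points to be bookkeeping ones: checking that $\calE$ forces $J<I$ so that the conclusion of Lemma~\ref{lem:Z-coupling} is available at $i=J$; confirming $a\le\gamma$ so the relevant states of $\ZZ^1$ carry the gambler's-ruin transition probabilities; and verifying that $\sigma$ is a stopping time so the strong Markov step is legitimate. None of these is hard, and every numerical hypothesis demanded by the cited lemmas is covered by $\dd\ge\max\{6r,\dd_0\}$.
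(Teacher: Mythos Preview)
Your argument is correct and follows essentially the same route as the paper: couple with the gambler's ruin $\ZZ^1$ via Lemma~\ref{lem:Z-coupling}, use Lemma~\ref{lem:gamblers-ruin} to get probability at least $1-1/r'$ of reaching the target level, then invoke Lemma~\ref{lem:fixate-from-full} and combine. The only cosmetic difference is that the paper takes the target level to be $\gamma$ rather than your $\lfloor \dd^{1/3}\rfloor$, and bounds the combination by $1-1/r'-1/(\beta^{1/2}\dd(r-1))$ (a union-style estimate) rather than your product; both choices lead to the same conclusion with no extra work.
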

\begin{proof}
Let $T = \min\{t > 0 \mid |X_t \cap V_3| =\yy\}$.
By Lemma~\ref{lem:gamblers-ruin},
the probability that $Z^1$ reaches $\yy$ before $0$ is at least $1-1/r'$. Thus by Lemma~\ref{lem:Z-coupling}, we have
\begin{equation}\label{eqn:fixate-from-clique}
\pr(T < \infty \mid X_0 = \{x_0\}) \ge 1 - 1/r'.
\end{equation}
Moreover, by Lemma~\ref{lem:fixate-from-full}, when $\dd$ is sufficiently large, for all $t > 0$ and all $M \subseteq V$ with $|M \cap V_3| \ge \yy$, we have
\[
\pr(X\textnormal{ fixates} \mid T = t, X_t = M, X_0 = \{x_0\}) \ge 1 - \frac{1}{\beta^{1/2}\dd(r-1)}.
\]
Summing over all possible values of $t$ and $M$, we obtain
\[
\pr(X\textnormal{ fixates} \mid T < \infty, X_0 = \{x_0\}) \ge 1 - \frac{1}{\beta^{1/2}\dd(r-1)}.
\]
Thus by \eqref{eqn:fixate-from-clique}, taking $\dd_0 \ge (r+1)/\sqrt{r-1}$, it follows that
\[
\pr(X\textnormal{ fixates}\mid X_0 = \{x_0\}) \ge 1 - \frac{1}{r'} - \frac{1}{\beta^{1/2}\dd(r-1)} \ge 1-\frac{2}{r+1} - \frac{1}{5r(r+1)} \ge 1 - \frac{2}{r}.
\]
\end{proof}

\subsection{Putting it all together}

We can now prove Theorem~\ref{thm:sparse-incubator},
which we restate for convenience.
\begin{thmsparseincubator}
\statethmsparseincubator{}
\end{thmsparseincubator}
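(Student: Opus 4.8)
The plan is to decompose the extinction probability according to the location of the initial mutant and to control each piece using the lemmas already proved in this section. Since $\ell_r(G) = \tfrac1n\sum_{v\in V}\ell_r(v)$, it suffices to bound $\sum_{v\in V_1}\ell_r(v)$, $\sum_{v\in V_2}\ell_r(v)$ and $\sum_{v\in V_3}\ell_r(v)$ separately. I would take $b_0$ to be the maximum of $\beta/r$ and the constants $\dd_0$ appearing in Lemmas~\ref{lem:fixate-from-leaf} and~\ref{lem:fixate-from-clique}, so that those lemmas and Observation~\ref{obs:incubator-nm} all apply with $\dd = \dd(k) \ge b_0$.

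For the three sums: a single mutant $v\in V_1$ has $X_0\cap V_1\ne\emptyset$, so Lemma~\ref{lem:fixate-from-leaf} gives $\ell_r(v)\le 26/(\beta^{1/2}\dd(r-1))$; combined with $|V_1| = k\ceil{r\sqrt{\beta}\dd}\le 2kr\sqrt{\beta}\dd$ this yields $\sum_{v\in V_1}\ell_r(v)\le 52kr/(r-1)$. For $v\in V_3$, Lemma~\ref{lem:fixate-from-clique} gives $\ell_r(v)\le 2/r$, so $\sum_{v\in V_3}\ell_r(v)\le 2\beta k/r$. For $v\in V_2$ there is no corresponding lemma, but this is harmless since $|V_2|=k$ is tiny compared with $n$: the trivial bound $\ell_r(v)\le 1$ gives $\sum_{v\in V_2}\ell_r(v)\le k$. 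Adding these up and using $\beta = 26\ceil{r^2/(r-1)}\le 52r^2/(r-1)$ to absorb the $V_2$ and $V_3$ terms into an $O(r/(r-1))$ factor, I would obtain $\sum_{v\in V}\ell_r(v)\le 208kr/(r-1)$.

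To finish, I would convert this into a bound in terms of $n$ and $m$ using Observation~\ref{obs:incubator-nm}: since $\dd\ge b_0\ge\beta/r$, it gives $n\ge kr\beta^{1/2}\dd$ and $m\le\beta^2k\dd^2$, whence $n^2/m\ge kr^2/\beta$, i.e. $k/n\le(\beta/r^2)(n/m)$. Then
\[
\ell_r(G) = \frac1n\sum_{v\in V}\ell_r(v)\le\frac{208kr}{n(r-1)}\le\frac{208\beta}{r(r-1)}\cdot\frac{n}{m}\le\frac{208\cdot 52\,rn}{(r-1)^2m}<\frac{2^{14}rn}{(r-1)^2m},
\]
which is the claim. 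The only thing requiring care here is the bookkeeping with $\beta$: checking that the slack in $\beta = 26\ceil{r^2/(r-1)}$ and in the constant $2^{14}$ suffices for all three contributions simultaneously. I do not expect any conceptual obstacle, since all the real probabilistic content is already contained in Lemmas~\ref{lem:fixate-from-leaf} and~\ref{lem:fixate-from-clique} (and the elementary counting in Observation~\ref{obs:incubator-nm}), and what remains is routine arithmetic.
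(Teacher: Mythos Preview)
Your proposal is correct and follows essentially the same approach as the paper: decompose $\ell_r(G)$ according to whether the initial mutant lies in $V_1$, $V_2$, or $V_3$, apply Lemmas~\ref{lem:fixate-from-leaf} and~\ref{lem:fixate-from-clique} (with the trivial bound on $V_2$), and then convert via Observation~\ref{obs:incubator-nm} and $\beta\le 52r^2/(r-1)$. The only cosmetic difference is that the paper first collects everything into a bound of the form $4\beta^{1/2}/(r^2\dd)$ and then substitutes $1/\dd \le \beta^{3/2}n/(rm)$, whereas you keep the bound in terms of $k$ and substitute $k/n\le(\beta/r^2)(n/m)$; both routes land on the same constant $4\cdot 52^2 = 10816 < 2^{14}$.
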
 
\begin{proof} 
By Lemmas~\ref{lem:fixate-from-leaf} and~\ref{lem:fixate-from-clique}, when $\dd(k)$ is sufficiently large we have
\begin{align*}
\ell_r(G) &\leq \frac{|V_3|}{n}\cdot\frac{2}{r} + \frac{|V_2|}{n} + \frac{|V_1|}{n}\left(\frac{26}{\beta^{1/2}\dd(k)(r-1)} \right) \leq \frac{3\beta k}{rn} + \frac{26}{\beta^{1/2}\dd(k)(r-1)}.
\end{align*}
Since Observation~\ref{obs:incubator-nm} implies that $n \ge kr\beta^{1/2}\dd(k)$, it follows that
\begin{equation}\label{eqn:tight-bound}
\ell_r(G) \leq \frac{3\beta^{1/2}}{r^2\dd(k)} + \frac{26}{\beta^{1/2}\dd(k)(r-1)} \le \frac{4\beta^{1/2}}{r^2\dd(k)}.
\end{equation}
Since Observation~\ref{obs:incubator-nm} implies that $m/n \le \beta^{3/2}\dd(k)/r$, it follows that
\begin{align*}
\ell_r(G) &\leq \frac{\beta^{3/2}\dd(k) n}{rm} \cdot \frac{4\beta^{1/2}}{r^2\dd(k)} = \frac{4\beta^2n}{r^3m}.
\end{align*}
Since $r^2/(r-1) > 1$, we have $\beta \le 52r^2/(r-1)$ and hence
\[
\ell_r(G) \leq \frac{2^{14}rn}{(r-1)^{2}m},
\]
as required.
\end{proof}

Finally, we prove Theorem~\ref{thm:incubator}.

\begin{thmincubator}
\statethmincubator
\end{thmincubator}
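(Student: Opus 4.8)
The plan is to deduce the theorem from the internal inequality~\eqref{eqn:tight-bound} established inside the proof of Theorem~\ref{thm:sparse-incubator}, specialising the branching factor to $\dd(k)=\floor{\sqrt{k}}$ and restricting attention to parameters~$k$ that are perfect squares, so that $\dd(k)=\sqrt{k}$ holds \emph{exactly}. Concretely, fix $r>1$, let $\beta=26\ceil{r^2/(r-1)}$ as usual, and choose a constant $\dd_0$, depending only on~$r$, large enough to exceed $10^7$, to exceed $\beta/r$, and to exceed the constant $b_0$ of Theorem~\ref{thm:sparse-incubator}; then \eqref{eqn:tight-bound}, Lemmas~\ref{lem:fixate-from-leaf} and~\ref{lem:fixate-from-clique}, Theorem~\ref{lem:incubators-exist} and Observation~\ref{obs:incubator-nm} all apply to any $G\in\mathcal{I}_{r,\dd}$ whose parameter~$k$ satisfies $\dd(k)\ge\dd_0$. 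Set $\dd(k)=\floor{\sqrt{k}}$ and
\[
\mathcal{D}_r=\{\,G\in\mathcal{I}_{r,\dd}\mid \text{the parameter }k\text{ of }G\text{ is a perfect square and }\dd(k)\ge\dd_0\,\}.
\]
For perfect-square~$k$ we have $\dd(k)=\sqrt{k}$, so there are infinitely many admissible~$k$; by Theorem~\ref{lem:incubators-exist}, $\mathcal{I}_{r,\dd}$ contains an incubator with parameter~$k$ for each such~$k$, and hence $\mathcal{D}_r$ is infinite.

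Now fix $G\in\mathcal{D}_r$ with $n$ vertices and parameter~$k$. The single structural fact I would use is that, since $k$ is a perfect square, $\dd(k)=\sqrt{k}$ and therefore $k=\dd(k)^2$. Feeding this into Observation~\ref{obs:incubator-nm}(i) (valid because $\dd(k)\ge\dd_0\ge\beta/r$) gives $n\le 2kr\beta^{1/2}\dd(k)=2r\beta^{1/2}\dd(k)^3$, and hence $1/\dd(k)\le (2r\beta^{1/2}/n)^{1/3}$. Substituting this into~\eqref{eqn:tight-bound}, which reads $\ell_r(G)\le 4\beta^{1/2}/(r^2\dd(k))$, yields
\[
\ell_r(G)\le\frac{4\beta^{1/2}}{r^2}\cdot\frac{2^{1/3}r^{1/3}\beta^{1/6}}{n^{1/3}}=\frac{4\cdot 2^{1/3}\,\beta^{2/3}}{r^{5/3}n^{1/3}}.
\]
Finally I would absorb the powers of~$\beta$: since $r^2/(r-1)\ge 1$ we have $\beta\le 52r^2/(r-1)$, so $\beta^{2/3}\le 52^{2/3}r^{4/3}/(r-1)^{2/3}$, and therefore
\[
\ell_r(G)\le\frac{4\cdot 2^{1/3}\cdot 52^{2/3}}{r^{1/3}(r-1)^{2/3}n^{1/3}}=\frac{(2^{11}\cdot 13^2)^{1/3}}{(r(r-1)^2n)^{1/3}}<\frac{71}{(r(r-1)^2n)^{1/3}}=\denselowerbound,
\]
using $2^{11}\cdot 13^2=346112<357911=71^3$. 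This is exactly the claimed bound, so the argument would be complete.

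The hard part is not conceptual but arithmetic: the constant is tight --- $(2^{11}\cdot 13^2)^{1/3}\approx 70.2$ against the target $71$ --- so one must not lose anything in passing from~\eqref{eqn:tight-bound} through Observation~\ref{obs:incubator-nm}(i) and the estimate $\beta\le 52r^2/(r-1)$, which is precisely why one invokes the sharp internal inequality~\eqref{eqn:tight-bound} rather than the weaker headline bound of Theorem~\ref{thm:sparse-incubator}. The only other point requiring care is checking that a single $\dd_0$ can meet, simultaneously, the hypotheses of Theorem~\ref{lem:incubators-exist} (so that $\mathcal{D}_r$ is genuinely infinite), of Observation~\ref{obs:incubator-nm}, and of the lemmas underpinning~\eqref{eqn:tight-bound}; since every such requirement has the form ``$\dd(k)$ at least some function of~$r$'', taking the maximum suffices.
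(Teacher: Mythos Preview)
Your proof is correct and follows essentially the same route as the paper: define $\mathcal{D}_r$ via $\dd(k)=\lfloor\sqrt{k}\rfloor$ restricted to perfect-square~$k$, invoke~\eqref{eqn:tight-bound}, convert $\dd(k)$ to $n$ via Observation~\ref{obs:incubator-nm}(i), and finish with $\beta\le 52r^2/(r-1)$ and the arithmetic check $2^{11}\cdot13^2<71^3$. The one place you differ is the infiniteness of~$\mathcal{D}_r$: the paper observes directly that for perfect-square~$k$ the required small-set expander on~$V_3$ has degree $\beta k-1=|V_3|-1$ and is therefore the clique (and $G[V_2,V_3]$ is complete bipartite), so such incubators exist trivially; you instead appeal to Theorem~\ref{lem:incubators-exist}, but note that its \emph{statement} only promises infinitely many graphs in $\mathcal{I}_{r,\dd}$, not one for each admissible~$k$, so strictly speaking you are using its proof rather than its statement---the paper's direct observation sidesteps this nicety.
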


\begin{proof}
Let $\dd(k) = \floor{\sqrt{k}}$.  Consider any $r>1$ and let the constant $\dd_0$ 
(depending on~$r$) be the one from the statement of Theorem~\ref{thm:sparse-incubator}. Define
\[
\mathcal{D}_r = \{G \in \mathcal{I}_{r,\dd} \mid \mbox{The parameter, $k$, of $G$ is the square of an integer and } \dd(k) \ge \dd_0\}.
\]
Note that in the definition of $\mathcal{I}_{r,\dd}$ (Definition~\ref{def:incubator}), when $k$ is a square integer, $G[V_2,V_3]$ is a complete bipartite graph and $G[V_3]$ is a clique. Thus $\mathcal{D}_r$ is an infinite family.

Consider any $G \in \mathcal{D}_r$. Note that by Observation~\ref{obs:incubator-nm}, $n \le 2k^{3/2}r\beta^{1/2}$, and hence $k^{1/2} \ge (n/(2r\beta^{1/2}))^{1/3}$. Moreover, as in \eqref{eqn:tight-bound}, we have $\ell_r(G) \le 4\beta^{1/2}/(r^2\dd(k))$. It follows that
\[
\ell_r(G) \leq \frac{4\beta^{1/2}}{r^2}\cdot\frac{2^{1/3}r^{1/3}\beta^{1/6}}{n^{1/3}} = \frac{2^{7/3}\beta^{2/3}}{r^{5/3}n^{1/3}} \le \frac{71}{r^{1/3}(r-1)^{2/3}n^{1/3}},
\] 
and so the result follows. (The final inequality uses $\beta \le 52r^2/(r-1)$
as in the proof of Theorem~\ref{thm:sparse-incubator}.)
\end{proof}
	
\section{Lower  bounds on extinction probability}\label{sec:lowerextinct}
	
In this section, we prove Theorems~\ref{thm:digraph-lower},
\ref{thm:super-tight-upper} and~\ref{thm:sparseUB} which give lower
bounds on extinction probability.
The proofs of our theorems rely on the following quantity, which has also been studied 
in the undirected case in~\cite{MSNatural,MSStrong}.
\begin{definition}\label{defn:Q}
Given a digraph $G=(V,E)$, we define the \emph{danger}
of any vertex~$v$ as
\[
Q_v = \sum_{u\in \Nin(v)} \tfrac{1}{\dout(u)}.
\] 
\end{definition}

Note that the danger of~$v$ is essentially  
the rate at which $v$ dies when all of its in-neighbours are non-mutants. 
The following observation is immediate, since 
$Q_{u}/(r+Q_{u})$ is 
 the probability that $u$ dies before spawning a mutant when the Moran process is 
 run from state~$\{u\}$. 

\begin{observation}\label{obs:new}
Let $G = (V,E)$ be a   digraph  
with a vertex $u\in V$. 
Then
$$\ell_r(u) \geq Q_{u}/(r+Q_{u}).$$
\end{observation}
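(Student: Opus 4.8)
The plan is to run the Moran process from the single-mutant configuration $\{u\}$ and to observe that extinction has already happened on the event that $u$ is overwritten by a non-mutant before it ever copies its own state onto a neighbour. Since from this event alone we get a lower bound on $\ell_r(u)$, the whole argument is an elementary competing-rates calculation.

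First I would enumerate the transitions that leave the state $\{u\}$. While the configuration is $\{u\}$, a step changes the configuration in exactly one of two ways: (i) vertex $u$ is selected --- probability $r/W(\{u\})$ --- and then spawns a mutant onto one of its out-neighbours, moving to a two-mutant configuration (here I use that $G$ is strongly connected, so $\dout(u)\ge 1$); or (ii) some in-neighbour $w$ of $u$ is selected --- probability $1/W(\{u\})$ for each such $w$, since $w$ is a non-mutant --- and then spawns a non-mutant onto $u$, which it does with probability $1/\dout(w)$, moving to the empty configuration $\emptyset$. Every other step leaves the configuration unchanged. Summing the probabilities in case (ii) over $w\in\Nin(u)$ gives a per-step probability $Q_u/W(\{u\})$ for this event, by Definition~\ref{defn:Q}.

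Next I would condition on the first step at which the configuration changes. Since the process is a time-homogeneous Markov chain and all ``do-nothing'' steps may be ignored, the probability that this first change is of type (ii) rather than of type (i) is
\[
\frac{Q_u/W(\{u\})}{r/W(\{u\})+Q_u/W(\{u\})}=\frac{Q_u}{r+Q_u}.
\]
On this event the process has reached $\emptyset$, which is an absorbing state; hence this event is contained in the event that the process reaches extinction, and therefore $\ell_r(u)\ge Q_u/(r+Q_u)$.

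There is essentially no obstacle here: the only point requiring a moment's care is that this bound counts only the contribution of extinction occurring directly from $\{u\}$ --- it ignores runs that first enlarge the mutant set and only later die out --- which is precisely why the statement is an inequality and not an equality. The argument also uses $\dout(u)\ge 1$ and $\din(u)\ge 1$ implicitly, both of which are guaranteed by the standing assumption that $G$ is strongly connected, so nothing beyond the computation above is needed.
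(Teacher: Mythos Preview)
Your proof is correct and matches the paper's own justification exactly: the paper states the observation is immediate because $Q_u/(r+Q_u)$ is the probability that $u$ dies before spawning a mutant when the Moran process is run from state $\{u\}$, which is precisely the competing-rates calculation you carried out. Your remark about needing $\dout(u)\ge 1$ and $\din(u)\ge 1$ is harmless but not strictly necessary, since when $Q_u=0$ the bound is trivial and when $\dout(u)=0$ the mutant can never spawn, so the inequality still holds.
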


The following   lemma   gives a lower bound on $\ell_r(\{u,v\})$,
the extinction probability 
when the Moran process is run from state $\{u,v\}$. The lower bound is based on crudely
ignoring every situation except the  one in which 
the first state change is the death of the mutant at~$v$. Though this is crude, it turns out to suffice
for our purposes.
For other situations in which such arguments have been used, see Theorem~1 of~\cite{MSNatural}.

\begin{lemma}\label{lem:new}
Suppose $r \geq 1$. 
Let $G = (V,E)$ be a   digraph  
with a vertex $u\in V$ satisfying
$\ell_r(u) \leq 1/2$
and a vertex $v \in \Nout(u)$.
Then 
$$ 
\ell_r(\{u,v\}) \geq  \left(1 - \frac{3r}{2r+Q_v}\right)\ell_r(u).
$$
\end{lemma}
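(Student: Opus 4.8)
The plan is to run the Moran process $(X_t)_{t\ge 0}$ from the initial state $\{u,v\}$, apply the strong Markov property at the first time $\sigma\ge 1$ with $X_\sigma\ne\{u,v\}$, and retain only the single favourable outcome $X_\sigma=\{u\}$, which corresponds to the mutant at $v$ dying before anything else happens. Since $\ell_r(X_\sigma)\ge 0$ always and $\ell_r(\{u\})=\ell_r(u)$, this yields $\ell_r(\{u,v\})\ge\pr(X_\sigma=\{u\})\,\ell_r(u)$, so it suffices to show $\pr(X_\sigma=\{u\})\ge 1-3r/(2r+Q_v)$. I would first dispose of the trivial case: if $Q_v<r$ then $1-3r/(2r+Q_v)<0\le\ell_r(\{u,v\})$, so the bound holds; hence assume $Q_v\ge r$ from now on. The hypothesis $\ell_r(u)\le 1/2$ enters exactly once: combined with Observation~\ref{obs:new} it gives $Q_u/(r+Q_u)\le 1/2$, i.e.\ $Q_u\le r$.

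Next I would compute the relevant one-step transition probabilities from $\{u,v\}$, writing $W=W(\{u,v\})$ and $d=\dout(u)\ge 1$. The process moves to $\{u\}$ precisely when some non-mutant in-neighbour of $v$ is selected and spawns onto $v$; since $u\in\Nin(v)$ (because $v\in\Nout(u)$), the probability of this is $p_{\to u}=\tfrac1W\sum_{w\in\Nin(v)\setminus\{u\}}\tfrac1{\dout(w)}=\tfrac1W\bigl(Q_v-\tfrac1d\bigr)$. It moves to $\{v\}$ only if a non-mutant in-neighbour of $u$ spawns onto $u$, which has probability at most $Q_u/W\le r/W$; and it produces a third mutant with probability at most $\tfrac rW\bigl(1-\tfrac1d\bigr)+\tfrac rW$, the first term being reduced because $u$'s out-neighbour $v$ is already a mutant. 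These three events are disjoint and their union is exactly the event that the state changes, so the per-step probability $p_\star$ of leaving $\{u,v\}$ satisfies $p_\star\le\tfrac1W\bigl(Q_v+3r-\tfrac{1+r}{d}\bigr)$. Since $p_{\to u}\ge\tfrac1W(r-1)>0$ we have $p_\star>0$, so $\sigma<\infty$ almost surely and, summing a geometric series, $\pr(X_\sigma=\{u\})=p_{\to u}/p_\star$.

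It remains to check $p_{\to u}/p_\star\ge(Q_v-r)/(2r+Q_v)$. Clearing the two positive denominators, this is equivalent to $(Q_v-\tfrac1d)(2r+Q_v)\ge(Q_v-r)\bigl(Q_v+3r-\tfrac{1+r}{d}\bigr)$, and after expansion the difference of the two sides equals $\tfrac rd(Q_v-3-r)+3r^2$. If $Q_v\ge r+3$ this is at least $3r^2>0$; if instead $Q_v<r+3$, then $Q_v-3-r<0$, so using $1/d\le 1$ the difference is at least $r(Q_v-3-r)+3r^2=r(Q_v+2r-3)\ge 3r(r-1)>0$ by $Q_v\ge r$. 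In all cases $p_{\to u}/p_\star\ge(Q_v-r)/(2r+Q_v)=1-3r/(2r+Q_v)$, which finishes the proof.

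I expect the main difficulty to be quantitative rather than conceptual: the Markov-property step is essentially forced, but one must bound $p_\star$ carefully. In particular, overlooking that the edge $u\to v$ already points to a mutant (so that $u$ cannot spawn a fresh mutant along it) costs a factor in the denominator and produces the too-weak bound with $3r+Q_v$ in place of $2r+Q_v$; it is exactly the $1/d$ savings in $p_\star$, together with $Q_u\le r$, that makes the final inequality close.
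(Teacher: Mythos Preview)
Your argument is correct and uses the same idea as the paper: run from $\{u,v\}$, stop at the first interesting step, and keep only the branch where $v$ dies, yielding $\ell_r(\{u,v\})\ge\pr(X_\sigma=\{u\})\,\ell_r(u)$. The paper's execution is a little cleaner. Instead of waiting for the state to actually change, it stops at the first time one of the four events ``$u$ spawns'', ``$v$ spawns'', ``a non-mutant kills $u$'', or ``a non-mutant kills $v$'' occurs (even if, say, $u$ spawns onto $v$ and the state is unchanged). This gives directly
\[
\ell_r(\{u,v\})\ \ge\ \frac{\overline{Q}_v}{2r+\overline{Q}_u+\overline{Q}_v}\,\ell_r(u),
\]
where $\overline{Q}_v=Q_v-1/\dout(u)$ and $\overline{Q}_u\le Q_u\le r$; then the single estimate $\overline{Q}_v\ge Q_v-r$ and monotonicity of $x\mapsto x/(3r+x)$ finish in one line, with no separate disposal of the case $Q_v<r$ and no algebraic case analysis. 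In particular, your closing remark that the $1/d$ saving in the denominator of $p_\star$ is what ``makes the final inequality close'' is not accurate: the paper's coarser stopping time shows that saving is unnecessary. (One tiny quibble: your strict inequalities $p_{\to u}>0$ and $3r(r-1)>0$ fail at $r=1$, but since the target inequality is non-strict this is harmless.)
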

\begin{proof}
 Let $W= n+2(r-1)$,
 \begin{equation*}
			\overline{Q}_{u} = \sum_{w \in \Nin(u) \setminus \{v\}} \frac{1}{\dout(w)},\qquad \mbox{and}\qquad
			\overline{Q}_v = \sum_{w \in \Nin(v) \setminus \{u\}} \frac{1}{\dout(w)}.
\end{equation*}

 We consider four events which may occur when the Moran process is run, starting from state $\{u,v\}$:
 \begin{itemize}
 \item Vertex $u$ reproduces  (probability $r/W$),
 \item Vertex $v$ reproduces  (probability $r/W$),
 \item Some vertex in  $\Nin(u) \setminus \{v\}$ reproduces   onto~$u$ (probability $\overline{Q}_u/W$),
  \item Some vertex in  $\Nin(v) \setminus \{u\}$ reproduces onto~$v$ (probability $\overline{Q}_v/W$),
 \end{itemize}
 Note that any other event leaves the state unchanged. Thus,
 $\ell_r(\{u,v\}) $ is at least the probability that the  last of these happens first, before the others,
 multiplied by $\ell_r(u)$, which 
is the extinction probability 
from the resulting state (which is $\{u\}$).
Thus, 
$$\ell_r(\{u,v\})  \geq \frac{\overline{Q}_v}{2r+\overline{Q}_{u}+\overline{Q}_v}\ell_r(u).$$

Note that $\overline{Q}_{u}  \leq r$ 
(clearly $\overline{Q}_u \leq Q_u$ and Observation~\ref{obs:new}, together with $\ell_r(u) \leq 1/2$, 
implies  $Q_{u} \leq r$). Also, $\overline{Q}_v = Q_v - 1/\dout(v) \geq Q_v - r$. Hence
\[
	\ell_r(\{u,v\}) \geq \frac{\overline{Q}_v}{3r+\overline{Q}_v}\ell_r(u) = \left(1 - \frac{3r}{3r+\overline{Q}_v} \right)\ell_r(u) \geq \left(1 - \frac{3r}{2r+Q_v}\right)\ell_r(u),
\]
as required.
\end{proof}

We next use Lemma~\ref{lem:new} to
derive an upper bound on the  danger of a vertex.  
	  	 
\begin{lemma}\label{lem:fixation-props}
Suppose $r\geq 1$.
Let $G = (V,E)$ be a strongly-connected digraph with 
$|V| \geq 2$, 
and suppose that $u \in V$
satisfies $\ell_r(u) \leq 1/4$.
Then $$Q_{u} \leq \frac{4r\ell_r(u)}{\dout(u)}\sum_{v \in \Nout(u)}\frac{r}{2r+Q_v}.$$
\end{lemma}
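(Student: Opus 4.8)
The plan is to analyse the first \emph{state-changing} step of the Moran process started from the single-mutant state $\{u\}$, turn this into an exact identity for $\ell_r(u)$ in terms of the two-mutant quantities $\ell_r(\{u,v\})$ and of $Q_u$, and then substitute the lower bound on $\ell_r(\{u,v\})$ furnished by Lemma~\ref{lem:new}.

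First I would set up the one-step recursion. Since $G$ is strongly connected and $|V|\ge 2$, we have $\dout(u)\ge 1$ and $\Nin(u)\ne\emptyset$ (so $Q_u>0$). From state $\{u\}$ the total fitness is $W=n+r-1$. The relevant transitions are: $u$ is chosen (probability $r/W$) and reproduces onto a uniformly random out-neighbour $v$, moving to state $\{u,v\}$; or some in-neighbour $w$ of $u$ is chosen (probability $1/W$ each) and reproduces onto $u$ with probability $1/\dout(w)$, giving extinction; every other event leaves the state unchanged. Thus the probability of a genuine transition is $(r+Q_u)/W$, and conditioning on the first such transition and applying the Markov property gives the exact identity
\[
\ell_r(u)=\frac{Q_u}{r+Q_u}+\frac{r}{(r+Q_u)\dout(u)}\sum_{v\in\Nout(u)}\ell_r(\{u,v\}).
\]
(Here I use the standard convention that $G$ has no loops, so that $|\Nout(u)|=\dout(u)$ is the correct normalisation; $\ell_r$ is well defined from every two-mutant state since $G$ is finite and strongly connected.)

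Next I would rearrange this identity: multiplying through by $r+Q_u$ and collecting the $Q_u$ terms yields
\[
Q_u\bigl(1-\ell_r(u)\bigr)=r\,\ell_r(u)-\frac{r}{\dout(u)}\sum_{v\in\Nout(u)}\ell_r(\{u,v\}).
\]
Since $\ell_r(u)\le 1/4\le 1/2$, Lemma~\ref{lem:new} applies to every $v\in\Nout(u)$, giving $\ell_r(\{u,v\})\ge\bigl(1-\tfrac{3r}{2r+Q_v}\bigr)\ell_r(u)$. Noting that $\ell_r(\{u,v\})$ enters with a negative sign, substituting this bound and using $\sum_{v\in\Nout(u)}1=\dout(u)$ makes the two copies of $r\,\ell_r(u)$ cancel, leaving
\[
Q_u\bigl(1-\ell_r(u)\bigr)\le\frac{3r\,\ell_r(u)}{\dout(u)}\sum_{v\in\Nout(u)}\frac{r}{2r+Q_v}.
\]
Finally, $\ell_r(u)\le 1/4$ gives $1-\ell_r(u)\ge 3/4$, so dividing by $1-\ell_r(u)$ costs at most a factor $4/3$, converting the constant $3$ into $4$ and yielding the claimed inequality.

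I do not expect a serious obstacle. The only delicate points are the bookkeeping in the one-step recursion --- in particular, justifying that the many ``nothing changes'' outcomes can be ignored by conditioning on the first genuine transition --- and checking that the single hypothesis $\ell_r(u)\le 1/4$ does double duty: it is strong enough to invoke Lemma~\ref{lem:new} (which only needs $\le 1/2$) and also to absorb the factor $1/(1-\ell_r(u))$ into the constant. Everything else is routine algebra.
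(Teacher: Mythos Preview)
Your proposal is correct and follows essentially the same route as the paper: derive the exact one-step identity for $\ell_r(u)$, plug in the lower bound on $\ell_r(\{u,v\})$ from Lemma~\ref{lem:new}, and use $\ell_r(u)\le 1/4$ to turn the constant $3$ into $4$. The only difference is cosmetic ordering of the algebra --- you rearrange before substituting, the paper substitutes before rearranging --- and the resulting computations are identical.
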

\begin{proof}  
From the definition of the Moran process,
$$	\ell_r(u) = \frac{Q_u}{r+Q_u} + \frac{r}{r+Q_u}\cdot\frac{1}{\dout(u)}\sum_{v \in \Nout(u)} \ell_r(\{u,v\}).$$
It then follows from Lemma~\ref{lem:new} that 
\begin{align*}
\ell_r(u) &\geq \frac{Q_u}{r+Q_u} + \frac{r}{r+Q_u}\cdot\frac{1}{\dout(u)}\sum_{v \in \Nout(u)} \left(1 - \frac{3r}{2r+Q_v}\right)\ell_r(u)\\
&= \frac{Q_u}{r+Q_u} + \frac{r}{r+Q_u}\ell_r(u) - \frac{r}{r+Q_u}\cdot\frac{1}{\dout(u)}\sum_{v \in \Nout(u)} \frac{3r}{2r+Q_v}\ell_r(u).
\end{align*}
Multiplying by $r+Q_u$ and rearranging, we obtain
$$\frac{r}{\dout(u)}\sum_{v \in \Nout(u)} \frac{3r}{2r+Q_v}\ell_r(u) \geq (1-\ell_r(u))Q_u.$$
Since $\ell_r(u) \leq 1/4$, we have $3 /(1-\ell_r(u)) \leq 4$, so
$$Q_u \leq \frac{4r\ell_r(u)}{\dout(u)}\sum_{v \in \Nout(u)} \frac{r}{2r+Q_v},$$
as required.
\end{proof}
	
The following lemma gives an upper bound on the total danger of a set of vertices with low extinction probability. Throughout the rest of the section, this lemma will be  the main point of interaction 
between our arguments and
 the definition of the Moran process --- the remainder of our arguments will focus on how vertex dangers and extinction probabilities can be distributed. 	 
	 
\begin{lemma}\label{lem:danger-bound}
Let $G$ be a strongly-connected 
$n$-vertex digraph with $n \geq 2$. Consider the Moran process on $G$ with fitness $r \geq 1$. Let $S \subseteq V(G)$.
Suppose that, for some $\alpha\leq 1/4$, 
every vertex $v\in S$ has $\ell_r(v) \leq \alpha$. Then 
$\sum_{v \in S}Q_v \leq 4r^2\alpha|\Nout(S)|$
and $\sum_{v \in S}Q_v \leq 4r^2n\alpha\ell_r(G)$.
\end{lemma}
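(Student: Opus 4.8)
The plan is to start from the per-vertex bound in Lemma~\ref{lem:fixation-props} and sum it over $S$, then rearrange the resulting double sum by grouping terms according to the out-neighbour. Since $\ell_r(v) \le \alpha \le 1/4$ for every $v \in S$, Lemma~\ref{lem:fixation-props} applies to each such $v$ (strong connectivity together with $n \ge 2$ ensures $\dout(u)\ge 1$, so the statement makes sense), giving
\[
\sum_{u \in S} Q_u \le 4r\alpha \sum_{u \in S} \frac{1}{\dout(u)} \sum_{v \in \Nout(u)} \frac{r}{2r + Q_v}.
\]

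Next I would swap the order of summation. In the double sum on the right, the quantity $r/(2r+Q_v)$ is counted once for each pair $(u,v)$ with $u \in S$ and $v\in\Nout(u)$, i.e. once for each $v\in\Nout(S)$ and each $u\in \Nin(v)\cap S$, with weight $1/\dout(u)$. Hence the inner weight on $r/(2r+Q_v)$ is $\sum_{u\in\Nin(v)\cap S}1/\dout(u)$, which is a partial version of the danger $Q_v$ and so is at most $Q_v$. Therefore
\[
\sum_{u \in S} Q_u \le 4r\alpha \sum_{v \in \Nout(S)} \frac{rQ_v}{2r+Q_v}.
\]
From here the two claimed bounds follow by two different estimates of the summand. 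For the first, I would use $Q_v/(2r+Q_v) < 1$, so that $rQ_v/(2r+Q_v) \le r$, yielding $\sum_{u\in S}Q_u \le 4r^2\alpha|\Nout(S)|$. For the second, I would instead bound $rQ_v/(2r+Q_v) \le rQ_v/(r+Q_v) \le r\ell_r(v)$, where the last step is exactly Observation~\ref{obs:new}; summing over $v\in\Nout(S)\subseteq V$ and recalling $\ell_r(G) = \tfrac1n\sum_{v\in V}\ell_r(v)$ gives $\sum_{u\in S}Q_u \le 4r^2\alpha\sum_{v\in V}\ell_r(v) = 4r^2 n\alpha\ell_r(G)$.

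There is no serious obstacle here: the argument is a one-line application of Lemma~\ref{lem:fixation-props} followed by a change in the order of summation. The only points requiring care are (i) checking that the weight attached to $r/(2r+Q_v)$ after the swap is indeed a sub-sum of $Q_v$, and (ii) choosing the correct final estimate of $rQ_v/(2r+Q_v)$ in each case — the crude bound $r$ for the combinatorial inequality, and the sharper $r\ell_r(v)$ (via Observation~\ref{obs:new}) for the inequality in terms of $\ell_r(G)$.
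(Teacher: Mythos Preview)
Your proof is correct and follows essentially the same route as the paper's: apply Lemma~\ref{lem:fixation-props} to each vertex of $S$, swap the order of summation so that the inner weight on $r/(2r+Q_v)$ becomes $\sum_{u\in\Nin(v)\cap S}1/\dout(u)\le Q_v$, and then finish with the two estimates you describe. The only cosmetic difference is that the paper routes the first bound through $Q_w/(2r+Q_w)\le \ell_r(w)\le 1$ via Observation~\ref{obs:new}, whereas you use $Q_v/(2r+Q_v)<1$ directly; both give the same result.
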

	\begin{proof}
		By applying Lemma~\ref{lem:fixation-props} to all $v \in S$,
		\begin{align*}
			\sum_{v \in S}Q_v 
			&\leq \sum_{v \in S}\frac{4r\alpha}{\dout(v)}\sum_{w \in \Nout(v)}\frac{r}{2r+Q_w} 
			= \sum_{w \in \Nout(S)}\frac{4r^2\alpha}{2r+Q_w}\sum_{v \in \Nin(w) \cap S} \frac{1}{\dout(v)}\\ 
			&\leq \sum_{w \in \Nout(S)}\frac{4r^2\alpha Q_w}{2r+Q_w} 
			\leq 4r^2\alpha\sum_{w \in \Nout(S)}\ell_r(w),
		\end{align*}
where the final inequality follows by  Observation~\ref{obs:new}. 
The first part of the result follows by bounding $\ell_r(w) \leq 1$, and the second part of the result follows since
		\[
			\sum_{w \in \Nout(S)}\ell_r(w) \leq \sum_{w \in V}\ell_r(w) = n\ell_r(G).
		\]
	\end{proof}
	
We can now prove Theorem~\ref{thm:digraph-lower}, which we restate here for convenience. Note that when $r=1$, we have $\ell_r(G) = 1 - 1/n$ for all strongly-connected $n$-vertex digraphs $G$ (see Lemma~1 of \cite{DGMRSS2014:approx}). So from now on, we will take $r > 1$.

\begin{thmdigraphlower}
\statethmdigraphlower{}
\end{thmdigraphlower}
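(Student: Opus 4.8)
The plan is to argue by contradiction: suppose $\ell_r(G)\le 1/(5rn^{1/2})$ and derive a contradiction. For $n$ below an explicit threshold depending on $r$ this is immediate, since $\ell_r(G)=\tfrac1n\sum_v\ell_r(v)\ge\tfrac1n\cdot\tfrac{Q_{v^\ast}}{r+Q_{v^\ast}}\ge\tfrac1{n(r+1)}$ for the danger‑maximising vertex $v^\ast$ — here we use Observation~\ref{obs:new} and $Q_{v^\ast}\ge\tfrac1n\sum_vQ_v=1$, the identity $\sum_vQ_v=\sum_u\dout(u)/\dout(u)=n$ holding because strong connectivity forces $\dout(u)\ge1$ — and $1/(n(r+1))>1/(5rn^{1/2})$ whenever $n<25\,(r/(r+1))^2$. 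So one may fix $r$ and assume $n\ge n_0(r)$ for a suitable $n_0$.

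The core tool is Lemma~\ref{lem:danger-bound}. Applied with $\alpha=1/4$ to $S=\{v:\ell_r(v)\le 1/4\}$ it gives $\sum_{v\in S}Q_v\le r^2n\,\ell_r(G)\le\tfrac15 rn^{1/2}$, while Markov's inequality gives $|V\setminus S|\le 4n\,\ell_r(G)\le\tfrac45 n^{1/2}/r$; since $\sum_vQ_v=n$, almost all of the total danger (namely $n-O(rn^{1/2})$) sits on the few vertices of $V\setminus S$. I would first dispose of the ``spread'' case, where no vertex has danger exceeding $n^{1/2}$. Fixing a threshold $t=\Theta(rn^{1/2})$ and setting $H=\{v:Q_v\ge t\}$, the danger carried by the vertices of $V\setminus S$ with danger below $t$ is at most $t\cdot|V\setminus S|$, which is $o(n)$ for suitable constants, so $\sum_{v\in H}Q_v\ge(1-o(1))n$; dividing by $\max_vQ_v\le n^{1/2}$ yields $|H|\ge(1-o(1))n^{1/2}$, and since every $v\in H$ has $\ell_r(v)\ge t/(r+t)\ge\tfrac12$ we get $\sum_v\ell_r(v)\ge|H|/2\ge(1-o(1))\tfrac12 n^{1/2}>n^{1/2}/(5r)$ for $r>1$, contradicting $\sum_v\ell_r(v)=n\,\ell_r(G)$.

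It remains to treat the ``concentrated'' case, in which some vertex has danger exceeding $n^{1/2}$. If in fact $\Omega(n^{1/2}/r)$ vertices have danger at least $n^{1/2}$, then summing $\ell_r(v)\ge Q_v/(r+Q_v)$ over them already exceeds $n^{1/2}/(5r)$, a contradiction; otherwise only few vertices have danger $\ge n^{1/2}$, but by the concentration estimate above they jointly carry $\Omega(n)$ danger, so some vertex $v^\ast$ has danger $M:=Q_{v^\ast}$ with $M$ large. Such a $v^\ast$ is an efficient ``trap'': by Lemma~\ref{lem:new}, any in‑neighbour $u$ of $v^\ast$ with $\ell_r(u)\le\tfrac12$ satisfies $\ell_r(\{u,v^\ast\})\ge(1-\tfrac{3r}{2r+M})\ell_r(u)$, and substituting this into the one‑step identity $\ell_r(u)=\tfrac{Q_u}{r+Q_u}+\tfrac{r}{r+Q_u}\cdot\tfrac1{\dout(u)}\sum_{w\in\Nout(u)}\ell_r(\{u,w\})$ (keeping only the $w=v^\ast$ term, the rest being nonnegative) gives a lower bound on $\ell_r(u)$ that grows with $M$; when $\dout(u)=1$ it reads $\ell_r(u)\ge Q_u/(Q_u+\tfrac{3r^2}{2r+M})$, which with $Q_u\ge\tfrac1{n-1}$ is $\Omega(M/(r^2n))$. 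Summing such bounds over the in‑neighbours of $v^\ast$ (there are at least $M$ of them, with weights $1/\dout(u)$ summing to $M$) should push $\sum_v\ell_r(v)$ past $n^{1/2}/(5r)$.

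The hard part is this final step, and two points need care. First, the constants: the threshold defining ``high danger'' wants to be large so that $\tfrac{3r}{2r+M}$ is negligible and the trap is effective, but also small so that almost all of the total danger — and almost all of each vertex's out‑weight — lands on high‑danger vertices; balancing this is where the factor $1/5$ in the statement is used up. Second, in‑neighbours $u$ of $v^\ast$ with large out‑degree escape through $v^\ast$ with probability only $1/\dout(u)$, so Lemma~\ref{lem:new} gives them little directly; I would handle these by observing that a vertex with small $\ell_r$ must route most of its out‑weight away from the high‑danger set, and a second application of the danger‑concentration bound of Lemma~\ref{lem:danger-bound} shows too many vertices cannot do this simultaneously. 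Choosing $v^\ast$ to maximise danger (so the trap is as strong as the budget $\sum_vQ_v=n$ permits) and interpolating between the regimes $M=\Theta(n^{1/2})$ and $M=\Theta(n)$ should then close the argument.
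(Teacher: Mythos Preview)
Your approach is far more complicated than needed, and the ``concentrated'' case is genuinely incomplete: you yourself write that closing it \emph{should} work after ``balancing constants'' and ``interpolating between regimes'', but no concrete argument is given, and the trap computation via Lemma~\ref{lem:new} only controls one out-neighbour at a time, so turning it into a bound $\sum_v\ell_r(v)>n^{1/2}/(5r)$ with the stated constant is not at all automatic.

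The paper's proof is essentially four lines, and the reason is a choice you made differently at the very first step. You apply Lemma~\ref{lem:danger-bound} with $\alpha=1/4$, which yields only $\sum_{v\in S}Q_v\le r^2n\,\ell_r(G)$, linear in $\ell_r(G)$. The paper instead takes $A=\{v:\ell_r(v)\le 2\ell_r(G)\}$ and applies Lemma~\ref{lem:danger-bound} with $\alpha=2\ell_r(G)$, obtaining
\[
\sum_{v\in A}Q_v\le 8r^2n\,\ell_r(G)^2,
\]
which is \emph{quadratic} in $\ell_r(G)$. This single change removes the need for any case analysis, any trap argument, and the identity $\sum_vQ_v=n$: one simply combines it with the trivial bound $Q_v\ge 1/(n-1)>1/n$ (each vertex has an in-neighbour of out-degree at most $n-1$) and $|A|>n/2$ (Markov) to get
\[
\tfrac12<\sum_{v\in A}Q_v\le 8r^2n\,\ell_r(G)^2,
\]
hence $\ell_r(G)>1/(4rn^{1/2})$. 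All of your machinery — the spread/concentrated split, the high-danger set $H$, the trap vertex $v^\ast$ — is an attempt to recover the missing factor of $\ell_r(G)$ that a sharper choice of $\alpha$ gives for free.
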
 
\begin{proof}
Let $V$ be the vertex set of~$G$. 
If $n=2$, then $\ell_r(G) = 1/(1+r) \geq 1/(5r)$ and we are done. If $n \geq 3$ and $\ell_r(G) \geq 1/8$, then likewise we are done. 
Therefore, suppose $n \geq 3$ and $\ell_r(G) < 1/8$. Note that $Q_v \geq 1/n$ for all $v \in V$. Let $A = \{v \in V \mid \ell_r(v) \leq 2\ell_r(G)\}$, and note that $\ell_r(G) > (|V \setminus A|/n)\cdot 2\ell_r(G)$ and hence
$|A| > n/2$. Applying Lemma~\ref{lem:danger-bound} to $A$ with $\alpha = 2\ell_r(G) \leq 1/4$ yields
\[\frac{1}{2} < \sum_{v \in A}Q_v \leq 8r^2n\ell_r(G)^2,\]
from which the result follows.
\end{proof}

In the proof of  Theorem~\ref{thm:digraph-lower}, we 
 used the fact that, 
in an $n$-vertex digraph, 
 every vertex~$v$ with ``low'' extinction probability has $Q_v \geq 1/n$. For undirected graphs, where we want to prove a stronger result, this bound is too loose. Instead we must 
account for the vertices  with low extinction probability  that have high danger. 
 We next show that any undirected graph with low extinction probability must contain a set of vertices with both high total degree and high minimum degree.
We will use this to prove  Theorem~\ref{thm:sparseUB} and Theorem~\ref{thm:super-tight-upper}.

\begin{lemma}\label{lem:high-deg-verts}
Let $r>1$. Consider any connected $n$-vertex graph~$G=(V,E)$ with $n\geq 2$ 
and  $\ell_r(G) \leq 1/8$. Then there exists 
a non-empty subset $B$ of $V$ such that 
$\sum_{v \in B}d(v) \geq n/(144 r^2\ell_r(G))$ and, 
for all $v \in B$, $d(v) \geq 1/(32r^2\ell_r(G)^2)$.
\end{lemma}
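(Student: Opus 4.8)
The plan is to build $B$ as (most of) the neighbourhood of a large set of low-danger vertices: Lemma~\ref{lem:danger-bound} controls the dangers, the bound $Q_v\le 1/D$ forces every neighbour of such a vertex to have degree $\ge D$, and a harmonic-mean (Cauchy--Schwarz) estimate converts "low danger of $v$" into "large total degree among the neighbours of $v$".

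Write $\ell=\ell_r(G)$ and $D=1/(32r^2\ell^2)$. First I would note that by Theorem~\ref{thm:digraph-lower} (which applies since a connected graph is a strongly-connected digraph), $\ell>1/(5r\sqrt n)$, hence $n>1/(25r^2\ell^2)$ and in particular $D<n$, so a vertex of degree $\ge D$ can exist (the finitely many graphs with $n\le 5$ being checked directly). Let $A=\{v\in V\mid \ell_r(v)\le 2\ell\}$. Since $\sum_v\ell_r(v)=n\ell$ and each $v\notin A$ contributes more than $2\ell$, we get $|V\setminus A|<n/2$, i.e.\ $|A|>n/2$. As $2\ell\le 1/4$, Lemma~\ref{lem:danger-bound} with $S=A$, $\alpha=2\ell$ gives $\sum_{v\in A}Q_v\le 8r^2n\ell^2$, so by Markov's inequality at most $n/4$ vertices of $A$ have $Q_v>32r^2\ell^2$; set $A''=\{v\in A\mid Q_v\le 32r^2\ell^2\}$, so $|A''|>n/4$.

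Now take $B:=N(A'')$. For $v\in A''$ and $u\in N(v)$, the term $1/d(u)$ occurs in $Q_v\le 1/D$, so $d(u)\ge D$; thus every vertex of $B$ has degree $\ge 1/(32r^2\ell^2)$, and $B\neq\emptyset$ since $A''\neq\emptyset$ and $G$ is connected with $n\ge 2$. For the total-degree bound, Cauchy--Schwarz gives $\bigl(\sum_{u\in N(v)}d(u)\bigr)Q_v\ge d(v)^2\ge 1$ for each $v\in A''$, so $\sum_{u\in N(v)}d(u)\ge 1/Q_v\ge D$. Summing over $A''$ and using that $|N(u)\cap A''|=0$ for $u\notin B$,
\[
\sum_{u\in B}d(u)\,|N(u)\cap A''| \;=\; \sum_{v\in A''}\sum_{u\in N(v)}d(u)\;\ge\; |A''|\,D \;>\; \frac{n}{128r^2\ell^2}.
\]
If every vertex $u$ has $|N(u)\cap A''|\le 9/(8\ell)$, then $n/(128r^2\ell^2)<(9/(8\ell))\sum_{u\in B}d(u)$, which rearranges to $\sum_{u\in B}d(u)>n/(144r^2\ell)$, as required.

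The main obstacle is the complementary "hub" case, where some $u^*$ has $|N(u^*)\cap A''|>9/(8\ell)$. Here $u^*$ itself has large degree: each $v\in N(u^*)\cap A''$ has $Q_v\ge 1/d(u^*)$ and $\sum_{v\in A''}Q_v\le 8r^2n\ell^2$, so $d(u^*)\ge 9/(64r^2n\ell^3)$; a short computation shows $d(u^*)\ge\max\{D,\,n/(144r^2\ell)\}$ whenever $n\ell\le 4.5$, so $B=\{u^*\}$ works in that regime. When $n\ell>4.5$ this single-vertex bound is too weak, and I would close the gap by a further split on the maximum degree $\Delta$: if $\Delta\ge n/(144r^2\ell)$ then a maximum-degree vertex alone works (and $\Delta\ge D$ is automatic here), while if $\Delta<n/(144r^2\ell)$ one re-runs the displayed estimate using the extra bound $|N(u)\cap A''|\le d(u)\le\Delta$, or instead iteratively removes the over-represented hubs from $A''$, each removed hub contributing its provably large degree to $B$. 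Pinning this case down with clean constants is the delicate part; the rest is routine.
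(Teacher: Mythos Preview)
Your setup coincides with the paper: define $A=\{v:\ell_r(v)\le 2\ell\}$, then $A'=\{v\in A:Q_v<32r^2\ell^2\}$ with $|A'|>n/4$, take $B=N(A')$, and read off the minimum-degree bound from $1/d(u)\le Q_v$. The divergence is entirely in how you obtain $\sum_{v\in B}d(v)\ge n/(144r^2\ell)$.

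Your Cauchy--Schwarz route leaves a genuine gap, not just a constant to tune. In the sub-case $n\ell>4.5$ and $\Delta<n/(144r^2\ell)$, bounding $|N(u)\cap A''|\le\Delta$ in the displayed inequality yields only $\sum_{u\in B}d(u)>9/(8\ell)$, and $9/(8\ell)\ge n/(144r^2\ell)$ would force $n\le 162r^2$, so this closes nothing for large $n$. The ``iterative hub removal'' sketch has the same problem: each removed hub has degree at least $D$, and you remove at most $O(n\ell)$ of them, giving total degree $O(n\ell\cdot D)=O(n/\ell)$ with no matching \emph{lower} bound of the right order. The underlying issue is that the inequality $\sum_{u\in B}d(u)\,|N(u)\cap A''|>nD/4$ can be saturated by a few very high-degree hubs each adjacent to $\Theta(n)$ vertices of $A''$, and nothing in your argument rules this out when $\ell$ is small.

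The paper avoids this by not using Cauchy--Schwarz at all. It partitions $A'$ into geometric buckets $A_i=\{v\in A':Q_i\le Q_v<Q_{i+1}\}$ with $Q_i=3^{i-1}/n$, and applies the \emph{first} conclusion of Lemma~\ref{lem:danger-bound} (the bound $\sum_{v\in S}Q_v\le 4r^2\alpha|\Nout(S)|$, which you never invoke) to each $A_i$, yielding $|B_i|\ge Q_i|A_i|/(8r^2\ell)$ where $B_i=N(A_i)$. Since every $u\in B_i$ has $d(u)>1/Q_{i+1}$, one gets a per-bucket degree contribution of order $|A_i|/(r^2\ell)$; a geometric-series accounting for overlaps among the $B_i$ then gives $\sum_{u\in B}d(u)\ge n/(144r^2\ell)$. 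The bucketing is what decouples ``how many neighbours'' from ``how large their degrees are'', which is exactly the coupling your hub case cannot break.
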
 
		
\begin{proof} 
Let $A = \{v \in V \mid \ell_r(v) \leq 2\ell_r(G)\}$,
$A' = \{v \in A \mid Q_v <  32r^2\ell_r(G)^2\}$
and $B = N(A')$.

We first show the third claim in the statement of the lemma, that
$d(v) \geq 1/(32r^2\ell_r(G)^2)$ for all $v\in B$.
This claim follows from the fact that every $v\in B$ is adjacent to some $w\in A'$,
so $1/d(v) \leq Q_w < 32r^2\ell_r(G)^2$.

We next show that $B$ is non-empty. Since $G$ is connected and $n>1$, this follows from the fact that $A'$ is non-empty.
Instead of showing directly that $A'$ is non-empty, we will show the stronger
claim that $|A'| \geq n/4$ --- we will use this later. 
As in the proof of Theorem~\ref{thm:digraph-lower}, note that   $|A|> n/2$. 
Next, apply
Lemma~\ref{lem:danger-bound}
with $S=A$ and $\alpha = 2\ell_r(G)  \leq 1/4$.
This shows that $\sum_{v\in A} Q_v \leq 8 r^2 n\ell_r(G)^2$. Then,
from the definition of $A'$,
$$32r^2\ell_r(G)^2|A\setminus A'| \leq \sum_{v \in A\setminus A'} Q_v
\leq \sum_{v \in A} Q_v \leq 8r^2n\ell_r(G)^2,$$
so $|A\setminus A'| \leq n/4$ and
$|A'| \geq |A| - |A\setminus A'| \geq n/2 -  n/4 = n/4$. 

In the rest of the proof, the goal is to show the first
claim in the statement of the lemma, namely 
\begin{equation}
\label{eq:stateit}
\sum_{v \in B}d(v) \geq n/(144r^2\ell_r(G)).
\end{equation}
To do this, we partition $A'$.
For every positive integer~$i$, let
$Q_i = 3^{i-1}/n$ and let $A_i = \{v \in A' \mid Q_i \leq Q_v < Q_{i+1}\}$. 
Every vertex $v\in A'$ has $Q_v \geq Q_1 = 1/n$ so $A'$ is the union of the disjoint sets $A_1, A_2,\ldots$.

For every positive integer~$i$, let $B_i = N(A_i)$.
It is clear that $B= \bigcup_{i\geq 1} B_i$,  but the sets $B_i$ may not be disjoint.
 Since every vertex $v \in N(A_i)$ is joined to some vertex $w \in A_i$, we have $1/d(v) \leq Q_w < Q_{i+1}$ and hence 
\begin{equation}\label{eqn:delta-bound}
\textnormal{for all }i  \geq 1,\ \textnormal{for all }v \in B_i,\ d(v) > 1/Q_{i+1}.
\end{equation}
For $v\in B$, define $ \phi(v)$ to be the smallest $i$ such that $v \in B_i$.
Then, by~\eqref{eqn:delta-bound},
\begin{equation}\label{eq:subterms}
\sum_{v\in B} d(v) \geq \sum_{v\in B}\frac{1}{Q_{ \phi(v)+1}}. \end{equation}
For each $v\in B$ we can use the definition of $Q_i$ to obtain the following.
\begin{equation}\label{eq:nov1} \sum_{i >\phi(v) } \frac{1}{Q_{ i+1}}
= \frac{1}{Q_{\phi(v)+1}}	\sum_{j\geq  1 } \frac{1}{3^{j}}	
= 	\frac{1}{2 Q_{\phi(v)+1} }.\end{equation}
For each $v\in B$ we can  
omit indices $i$ for which $v\notin B_i$   to obtain the following.
\begin{equation}\label{eq:nov2}
 \sum_{i >\phi(v) } \frac{1}{Q_{ i+1}} \geq 
\sum_{i> \phi(v) : v \in B_i}  \frac{1}{Q_{ i+1}}	= 
\left(\sum_{i : v \in B_i } \frac{1}{Q_{ i+1}} \right) - 
 \frac{1}{Q_{\phi(v)+1} }.\end{equation}
Putting together equations~\eqref{eq:nov1} and~\eqref{eq:nov2}, we get
$$\frac{1}{Q_{\phi(v)+1}} \geq \left(\frac{2}{3}\right) \sum_{i: v \in B_i}  \frac{1}{Q_{ i+1}}.$$
Substituting this into~\eqref{eq:subterms}, we get
\begin{align}
\label{eq:sum}
\sum_{v\in B} d(v) \geq
\left(\frac{2}{3}\right) \sum_{v\in B}\sum_{i: v \in B_i} 
\frac{1}{Q_{i+1}}.\end{align}

We now apply Lemma~\ref{lem:danger-bound} 
with $S	= A_i$	and $\alpha = 2\ell_r(G) \leq 1/4$
to show that
$\sum_{v \in A_i} Q_v \leq 
4 r^2 \alpha |N(A_i)| =
8 r^2 \ell_r(G)|B_i|$.
Since, by the definition of~$A_i$, we have $Q_i \,|A_i| \leq 		\sum_{v \in A_i} Q_v$,
we conclude that 
$|B_i| \geq Q_i\,|A_i|/(8r^2\ell_r(G))$.

Equation~\eqref{eq:stateit} now follows from~\eqref{eq:sum} together with the bound
 $$\sum_{i \geq 1} \sum_{v\in B_i} \frac{1}{Q_{i+1}} \geq
   \sum_{i \geq 1}\frac{Q_i|A_i|}{8r^2Q_{i+1}\ell_r(G)} = 
   \frac{1}{24r^2\ell_r(G)}\sum_{i \geq 1}|A_i| = \frac{|A'|}
   {24r^2\ell_r(G)} \geq \frac{n}{96r^2\ell_r(G)}.
$$ 
\end{proof}

We now use
Lemma~\ref{lem:high-deg-verts}  to prove the remaining theorems.

\begin{thmsparseUB}
\statethmsparseUB{}
\end{thmsparseUB}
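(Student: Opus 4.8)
The plan is to deduce this almost immediately from Lemma~\ref{lem:high-deg-verts}, which is the only place in the argument where the dynamics of the Moran process actually enter. First I would dispose of the trivial regime. Since $G$ is connected with $n\geq 2$ vertices, $m \geq n-1 \geq n/2$, and as $r>1$ this gives $n/(288 r^2 m) \leq 1/(144 r^2) < 1/8$. Hence if $\ell_r(G) > 1/8$ the claimed bound holds trivially, and we may assume $\ell_r(G) \leq 1/8$ for the rest of the argument.

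In that case Lemma~\ref{lem:high-deg-verts} applies and produces a non-empty set $B \subseteq V$ with $\sum_{v \in B} d(v) \geq n/(144 r^2 \ell_r(G))$. (Only the first conclusion of that lemma, the total-degree bound, is needed here; the minimum-degree statement will be used instead in the proof of Theorem~\ref{thm:super-tight-upper}.) On the other hand, $B \subseteq V$ and the degrees of $G$ sum to $2m$, so $\sum_{v \in B} d(v) \leq 2m$. Combining the two inequalities gives $2m \geq n/(144 r^2 \ell_r(G))$, and rearranging yields $\ell_r(G) \geq n/(288 r^2 m)$, as required.

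There is essentially no obstacle at this stage: all the difficulty has already been absorbed into Lemma~\ref{lem:high-deg-verts} (and, further upstream, into Lemma~\ref{lem:danger-bound}, Lemma~\ref{lem:fixation-props} and Observation~\ref{obs:new}). The only points requiring a little care are that the connectivity bound $m \geq n-1$ is what makes the easy regime go through (including the boundary case $n=2$, $m\geq 1$), and that the hypothesis $\ell_r(G) \leq 1/8$ demanded by Lemma~\ref{lem:high-deg-verts} is precisely what remains once the trivial case has been removed.
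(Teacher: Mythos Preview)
Your proof is correct and essentially identical to the paper's own argument: both handle the easy case $\ell_r(G)\geq 1/8$ via the connectivity bound $m\geq n-1\geq n/2$, then apply Lemma~\ref{lem:high-deg-verts} and compare $\sum_{v\in B}d(v)$ with $2m$. The paper likewise uses only the total-degree conclusion of that lemma here.
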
 
\begin{proof}
Let $G=(V,E)$. Since $G$ is connected and $n \geq 2$, we have $m \geq n-1 \ge n/2$. Thus if $\ell_r(G) \geq 1/8 > n/(288 r^2m)$, then the result holds. If not, we may apply Lemma~\ref{lem:high-deg-verts} to $G$ to obtain a subset $B$ of $V$. We then have
\[
|E| \geq \frac{1}{2}\sum_{v \in B}d(v) \geq \frac{n}{288 r^2\ell_r(G)}.
\]
The result follows.\end{proof}

\begin{thmtightupper}
\statethmtightupper{} 
\end{thmtightupper}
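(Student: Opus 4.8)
The plan is to argue by contradiction: assume $\ell := \ell_r(G) \le 1/(42 r^{4/3} n^{1/3})$. Since $42 r^{4/3} n^{1/3} > 8$ for every $r>1$ and $n\ge 2$, we certainly have $\ell \le 1/8$, so Lemma~\ref{lem:high-deg-verts} applies; moreover the set $B$ it produces has every vertex of degree at least $1/(32r^2\ell^2) > 55\, r^{2/3} n^{2/3}$, which exceeds $n-1$ once $n$ falls below a constant multiple of $r^{2}$, so I may assume $n$ is large in terms of $r$. I then set up the usual danger framework: let $A = \{v \in V(G) : \ell_r(v) \le 2\ell\}$, so $|A| > n/2$ because $\ell$ is the average of the $\ell_r(v)$; by Observation~\ref{obs:new} every $v\in A$ has $Q_v \le 2r\ell/(1-2\ell) \le 4r\ell$, hence $d(v) \le 4r\ell(n-1) < n^{2/3}$; and by Lemma~\ref{lem:danger-bound} (with $S=A$ and $\alpha = 2\ell$) one gets $\sum_{v\in A} Q_v \le 8 r^2 n\ell^2$.

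The easy half is the case where most vertices of $A$ have only moderate‑degree neighbours. Put $\theta = 55\, r^{2/3} n^{2/3}$ and $A^{\circ} = \{v\in A : d(u)\le\theta\text{ for all }u\in N(v)\}$. If $|A^{\circ}|\ge n/4$, then $Q_v \ge d(v)/\theta \ge 1/\theta$ for $v\in A^{\circ}$ gives $\sum_{v\in A}Q_v \ge |A^{\circ}|/\theta \ge n/(4\theta)$, so together with the previous paragraph $n/(4\theta)\le 8r^2 n\ell^2$, i.e.\ $\ell^2 \ge 1/(32\theta r^2) = 1/(1760\, r^{8/3} n^{2/3})$, and since $\sqrt{1760}<42$ this forces $\ell > 1/(42 r^{4/3} n^{1/3})$ — contradiction. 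Hence $|A\setminus A^{\circ}| > n/4$: at least $n/4$ vertices of $A$ have a neighbour of degree exceeding $\theta = \Theta(r^{2/3}n^{2/3})$. Writing $H$ for the set of such high‑degree vertices, we have $A\cap H = \emptyset$ and more than $n/4$ edges running from $A$ into $H$.

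The hard half — and the main obstacle — is to rule out this ``heavy'' case. The difficulty is that these $\Omega(n)$ edges from $A$ into $H$ contribute almost nothing to $\sum_{v\in A}Q_v$ (each contributes less than $1/\theta$), so the first‑order danger bound is blind to them; one must instead use the pair bound of Lemma~\ref{lem:new}, in the strengthened form obtained by rearranging Lemma~\ref{lem:fixation-props},
\[
\ell_r(v) \;\ge\; \frac{Q_v\, d(v)}{4 r^2 \sum_{u\in N(v)} 1/(2r+Q_u)} \qquad (v\in A),
\]
which shows that a heavy vertex $u$ cannot be adjacent to too many \emph{low}-degree vertices of $A$ without pushing their extinction probabilities above $2\ell$: it limits the number of degree-$O(1)$ neighbours of $u$ lying in $A$ to $O(r^2\ell\, d(u))$, the analogue of the ``$\lceil r\beta^{1/2} b(k)\rceil$ leaves per centre'' built into the incubators of Definition~\ref{def:incubator}. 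Feeding this into the set $B$ of heavy vertices from Lemma~\ref{lem:high-deg-verts} — which has $\sum_{v\in B} d(v) = \Omega(n/(r^2\ell))$ and minimum degree $\Omega(1/(r^2\ell^2))$ — together with a counting argument showing the edges leaving $B$ concentrate on a common ``reservoir'' whose size is $\Omega(1/(r^2\ell)^2)$, one deduces that the number of $A$-vertices, and hence $n$, is $\Omega(\mathrm{poly}(r)^{-1}\ell^{-3})$, contradicting $n \le 1/(42^3 r^4\ell^3)$. The genuine subtlety here — and the reason the exponent is $1/3$ rather than the $1/2$ that Observation~\ref{obs:new} and Lemma~\ref{lem:danger-bound} alone yield — is that this per‑vertex refinement must scale like $\sqrt{d(u)}$ rather than $d(u)$, and the constants in the reservoir‑counting step must be tracked carefully enough to collide with the lower bound $m = \Omega(n^{4/3}/r^{2/3})$ provided by Theorem~\ref{thm:sparseUB}.
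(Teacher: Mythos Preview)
Your easy half is fine, but the hard half is not a proof --- it is a sketch whose central claim does not follow from the lemma you cite. You assert that the rearranged form of Lemma~\ref{lem:fixation-props} ``limits the number of degree-$O(1)$ neighbours of $u$ lying in $A$ to $O(r^2\ell\, d(u))$'', but applying that inequality to a degree-$1$ vertex $v\in A$ whose unique neighbour is $u$ gives only $1/d(u) \le 8r^2\ell/(2r+Q_u)$, a lower bound on $d(u)$ that is already implied by $u\in B$; it says nothing about how many such $v$ there can be. The subsequent ``reservoir'' counting is never specified, and your own remarks about needing a $\sqrt{d(u)}$ scaling and about invoking Theorem~\ref{thm:sparseUB} indicate you have not actually found a working route here.

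The paper's argument avoids the case split entirely and works directly with the set $B$ from Lemma~\ref{lem:high-deg-verts}. It partitions $B$ into dyadic layers $B_i = \{v\in B : d_i \le d(v) < 4d_i\}$ with $d_i = 4^{i-1}/(32r^2\ell^2)$, and for each layer chooses a threshold $\ell_i$ proportional to $d_i|B_i|/(4^{i-1}D)$, where $D = \sum_i d_i|B_i|$. The crucial step is to apply Lemma~\ref{lem:danger-bound} not to $A$ but to $X_i = \{v : \ell_r(v)\le \ell_i\}$: every edge from $B_i$ into $X_i$ contributes at least $1/d_{i+1}$ to $\sum_{v\in X_i}Q_v$, so the danger bound forces $B_i$ to send at least half of its incident edges into $C_i = N(B_i)\setminus X_i$, whence $|C_i| \ge d_i/2$. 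After disjointifying the $C_i$ one obtains
\[
\ell_r(G) \ \ge\ \frac{1}{n}\sum_i \frac{d_i}{4}\,\ell_i \ =\ \Theta\!\left(\frac{1}{r^4 n\,\ell_r(G)^2}\right),
\]
which rearranges to the claimed $\ell_r(G) = \Omega(r^{-4/3}n^{-1/3})$. The idea you were missing is that the heavy vertices in $B$ should be used as \emph{sources} of edges whose \emph{targets} are thereby forced to have large $\ell_r$, rather than trying to cap the number of $A$-vertices that can attach to each heavy vertex.
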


\begin{proof}
Let $G=(V,E)$ be a connected $n$-vertex graph with $n \geq 2$. If $\ell_r(G) > 1/(15r)$ then we are done.
So suppose for the rest of the proof that $\ell_r(G) \leq 1/(15r)$.

By Lemma~\ref{lem:high-deg-verts}, factoring some of the constants to make the arithmetic easier below,
 there exists a non-empty subset $B$ of $V$ such that $\sum_{v \in B}d(v) \geq n/(2^4 3^2 r^2\ell_r(G))$ and, for all $v \in B$, $d(v) \geq 1/(2^5 r^2\ell_r(G)^2)$. For each positive integer $i$, let $d_i = 4^{i-1}/(2^5r^2\ell_r(G)^2)$ and let $B_i = \{v \in B \mid d_i \leq d(v) < d_{i+1}\}$. 
Every vertex in $B$ has degree at least $d_1$.  
Let $\mathcal{I} = \{i \geq 1 \mid |B_i| > 0\}$.
Note that the
sets in $\{B_i \mid i\in \mathcal{I}\}$ are 
disjoint and they are
a partition of~$B$.	
	
Let $D = \sum_{i \in \mathcal{I}} d_i|B_i|$. For every positive integer $i$, let $\ell_i = d_i|B_i|/(2^6 3^2r^24^{i-1}D)$.
The goal will be to show that $B_i$ sends most of its edges to vertices $v$ with $\ell_r(v)>\ell_i$, so plenty of these vertices
exist. To do this, let
$X_i = \{v \in V \mid \ell_r(v) \leq \ell_i\}$. We  start by giving an upper bound on the number of edges from $B_i$ to $X_i$. By Lemma~\ref{lem:danger-bound} applied with $S=X_i$ and $\alpha = \ell_i \leq 1/(2^6 3^2r^2) < 1/4$, for all $i \geq 1$ we have
	\begin{equation}\label{eqn:tight-upper-X-works-pre}
		\sum_{v \in X_i} Q_v \leq 4r^2n\ell_i\ell_r(G) = \frac{d_i|B_i|n\ell_r(G)}{2^4 3^2\cdot4^{i-1}D} = \frac{|B_i|n}{2^9 3^2r^2\ell_r(G)D}.
	\end{equation}
	Now, by the definitions of $d_i$, $B_i$ and $B$,
	\begin{equation}\label{eqn:tight-upper-D-bound}
		D = \sum_{i \in \mathcal{I}} d_i|B_i| = \frac{1}{4}\sum_{i \in \mathcal{I}}d_{i+1}|B_i| \ge \frac{1}{4}\sum_{v \in B}d(v)\ge \frac{n}{2^6 3^2r^2\ell_r(G)}.
	\end{equation}
	It follows from~\eqref{eqn:tight-upper-X-works-pre} and~\eqref{eqn:tight-upper-D-bound} that
	\begin{equation}\label{eqn:tight-upper-X-works}
		\sum_{v \in X_i} Q_v \le |B_i|/8.
	\end{equation}
	
	On the other hand, using the definition of $Q_v$ and then the definitions of $B_i$ and $d_i$,
	$$
	\sum_{v \in X_i}Q_v 
	\geq \sum_{v \in X_i} \sum_{w \in B_i \cap N(v)}\frac{1}{d(w)}
	= \sum_{w \in B_i}\frac{|N(w) \cap X_i|}{d(w)}
	\geq  \sum_{w \in B_i}\frac{|N(w) \cap X_i|}{d_{i+1}}
	= \frac{1}{4d_i}\sum_{w \in B_i}|N(w) \cap X_i|.
	$$
	Thus by~\eqref{eqn:tight-upper-X-works}, $\sum_{w \in B_i}|N(w) \cap X_i| \leq d_i|B_i|/2$. Now let $C_i = N(B_i) \setminus X_i$. It follows that
	\begin{equation}\label{eqn:tight-upper-avg-X}
		\sum_{w \in B_i} |N(v) \cap C_i| = \sum_{w \in B_i}d(w) - \sum_{w \in B_i}|N(w) \cap X_i| \geq d_i|B_i| - \frac{d_i|B_i|}{2} = \frac{d_i|B_i|}{2}.
	\end{equation}

Thus, we have succeeded in showing that most of the edges from~$B_i$ go to vertices 
$v\in C_i$, which have $\ell_r(v)> \ell_i$.
For the rest of the proof, we show that the vertices in the sets $C_i$
give a lower bound on $\ell_r(G)$.

	When $i \in \mathcal{I}$, $B_i \ne \emptyset$ and so there is some vertex $v \in B_i$ that sends at least as many edges into $C_i$ as the average over $B_i$. Since the degree of every vertex is an integer, it follows from~\eqref{eqn:tight-upper-avg-X} that $|C_i| \geq \ceil{d_i/2}$. Now, for all $i \in \mathcal{I}$, let $C_i'$ be an arbitrary subset of $C_i$ such that $|C_i'| = \ceil{d_i/2}$. Note that since we have assumed $\ell_r(G) \leq 1/(15r)$, we have $d_1 = 1/(2^5 r^2\ell_r(G)^2) \geq 6$. Thus for all $i \in \mathcal{I}$, $d_i \geq d_1 \geq 6$ and so 
	\begin{equation}\label{eqn:tight-upper-Ci}
		d_i/2 \leq |C_i'| \leq 2d_i/3.
	\end{equation}
	Let $C_i'' = C_i' \setminus \bigcup_{j \in [i-1] \cap \mathcal{I}} C_j'$. Then all sets $C_i''$ are disjoint by construction, and by~\eqref{eqn:tight-upper-Ci} we have
	\[
		|C_i''| \geq \frac{d_i}{2} - \sum_{j \in [i-1] \cap \mathcal{I}} |C_j'| 
		\geq \frac{d_i}{2} - \sum_{j \in [i-1]}\frac{2d_j}{3} 
		= \frac{d_i}{2} - \frac{2}{3}\sum_{j \in [i-1]} \frac{d_i}{4^j}
		\geq d_i\left(\frac{1}{2} - \frac{2}{3}\sum_{j=1}^\infty 4^{-j} \right)
		> \frac{d_i}{4}.
	\]
	Since $C_i'' \subseteq C_i' \subseteq C_i = N(B_i) \setminus X_i$, we have $C_i'' \cap X_i = \emptyset$. So by the definition of $X_i$, for all $v \in C_i''$, we have $\ell_r(v) > \ell_i$. It follows that
	\begin{align*}
		\ell_r(G) &= \frac{1}{n}\sum_{v \in V}\ell_r(v) 
		\geq \frac{1}{n}\sum_{i \in \mathcal{I}}|C_i''|\ell_i
		\geq \frac{1}{n}\sum_{i \in \mathcal{I}}\frac{d_i}{4} \cdot \frac{d_i|B_i|}{2^6 3^2r^24^{i-1}D}\\
		&= \frac{1}{n}\sum_{i \in \mathcal{I}}\frac{4^{i-1}}{2^7 r^2\ell_r(G)^2} \cdot \frac{d_i|B_i|}{2^6 3^2 r^24^{i-1}D}
		= \frac{\sum_{i \in \mathcal{I}}d_i|B_i|}{2^{13}3^2r^4Dn\ell_r(G)^2}
		= \frac{1}{2^{13}3^2r^4n\ell_r(G)^2}.
	\end{align*}
	Rearranging yields $\ell_r(G) \geq 1/(42r^{4/3}n^{1/3})$, as required.
\end{proof}

\section*{Acknowledgements}We thank an anonymous referee of an earlier version for suggesting a 
simplification that    shortened the proof of Lemma~\ref{lem:high-deg-verts}.

\bibliographystyle{plain}
\bibliography{\jobname}

\end{document}